

\documentclass[11pt]{amsart}


 \usepackage[nobysame]{amsrefs}

 \AtBeginDocument{\def\MR#1{}}

\usepackage[english]{babel} 
\usepackage[UKenglish]{datetime}


\usepackage{amscd}
\usepackage{amsfonts}
\usepackage{amsmath}
\usepackage{amssymb}
\usepackage{amsthm}
\usepackage{amsxtra}
\usepackage{cases}
\usepackage{chngcntr}
\usepackage{cite}
\usepackage{color}
\usepackage{graphicx}
\usepackage{latexsym}
\usepackage{bm}
\usepackage{bbold}
\usepackage{mathtools}
\usepackage{microtype}
\usepackage{qsymbols}
\usepackage[Symbolsmallscale]{upgreek} 
\usepackage[active]{srcltx}
\usepackage{tikz}
\usepackage{tikz-cd}
\usepackage{url}
\usepackage{verbatim} 


\usepackage{enumitem}
\usepackage{hyperref}
\usepackage{cleveref}









\usepackage[charter]{mathdesign}



\newcommand{\bbfont}{\mathbb}



\usepackage{ifthen}

\makeatletter
\newcommand{\tfs}[1]
{
\ifthenelse{\equal{\f@shape}{n}}{\ensuremath{\mathrm{#1}}}
        {\ifthenelse{\equal{\f@shape}{sc}}{\ensuremath{\mathrm{#1}}}
                {\ifthenelse{\equal{\f@shape}{it}}{\ensuremath{\mathit{#1}}}
                        {\ifthenelse{\equal{\f@shape}{sl}}{\ensuremath{\mathit{#1}}}{}
                        }
                }
        }
}
\makeatother




\makeatletter
\newcommand{\btfs}[1]
{
\ifthenelse{\equal{\f@shape}{n}}{\ensuremath{\mathrm{#1}}}
        {\ifthenelse{\equal{\f@shape}{sc}}{\ensuremath{\mathrm{#1}}}
                {\ifthenelse{\equal{\f@shape}{it}}{\ensuremath{\mathit{#1}}}
                        {\ifthenelse{\equal{\f@shape}{sl}}{\ensuremath{\mathit{#1}}}{}
                        }
                }
        }
}
\makeatother





\newcommand{\ep}{{\epsilon}}







\newcommand{\CC}{{\bbfont C}}

\newcommand{\NN}{{\bbfont N}}

\newcommand{\RR}{{\bbfont R}}
\newcommand{\TT}{{\bbfont T}}

\newcommand{\ZZ}{{\bbfont Z}}



\newcommand{\upd}{{\mathrm{d}}}





\newcommand{\norm}[1]{{\lVert #1 \rVert}}

\newcommand{\braces}[1]{{\{ #1\}}}

\newcommand{\lrnorm}[1]{{\left\lVert #1 \right\rVert}}
\newcommand{\lrangles}[1]{{\left\langle #1\right\rangle}}
\newcommand{\lrbraces}[1]{{\left\{ #1\right\}}}


\newcommand{\set}[1]{\braces{#1}}

\newcommand{\desset}[1]{\braces{#1}}
\newcommand{\lrdesset}[1]{\lrbraces{#1}}


\newcommand{\di}[1]{\,\upd #1}

\newcommand{\cont}{\tfs{C}}

\newcommand{\bounded}{\tfs{B}}

\newcommand{\Cstar}{\ensuremath{\tfs{C}^\ast}}
\newcommand{\Calgebra}{\Cstar\!-algebra}
\newcommand{\Calgebras}{\Cstar\!-algebras}










\theoremstyle{plain}

\newtheorem{theorem}{Theorem}[section]
\newtheorem{proposition}[theorem]{Proposition}
\newtheorem{lemma}[theorem]{Lemma}
\newtheorem{corollary}[theorem]{Corollary}


\newtheorem*{theorem*}{Theorem}
\newtheorem*{proposition*}{Proposition}
\newtheorem*{lemma*}{Lemma}
\newtheorem*{corollary*}{Corollary}
\newtheorem*{conjecture*}{Conjecture}
\newtheorem*{assumption*}{Assumption}
\newtheorem*{hypothesis*}{Hypothesis}
\newtheorem*{question*}{Question}
\newtheorem*{problem*}{Problem}
\newtheorem*{task*}{Task}
\newtheorem*{addendum*}{Addendum}
\newtheorem*{idea*}{Idea}
\newtheorem*{suggestion*}{Suggestion}
\newtheorem*{context*}{Context}
\newtheorem*{exercise*}{Exercise}

\theoremstyle{definition}

\newtheorem{definition}[theorem]{Definition}

\newtheorem{remark}[theorem]{Remark}

\newtheorem*{definition*}{Definition}
\newtheorem*{example*}{Example}
\newtheorem*{remark*}{Remark}


\setlist[enumerate,1]{label=\textup{(\arabic*)},ref=\arabic*}
\setlist[enumerate,2]{label=\textup{(\alph*)},ref=\arabic{enumi}.\alph*}
\setlist[enumerate,3]{label=\textup{(\roman*)},ref=\arabic{enumi}.\alph{enumii}.\roman*}
\setlist[enumerate,4]{label=\textup{(\Alph*)},ref=\arabic{enumi}.\alph{enumii}.\roman{enumiii}.\Alph*}

\crefname{theorem}{Theorem}{Theorems}
\crefname{proposition}{Proposition}{Propositions}
\crefname{lemma}{Lemma}{Lemmas}
\crefname{corollary}{Corollary}{Corollaries}
\crefname{conjecture}{Conjecture}{Conjectures}
\crefname{definition}{Definition}{Definitions}
\crefname{example}{Example}{Examples}
\crefname{remark}{Remark}{Remarks}
\crefname{assumption}{Assumption}{Assumptions}
\crefname{hypothesis}{Hypothesis}{Hypotheses}
\crefname{question}{Question}{Questions}
\crefname{problem}{Problem}{Problems}
\crefname{task}{Task}{Tasks}
\crefname{addendum}{Addendum}{Addenda}
\crefname{idea}{Idea}{Ideas}
\crefname{suggestion}{Suggestion}{Suggestions}
\crefname{context}{Context}{Contexts}
\crefname{section}{Section}{Sections}
\crefname{subsection}{Section}{Sections}

\crefname{equation}{equation}{equations}
\crefname{enumi}{part}{parts}
\crefname{enumii}{part}{parts}
\crefname{enumiii}{part}{parts}
\crefname{enumiv}{part}{parts}



\numberwithin{equation}{section}

\allowdisplaybreaks 



\newcommand{\e}{\epsilon}
\newcommand{\ee}{\e^\prime}

\newcommand{\pa}{^{\phantom\ast}}
\newcommand{\pki}{^{\!\!\!\!\phantom{k_{i}}}}

\newcommand{\zij}{z_{ij}}
\newcommand{\zzij}{{\overline z}_{ij}}

\newcommand{\gen}{s}
\newcommand{\iso}{s}
\newcommand{\uni}{s}
\newcommand{\Gen}{S}

\newcommand{\Hl}{H_{l,n-l}}

\newcommand{\stntupleGenzeroshort}[1]{\Gen_{\{\zij\}; 0,n}}

\newcommand{\strep}{\pi_{\mathrm s}}
\newcommand{\resstrep}{\rho}

\newcommand{\ssp}{L_l}
\newcommand{\ipssp}[1]{\lrangles{#1}_{\ssp}}
\newcommand{\ipstsp}[1]{\lrangles{#1}_{\phi}}

\newcommand{\UAtwopar}[2]{{\mathcal A}_{\{\zij\};#1,#2}}
\newcommand{\IUAtwopar}[2]{{\mathcal A}^0_{\{\zij\};#1,#2}}
\newcommand{\PAtwopar}[2]{{\mathcal P}_{\{\zij\};#1,#2}}
\newcommand{\IPAtwopar}[2]{{\mathcal P}^0_{\{\zij\};#1,#2}}

\newcommand{\UAl}{\UAtwopar{l}{n-l}}
\newcommand{\IUAl}{\IUAtwopar{l}{n-l}}
\newcommand{\PAl}{\PAtwopar{l}{n-l}}
\newcommand{\IPAl}{\IPAtwopar{l}{n-l}}








\newcommand{\z}{z}
\newcommand{\zz}{\overline{z}}


\begin{document}


\title[Universal $\boldsymbol{{\mathrm C^\ast}}$-algebras]{Universal $\boldsymbol{{\mathrm C^\ast}}$-algebras generated by doubly non-commuting isometries}

\author[Marcel de Jeu]{Marcel de Jeu} \address{Marcel de Jeu, Mathematical Institute, Leiden University, P.O.\ Box 9512, 2300 RA Leiden, the Netherlands; and Department of Mathematics and Applied Mathematics, University of Pretoria, Cor\-ner of Lynnwood Road and Roper Street, Hatfield 0083, Pretoria, South Africa }
\email{mdejeu@math.leidenuniv.nl}

\author[Alexey Kuzmin]{Alexey Kuzmin}
\address{Alexey Kuzmin, Chalmers University of Technology and University of Gothenburg,	Department of Mathematical Sciences, SE-412 96, Gothenburg, Sweden}
\email{vagnard.k@gmail.com}

\author[Paulo R.\ Pinto]{Paulo R.\ Pinto}
\address{Paulo R.\ Pinto, Centro de An\'alise Matem\'atica, Geometria e Sistemas Din\^amicos, Departamento de Matem\'atica, Instituto Superior T\'ecnico, Universidade de Lisboa,
Av.\ Rovisco Pais 1, 1049-001 Lisbon, Portugal}
\email{ppinto@math.tecnico.ulisboa.pt}

\keywords{Doubly non-commuting isometries, universal \Calgebra, non-commutative torus, K-theory}

\subjclass[2010]{Primary 46L35; Secondary 46K10, 46L65, 46L80}



\begin{abstract}
We give an explicit injective representation of the universal $\mathrm{C}^\ast$-alge\-bra that is generated by doubly non-commuting isometries. This injectivity allows us to prove that such universal algebras embed naturally into each other and also, when combined with Rieffel's theory of deformation, to show that they are nuclear and to compute their K-theory.
\end{abstract}

\maketitle


\section{Introduction and overview}\label{sec:introduction_and_overview}


\noindent 
In this paper, we study the universal \Calgebras\ in the following definition.

\begin{definition}\label{def:universal_algebra}
	Suppose that $n\geq 1$ is an integer and that, for all $i\neq j$ with $1\leq i,j\leq n$, $\zij\in\TT$ are given such that $\zij=\zzij$. Take $l$ such that $0\leq l\leq n$. Then we let $\UAl$ denote the universal unital $\Cstar$-algebra that is generated by
	elements $\gen_1,\dotsc,\gen_n$ such that
	\begin{equation}\label{eq:relations_in_universal_algebra}
		\gen_i^\ast \gen_j\pa=\zzij\pa \gen_j\pa \gen_i^\ast
	\end{equation}
	and
	\begin{equation}\label{eq:automatic_relations_in_universal_algebra}
		\gen_i\pa \gen_j\pa=\zij\pa \gen_j \gen_i
	\end{equation}
	for all $i\neq j$ with $1\leq i,j\leq n$,
	\begin{equation}\label{eq:isometries_in_universal_algebra}
		\gen_i^\ast \gen_i\pa=1
	\end{equation}
	for $i=1,\dotsc,l$, and
	\begin{equation}\label{eq:unitaries_in_universal_algebra}
		\gen_i^\ast \gen_i\pa=\gen_i\pa \gen_i^\ast=1
	\end{equation}
	for $i=l+1,\dotsc,n$.
\end{definition}

In view of \cref{eq:relations_in_universal_algebra,eq:automatic_relations_in_universal_algebra,eq:isometries_in_universal_algebra}, we say that these algebras are generated by \emph{doubly non-commuting isometries}.\footnote{It is known (see \cite[p.2671]{jorgensen_proskurin_samoilenko:2005}) that \cref{eq:automatic_relations_in_universal_algebra} follows from \cref{eq:relations_in_universal_algebra} and the fact that the $s_i$ are isometries, but we have included it nevertheless because it \emph{is} relevant later on when studying the universal unital involutive algebra that underlies $\UAl$.}

In \cite{de_jeu_pinto:2020}, we investigated the unital representations of $\UAl$. After establishing a simultaneous Wold decomposition for the images of the $s_i$, these (irreducible) representations can be classified up to unitary equivalence in terms of the (irreducible) representations of the various non-commutative tori that correspond naturally to the data in \cref{def:universal_algebra}. In \cite{popescu:2020} and \cite{rakshit_sarkar_suryawanshi:2022}, similar results are obtained for two more general classes of universal \Calgebras.

It is not obvious that there exist unital representations of $\UAl$ other than the zero representation on the zero space. The Wold decomposition in \cite{de_jeu_pinto:2020}, however, helps one to find the following non-trivial example, which is taken from \cite[Section~4]{de_jeu_pinto:2020}. We write  $\NN_0\coloneqq\{0,1,2,\dotsc\}$.

Set
\begin{equation}\label{eq:hilbert_space_definition}
	\Hl\coloneqq\ell^2(\NN_0)^{\otimes l}\otimes\ell^2(\ZZ)^{\otimes(n-l)},
\end{equation}
which is to be read as  $\ell^2(\ZZ)^{\otimes n}$ if an $n$-tuple with only unitaries is to be defined (the case where $l=0$), and as $\ell^2(\NN_0)^{\otimes n}$ if an $n$-tuple with only pure isometries is to be defined (the case where $l=n$). We let $\{\ep_k: k\geq 0\}$ denote the canonical basis of $\ell^2(\NN_0)$, and let $\{\e_k: k\in\ZZ\}$ denote the canonical basis of $\ell^2(\ZZ)$, so that  $\{\ep_{k_1}\otimes\dotsm\otimes \ep_{k_l}\otimes \ep_{k_{l+1}}\otimes\dotsm\otimes \ep_{k_n} : k_1,\dotsc, k_l\geq 0; \, k_{l+1},\dotsc,k_n\in\ZZ\}$ is a basis of $\Hl$. For typographical reasons, we shall often write $\ep_{k_1,\dotsc,k_n}$ for $\ep_{k_1}\otimes\dotsc\otimes \ep_{k_n}$,  $\ep_{k_1,\dotsc,k_{i-1},k_{i},k_{i+1}\dotsc,k_n}$ for $\ep_{k_1}\otimes\dotsm\otimes \ep_{k_{i-1}}\otimes \ep_{k_i}\otimes \ep_{k_{i+1}}\otimes\dotsm\otimes \ep_{k_n}$, etc. 

For $i=1,\dotsc,n$, define $\Gen_i\in \bounded(\Hl)$ by setting

\begin{equation}\label{eq:general_isometries}
	\Gen_i\pki \e_{k_1,\dotsc,k_{i-1}, k_{i},k_{i+1},\dotsc,k_n}\pki \coloneqq \z_{i,1}^{k_1}\dotsm\z_{i,i-1}^{k_{i-1}} \e_{k_1,\dotsc,k_{i-1},k_i+1,k_{i+1},\dotsc, k_n}\pki
\end{equation}
for all $k_1,\dotsc,k_l\geq 0$ and all $k_{l+1},\dotsc,k_n\in\ZZ$, where empty products that occur are to be read as 1. It is then straightforward to verify (see \cite[Section~4]{de_jeu_pinto:2020} for details) that the $\Gen_i$ are isometries on $\Hl$ satisfying \cref{eq:relations_in_universal_algebra,eq:automatic_relations_in_universal_algebra}, that $\Gen_1,\dotsc,\Gen_l$ are pure isometries, and that $\Gen_{l+1},\dotsc,\Gen_n$ are unitaries. The resulting unique unital representation
\[
\strep:\UAl\to\bounded (\Hl)
\]
such that
\[
\strep(\gen_i)=\Gen_i
\]
for $i=1,\dotsc,n$ is called the \emph{standard representation of $\UAl$}.

\begin{remark}
	For use in the remainder of this paper, we record from \cite[Section~4]{de_jeu_pinto:2020} that, for  $k_1,\dotsc,k_{l}\geq 0$ and $k_{l+1},\dotsc,k_n\in\ZZ$,
	\begin{equation}\label{eq:adjoint_of_pure_isometry}
		\Gen_i^\ast \e_{k_1,\dotsc,k_n} =
		\begin{cases}
			\zz_{i,1}^{k_1}\dotsm\zz_{i,i-1}^{k_{i-1}} \e_{k_1, \dotsc, k_{i-1}, k_i-1, k_{i+1} ,\dotsc, k_n}\pki & \textup{if } k_i\geq 1,\\
			0& \textup{if } k_i=0,
		\end{cases}
	\end{equation}
	for $i=1,\dotsc,l$ and that
	\begin{equation}\label{eq:adjoint_of_unitary}
		\Gen_i^\ast \e_{k_1,\dotsc,k_n}=
		\zz_{i,1}^{k_1}\dotsm\zz_{i,i-1}^{k_{i-1}} \e_{k_1, \dotsc, k_{i-1}, k_i-1, k_{i+1} ,\dotsc, k_n}\pki
	\end{equation}
	for $i=l+1,\dotsc,n$.
\end{remark}

The main result of the present paper is the injectivity of the standard representation $\strep$ of $\UAl$ (see \cref{res:standard_representation_is_injective}), as announced in \cite[Remark~4,7]{de_jeu_pinto:2020}. For this, we first construct a faithful unital completely positive map $\phi$ from $\UAl$ into the bounded operators on a Hilbert space. Stinespring's theorem then yields a representation $\pi_\phi$ of $\UAl$ on a larger Hilbert space, which we know to be injective as $\phi$ is faithful. A closer analysis of $\pi_\phi$ shows that it is essentially an inflation of the standard representation $\strep$, so that  $\strep$ is also injective.

	When $l=0$, $\UAl$ is a non-commutative torus. For this case, it is mentioned without further details in \cite[pp.3-4]{rieffel:1990} that the injectivity of the standard representation is a consequence of a certain GNS-construction. Our method in the general case extends the one sketched in \cite[pp.3-4]{rieffel:1990}. With the GNS construction now being replaced with the Stinespring construction, which is necessitated by the fact that not all generating isometries need be unitaries, matters become quite a bit more involved.

	\begin{remark}
		In \cite[p.6]{bhatt_saurabh_UNPUBLISHED:2023}, a representation $\Psi^{n-l,l}$ of $\UAl$ is defined which is easily seen to be unitarily equivalent to our standard representation. Using the fact that the non-commutative tori are generically simple, it is then established that  $\Psi^{n-l,l}$ is injective for generic values of the $z_{ij}$; see \cite[Theorem~ 2.8]{bhatt_saurabh_UNPUBLISHED:2023}. In \cite[Remark~2.9]{bhatt_saurabh_UNPUBLISHED:2023}, it is stated without further comments that this is, in fact, always the case. Our proof of the injectivity of the standard representation, which is valid for \emph{all} values of the $z_{ij}$, is fundamentally different from the one in \cite{bhatt_saurabh_UNPUBLISHED:2023} for generic $z_{ij}$,  and treats all values on an equal footing.
	\end{remark}

\begin{remark}\label{rem:mnemonic}
	The definitions in \cref{eq:general_isometries} are most easily remembered by using the following mnemonic. Replace $\e_{k_1,\dotsc,k_{i-1},k_{i},k_{i+1},\dotsc,k_n}$ with the monomial $\Gen_1^{k_1}\dotsm \Gen_{i-1}^{k_{i-1}} \Gen_{i}^{k_i} \Gen_{i+1}^{k_{i+1}}\dotsm \Gen_n^{k_n}$. Using \cref{eq:automatic_relations_in_universal_algebra}, `bring the acting initial factor $\Gen_i$ to its proper spot' by writing
	\[
	\bm {\Gen_i}\cdot \Gen_1^{k_1}\!\dotsm \Gen_{i-1}^{k_{i-1}} \bm{ \Gen_{i}^{k_i}} \Gen_{i+1}^{k_{i+1}}\dotsm \Gen_n^{k_n}\!=\!z_{i,1}^{k_1}\dotsm\z_{i,i-1}^{k_{i-1}}\Gen_1^{k_1}\dotsm \Gen_{i-1}^{k_{i-1}} \bm {\Gen_{i}^{k_i+1}} \Gen_{i+1}^{k_{i+1}}\dotsm \Gen_n^{k_n}
	\]
	and then replace
	$\Gen_1^{k_1}\dotsm \Gen_{i-1}^{k_{i-1}} \Gen_{i}^{k_i+1} \Gen_{i+1}^{k_{i+1}}\dotsm \Gen_n^{k_n}$ with $\e_{k_1,\dotsc,k_{i-1},k_{i}+1,k_{i+1},\dotsc,k_n}$ again.
	This observation is essential to the proof that the standard representation is injective; see the proof of \cref{res:action_equivalent_to_standard_representation}. Such an `action by reordering' will also be instrumental in \cref{sec:embeddings}.
\end{remark}

\medskip

\noindent This paper is organised as follows.

In \cref{sec:the_standard_representation_is_injective}, we show in a number of steps that the standard representation is injective.

One can select a subset of the generators of $\UAl$ and retain the corresponding structure constants $z_{ij}$.  This yields natural unital $\ast$-homomorphisms from `smaller' universal \Calgebras\ into `larger' ones. In \cref{sec:embeddings}, it is shown that `smaller' and `larger' are, indeed, correct to speak of: these natural unital $\ast$-homomorphisms are injective. The proof of this fact uses the injectivity of the standard representation.

In \cref{sec:deformation}, Rieffel's theory of deformation is applied. Using the fact that the standard representations are injective, it is shown that, for fixed $n$ and $l$, the algebras $\UAl$ are Rieffel deformations of each other for all choices of the $z_{ij}$. Taking the $z_{ij}$ equal to 1, it is then immediate that $\UAl$ is nuclear, a fact which is also established in \cite[Theorem~ 6.2]{rakshit_sarkar_suryawanshi:2022} for a larger class of algebras using a theorem of Rosenberg's (see \cite[Theorem~3]{rosenberg:1977}). Since K-theory is stable under Rieffel deformation, it is then straightforward to compute the K-groups of $\UAl$. Using more elaborate methods, these are also determined in \cite{bhatt_saurabh_UNPUBLISHED:2023}.

\section{Injectivity of the standard representation of $\UAl$}\label{sec:the_standard_representation_is_injective}


\noindent
In this section, we show that the standard representation $\strep$ of $\UAl$ on $\Hl$ is injective. As explained in \cref{sec:introduction_and_overview}, this is done by finding a faithful unital completely positive map $\phi$ from $\UAl$ into the bounded operators on a Hilbert space, and showing that the injective corresponding Stinespring representation is essentially an inflation of the standard representation $\strep$ of $\UAl$. Hence $\strep$ must itself be injective.

This programme is completed in a number of steps.

\subsection{The involutive algebra underlying $\UAl$}\label{subsec:underlying_involutive_subalgebra}

We let $\IUAl$ denote the universal unital involutive algebra generated by elements $\gen_1,\dots,\gen_n$ satisfying the relations in  \cref{eq:relations_in_universal_algebra,eq:automatic_relations_in_universal_algebra,eq:isometries_in_universal_algebra,eq:unitaries_in_universal_algebra}.
For $x\in\IUAl$, we set
\[
\norm{x}\coloneqq\sup\lrdesset{\norm{\pi(x)} : \pi\text{ is a unital $\ast$-representation of }\IUAl}.
\]
This supremum is finite since the $\pi(\gen_i)$ are always isometries, and $\norm{\,\cdot\,}$ is then a \Cstar-seminorm on $\IUAl$. If we let $I$ denote its kernel, then $\UAl$ is the completion of $\IUAl/I$ in the \Cstar-norm that $\norm{\,\cdot\,}$ induces on this quotient; see, e.g., \cite[II.8.3.1]{blackadar_OPERATOR_ALGEBRAS:2006} for the construction of universal \Cstar-algebras.

The first step in our programme is to show that $\norm{\,\cdot\,}$ is actually a norm on $\IUAl$, so that $\IUAl$ can be identified with a dense subalgebra of $\UAl$. For this, we again take $\Hl$ and $\Gen_1,\dotsc,\Gen_n$ as in \cref{eq:hilbert_space_definition,eq:general_isometries}, respectively, and we define\textemdash in an anticipating terminology and notation\textemdash the standard representation
\[
\strep:\IUAl\to \bounded(\Hl)
\]
of $\IUAl$ on $\Hl$ by requiring that $\strep(\gen_i)=\Gen_i$ for $i=1,\dotsc,n$. We shall show that the standard representation of $\IUAl$ is injective, which shows that $\norm{\,\cdot\,}$ is a norm.
We shall prove at the same time that a certain canonical spanning set for $\IUAl$ is, in fact, a basis of $\IUAl$

For this, we start by collecting a few algebraic results which will be used throughout the paper. The proofs are elementary and left to the reader.

\begin{lemma}\label{res:algebraic_results} Suppose that $n\geq 1$ is an integer and that, for all $i\neq j$ with $1\leq i,j\leq n$, $\zij\in\TT$ are given such that $\zij=\zzij$.
	Let $A$ be a unital involutive algebra, and suppose that $\gen_1,\dotsc,\gen_n$ are elements of $A$ satisfying the relations in  \cref{eq:relations_in_universal_algebra,eq:automatic_relations_in_universal_algebra}, and which are such that $\gen_i^\ast \gen_i=1$ for $i=1,\dotsc,n$.
	Then:
	\begin{enumerate}
		\item $\gen_j^\ast \gen_i\pa=\zij\pa \gen_i\pa \gen_j^\ast$ for all $i\neq j$ with $1\leq i,j\leq n$;
		\item $\gen_j^\ast \gen_i^\ast=\zz_{ij}\pa \gen_i^\ast \gen_j^\ast$ for all $i\neq j$ with $1\leq i,j\leq n$;
		\item $\gen_i^{\ast k}\gen_i^k=1$ for all $i$ with $1\leq i \leq n$ and all $k\geq 0$;
		\item $\gen_i^k \gen_i^{\ast k}$ is a selfadjoint idempotent for all $i$ with $1\leq i \leq n$ and all $k\geq 0$.
		\item for all $i$ with $1\leq i\leq n$ and for all $k$ and $l$ such that $0\leq k\leq l$,
		\[
		\left(\gen_i^k \gen_i^{\ast k}\right)\left(\gen_i^l \gen_i^{\ast l}\right)=\left(\gen_i^l \gen_i^{\ast l}\right)\left(\gen_i^k \gen_i^{\ast k}\right)=\gen_i^l \gen_i^{\ast l};
		\]
		\item $\gen_i\pa$ and $\gen_i^\ast$ commute with $\gen_j^k \gen_j^{\ast k}$ for all $i\neq j$ with $1\leq i,j\leq n$ and all $k\geq 0$.
	
	\end{enumerate}
\end{lemma}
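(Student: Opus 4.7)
The six parts are linked by a very uniform strategy: all relations are obtained from the four defining relations \cref{eq:relations_in_universal_algebra,eq:automatic_relations_in_universal_algebra}, together with $s_i^\ast s_i = 1$, by taking adjoints, iterating, and exploiting $|z_{ij}|=1$. I would simply verify the six parts in order, with each part building on the previous ones.

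For parts (1) and (2), my plan is to take adjoints. Applying the involution to $s_i^\ast s_j = \bar z_{ij} s_j s_i^\ast$ gives (1) immediately (using $\overline{\bar z_{ij}} = z_{ij}$), and applying it to $s_i s_j = z_{ij} s_j s_i$ gives (2). For part (3), I would do a short induction on $k$: the base case $k=0$ is trivial, and the inductive step is $s_i^{\ast(k+1)}s_i^{k+1} = s_i^\ast (s_i^{\ast k} s_i^k) s_i = s_i^\ast s_i = 1$. Part (4) then drops out: selfadjointness is visible on inspection, and idempotency is $(s_i^k s_i^{\ast k})(s_i^k s_i^{\ast k}) = s_i^k (s_i^{\ast k} s_i^k) s_i^{\ast k} = s_i^k s_i^{\ast k}$ by (3).

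For part (5), the key observation is that, for $0\leq k\leq l$, one may reduce $s_i^{\ast l} s_i^k = s_i^{\ast(l-k)}(s_i^{\ast k} s_i^k) = s_i^{\ast(l-k)}$ using (3), and analogously $s_i^{\ast k} s_i^l = s_i^{l-k}$. Substituting these into the two products $(s_i^l s_i^{\ast l})(s_i^k s_i^{\ast k})$ and $(s_i^k s_i^{\ast k})(s_i^l s_i^{\ast l})$ collapses each one to $s_i^l s_i^{\ast l}$.

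Part (6) is where the most bookkeeping happens, so I would treat it last. From \cref{eq:automatic_relations_in_universal_algebra} one shows by induction on $k$ that $s_i s_j^k = z_{ij}^k s_j^k s_i$, and from (1) one shows $s_i s_j^{\ast k} = \bar z_{ij}^k s_j^{\ast k} s_i$; combining these two and using $z_{ij} \bar z_{ij} = 1$ (this is the only place where $|z_{ij}|=1$ is decisive), the phases cancel and we get $s_i (s_j^k s_j^{\ast k}) = (s_j^k s_j^{\ast k}) s_i$. An entirely parallel computation using (1) and (2), together with the induction formulas $s_i^\ast s_j^k = \bar z_{ij}^k s_j^k s_i^\ast$ and $s_i^\ast s_j^{\ast k} = z_{ij}^k s_j^{\ast k} s_i^\ast$, yields the commutation of $s_i^\ast$ with $s_j^k s_j^{\ast k}$. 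There is no real obstacle anywhere in this chain; the only point that needs attention is remembering that $|z_{ij}|=1$ is what makes the phases disappear in (6).
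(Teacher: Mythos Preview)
Your proposal is correct and is precisely the kind of straightforward verification the paper has in mind: the paper itself does not give a proof, stating only that ``the proofs are elementary and left to the reader.'' Your argument fills this in cleanly, and the order you chose (adjoints for (1)--(2), induction for (3), then (4)--(6) as consequences) is the natural one.
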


We can now describe a spanning set of $\IUAl$ that will turn out to be a basis. First of all, as a consequence of \cref{eq:relations_in_universal_algebra,eq:automatic_relations_in_universal_algebra} and \cref{res:algebraic_results}, if $i\neq j$, then each of the elements $\gen_i\pa$ and $\gen_i^\ast$ of $\IUAl$ commutes with each of $\gen_j\pa$ and $\gen_j^\ast$ up to a multiplicative unimodular constant. This enables one to fix an ordering for the factors in an arbitrary word in the $\gen_i$ and the $\gen_i^\ast$. Up to a unimodular constant, such a word in $\IUAl$ is equal to a product in $\gen_1$ and $\gen_1^\ast$, then a product in $\gen_2$ and $\gen_2^\ast$, etc. For $i=1,\dotsc,l$, one can simplify the product in $\gen_i$ and $\gen_i^\ast$ by applying the relation $\gen_i^\ast \gen_i=1$ as often as possible. For $i=l+1,\dotsc,n$, one can simplify using that $\gen_i^\ast\gen_i=\gen_i\gen_i^\ast=1$. It is thus seen that $\IUAl$ is spanned by the elements of the form
\[
\gen_1^{e_1}\gen_1^{\ast f_1}\dotsm \gen_l^{e_l}\gen_l^{\ast f_l}\uni_{l+1}^{g_{l+1}}\dotsm \uni_{n}^{g_n}
\]
for $e_1,\dotsc,e_l,f_1,\dotsc,f_l\geq 0$ and $g_{l+1},\dotsc,g_n\in\ZZ$. We then have the following result. Its proof uses \cref{eq:adjoint_of_pure_isometry,eq:adjoint_of_unitary}.

\begin{proposition}\label{res:linear_combination_sent_to_zero}
        Let $x\in\IUAl$ be written as finite linear combination
        \[
        x=\sum_{\genfrac{}{}{0pt}{1}{e_1,f_1,\dotsc,e_l,f_l\geq 0}{ g_{l+1},\dotsc,g_n\in\ZZ}}\lambda_{e_1,f_1,\dotsc,e_l,f_l,g_{l+1},\dotsc,g_n}\gen_1^{e_1}\gen_1^{\ast f_1}\gen_1^{e_1}\gen_1^{\ast f_1}\dotsm \gen_l^{e_l}\gen_l^{\ast f_l}\uni_{l+1}^{g_{l+1}}\dotsm \uni_{n}^{g_n}
        \]
        for some complex constants $\lambda_{e_1,f_1,\dotsc,e_l,f_l,g_{l+1},\dotsc,g_n}$.
        Suppose that $\strep(x)=0$. Then  $\lambda_{e_1,f_1,\dotsc,e_l,f_l,g_{l+1},\dotsc,g_n}=0$ for all $e_i,f_i\geq 0$  and all $g_j\in\ZZ$.
\end{proposition}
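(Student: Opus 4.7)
The strategy is to evaluate $\strep(x)$ on the vectors $\e_{K_1,\dotsc,K_l,0,\dotsc,0}\in\Hl$ for every $\mathbf{K}=(K_1,\dotsc,K_l)\in\NN_0^l$, and then to isolate the coefficients by combining the orthogonality of the canonical basis with an induction on $\mathbf{f}$. The first step is therefore to compute the action of a basic monomial on such a vector, using \cref{eq:general_isometries,eq:adjoint_of_pure_isometry,eq:adjoint_of_unitary} (and guided by the mnemonic in \cref{rem:mnemonic} for what the outcome ought to be). For $\mathbf{e},\mathbf{f}\in\NN_0^l$ and $\mathbf{g}\in\ZZ^{n-l}$, writing $\mathbf{d}=\mathbf{e}-\mathbf{f}\in\ZZ^l$, I expect a formula of the form
\begin{equation*}
    \strep\bigpars{\gen_1^{e_1}\gen_1^{\ast f_1}\dotsm \gen_l^{e_l}\gen_l^{\ast f_l}\uni_{l+1}^{g_{l+1}}\dotsm \uni_{n}^{g_n}}\e_{K_1,\dotsc,K_l,0,\dotsc,0}
    =\begin{cases} c(\mathbf{K},\mathbf{d},\mathbf{g})\,\e_{\mathbf{K}+\mathbf{d},\mathbf{g}} & \text{if } f_i\leq K_i \text{ for all } 1\leq i\leq l,\\ 0 & \text{otherwise,}\end{cases}
\end{equation*}
where $c(\mathbf{K},\mathbf{d},\mathbf{g})\in\TT$ is an explicit unimodular scalar. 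The structural point driving the whole argument is that $c$ depends on $(\mathbf{e},\mathbf{f})$ \emph{only through the difference} $\mathbf{d}$: each application of $\Gen_i$ or $\Gen_i^\ast$ contributes a phase built from the ambient indices $K_{j'}$ with $j'<i$, and these indices are not altered while the $i$-th slot is being processed, so the net contribution of $\gen_i^{e_i}\gen_i^{\ast f_i}$ at that slot only sees $e_i-f_i$.

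With this formula in hand, distinct pairs $(\mathbf{d},\mathbf{g})\in\ZZ^l\times\ZZ^{n-l}$ produce distinct target basis vectors $\e_{\mathbf{K}+\mathbf{d},\mathbf{g}}$, so the hypothesis $\strep(x)\e_{\mathbf{K},\vec{0}}=0$ separates, after dividing out the common non-zero phase $c(\mathbf{K},\mathbf{d},\mathbf{g})$, into the family of scalar identities
\begin{equation*}
    \sum_{\substack{\mathbf{f}'\in\NN_0^l,\ \mathbf{f}'\leq \mathbf{K}\\ \mathbf{f}'+\mathbf{d}\in\NN_0^l}}\lambda_{\mathbf{f}'+\mathbf{d},\,\mathbf{f}',\,\mathbf{g}}=0\qquad(\mathbf{K}\in\NN_0^l,\ \mathbf{d}\in\ZZ^l,\ \mathbf{g}\in\ZZ^{n-l}).
\end{equation*}
I then conclude by induction over the componentwise partial order on $\NN_0^l$. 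For the base case $\mathbf{f}=\vec{0}$, taking $\mathbf{K}=\vec{0}$ and $\mathbf{d}=\mathbf{e}$ leaves the single summand $\lambda_{\mathbf{e},\vec{0},\mathbf{g}}$ on the left, which is therefore zero. For the inductive step, to kill a given $\lambda_{\mathbf{e},\mathbf{f},\mathbf{g}}$ I specialise to $\mathbf{K}=\mathbf{f}$ and $\mathbf{d}=\mathbf{e}-\mathbf{f}$; the inductive hypothesis eliminates every term with $\mathbf{f}'\lneq\mathbf{f}$ and only $\lambda_{\mathbf{e},\mathbf{f},\mathbf{g}}$ survives.

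The main obstacle is the phase computation: verifying rigorously that $c$ depends only on $\mathbf{d}$ and not on $\mathbf{e}$ and $\mathbf{f}$ separately, which requires careful bookkeeping of the nested products of $z_{ij}$ that accrue as each $\Gen_i$ or $\Gen_i^\ast$ is applied in turn. This is not deep but it is the one structural input that causes the otherwise entangled relations among the $\lambda_{e_1,f_1,\dotsc,e_l,f_l,g_{l+1},\dotsc,g_n}$ to decouple into the clean finite sums above, after which the induction is routine.
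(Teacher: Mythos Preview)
Your proof is correct and follows the same overall strategy as the paper: evaluate $\strep(x)$ on the vectors $\e_{K_1,\dotsc,K_l,0,\dotsc,0}$ and induct over $\mathbf{f}$ to peel off the coefficients. The differences are in the bookkeeping. You induct on the componentwise partial order of $\NN_0^l$ and, to make that work, you first verify that the phase $c(\mathbf K,\mathbf d,\mathbf g)$ depends on $(\mathbf e,\mathbf f)$ only through $\mathbf d=\mathbf e-\mathbf f$; this is what collapses the relation at $(\mathbf K,\mathbf d,\mathbf g)$ to a single scalar equation in the $\lambda$'s with common $\mathbf d$. The paper instead inducts on the total degree $p=\sum_i f_i$. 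With that stronger hypothesis, when one acts on $\e_{\tilde f_1,\dotsc,\tilde f_l,0,\dotsc,0}$ the only surviving terms already have $\mathbf f=\tilde{\mathbf f}$ fixed, so distinct $(\mathbf e,\mathbf g)$ give distinct basis vectors and the phases (whatever they are, so long as unimodular) can be divided out term by term; no explicit phase computation is needed beyond reordering the word to end in $s_1^{\ast\tilde f_1}\dotsm s_l^{\ast\tilde f_l}$. Your route extracts a bit more structure (the $\mathbf d$-dependence of $c$) at the cost of the careful phase calculation you flagged as the main obstacle; the paper's route sidesteps that calculation entirely via a coarser but sufficient induction.
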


\begin{proof}
        Suppose that $l=0$. Since, in particular, $\strep(x)\e_{0,\dotsc,0}\pki=0$, and since $\strep(\uni_1^{g_1}\dotsm\uni_{n}^{g_n})\e_{0,\dotsc,0}\pki=\e_{g_1,\dotsc,g_n}$ for all $g_1,\dotsc,g_n\in\ZZ$, the conclusion in the statement is immediate.

        Suppose that $l$ is such that $1\leq l\leq n$. It is then sufficient to prove the following:
         Let $p\geq 0$, and suppose that $f_1,\dotsc,f_n\geq 0$ are such that $\sum_{i=1}^{l}f_i=p$. Then  $\lambda_{e_1,f_1,\dotsc,e_l,f_l,g_{l+1},\dotsc,g_n}=0$ for all $e_1,\dotsc,e_l\geq 0$  and all $g_{l+1},\dotsc,g_{n}\in\ZZ$. We prove this statement by induction on $p$. For $p=0$, the argument is as follows.

        Write $x$ as
        \[
        x=\sum_{p\geq 0}\!\! \sum_{\genfrac{}{}{0pt}{1}{f_1,\dotsc,f_l\geq 0}{f_1+\dotsb+f_l=p}}\!
        \sum_{\genfrac{}{}{0pt}{1}{e_1,\dotsc,e_l\geq 0}{ g_{l+1},\dotsc,g_n\in\ZZ}}\!\!
        \lambda_{e_1,f_1,\dotsc,e_l,f_l,g_{l+1},\dotsc,g_n} \gen_1^{e_1}\gen_1^{\ast f_1}\dotsm \gen_l^{e_l}\gen_l^{\ast f_l}\uni_{l+1}^{g_{l+1}}\dotsm \uni_{n}^{g_n}.
        \]

       We now let $\strep(x$) act on $\e_{0,\dotsc,0}\pki$. This is annihilated by  $\strep(\gen_1^\ast),\dotsc,\strep( \gen_l^\ast)$, so that
        \[
        \sum_{\genfrac{}{}{0pt}{1}{e_1,\dotsc,e_j\geq 0}{ g_{l+1},\dotsc,g_n\in\ZZ}}
        \lambda_{e_1,0,\dotsc,e_l,0,g_{l+1},\dotsc,g_n}\strep(\gen_1^{e_1}\dotsm \gen_l^{e_l}\uni_{l+1}^{g_{l+1}}\dotsm \uni_{n}^{g_n})\e_{0,\dotsc,0}\pki=0.
        \]
        Since
        \begin{equation}\label{eq:spreading_of_exponents_as_indices}
        \strep(\gen_1^{e_1}\dotsm \gen_l^{e_l}\uni_{l+1}^{g_{l+1}}\dotsm \uni_{n}^{g_n})\e_{0,\dotsc,0}\pki=\e_{e_1,\dotsc,e_l,g_{l+1},\dotsc,g_n},
        \end{equation}
        we see that $\lambda_{e_1,0,\dotsc,e_l,0,g_{l+1},\dotsc,g_n}=0$ for all $e_1,\dotsc,e_l\geq 0$ and all $g_l,\dotsc,g_n\in\ZZ$.
        This proves the statement for $p=0$.

        Assuming the statement for $p_0\geq 0$, we have
        \[
        x=\sum_{p\geq p_0+1}\!\!\! \sum_{\genfrac{}{}{0pt}{1}{f_1,\dotsc,f_l\geq 0}{f_1+\dotsb+f_l=p}}\!
        \sum_{\genfrac{}{}{0pt}{1}{e_1,\dotsc,e_l\geq 0}{ g_{l+1},\dotsc,g_n\in\ZZ}}\!\!\!\!\!\!\!\!\!\lambda_{e_1,f_1,\dotsc,e_l,f_l,g_{l+1},\dotsc,g_n} \gen_1^{e_1}\gen_1^{\ast f_1}\dotsm \gen_l^{e_l}\gen_l^{\ast f_l}\uni_{l+1}^{g_{l+1}}\dotsm \uni_{n}^{g_n}.
        \]
        Fix $\tilde{f}_1,\dotsc,\tilde{f}_l\geq 0$ such that $\sum_{i=1}^{l}\tilde{f}_i=p_0+1$, and consider the action of $\strep(x)$ on $\e_{\tilde{f}_1,\dotsc,\tilde{f}_l,0,\dotsc,0}$.
        If $f_1,\dotsc,f_l\geq 0$ are such that $\sum_{i=1}^{l} f_i> p_0+1$, then there exists $i_0$ with $1\leq i_0\leq l$ such that $f_{i_0}> \tilde{f}_{i_0}$. Then $\strep(\gen_{i_0}^{\ast f_{i_0}})$ annihilates $\e_{\tilde{f}_1,\dotsm\tilde{f}_l,0,\dotsc, 0}$.
        If $f_1,\dotsc,f_l\geq 0$ are such that$\sum_{i=1}^{l} f_i = p_0+1$, but $(f_1,\dotsc,f_l)\not=(\tilde{f}_1,\dotsc,\tilde{f}_l)$, then again there exists $i_0$ with $1\leq i_0\leq l$ such that $f_{i_0}>\tilde{f}_{i_0}$, and again $\strep(\gen_{i_0}^{\ast f_{i_0}})$ annihilates $\e_{\tilde{f}_1,\dotsm,\tilde{f}_l,0,\dotsc, 0}$.
        Hence
        \[
        \sum_{\genfrac{}{}{0pt}{1}{e_1,\dotsc,e_l\geq 0}{ g_{l+1},\dotsc,g_n\in\ZZ}}\!\!\!\!\!
        \lambda_{e_1,\tilde{f}_1,\dotsc,e_l,\tilde{f}_l,g_{l+1},\dotsc,g_n}    \strep(\gen_1^{e_1}\gen_1^{\ast \tilde{f}_1}\dotsm \gen_l^{e_l}\gen_l^{\ast \tilde{f}_l}\uni_{l+1}^{g_{l+1}}\dotsm \uni_{n}^{g_n}) \e_{\tilde{f}_1\dotsm\tilde{f}_l,0,\dotsc,0}=0.
        \]
        The words $\gen_1^{e_1}\gen_1^{\ast \tilde{f}_1}\dotsm \gen_l^{e_l}\gen_l^{\ast \tilde{f}_l}\uni_{l+1}^{g_{l+1}}\dotsm \uni_{n}^{g_n}$ occurring in this summation can be rewritten to end with $\gen_1^{\ast \tilde{f}_1}\dotsm \gen_l^{\ast \tilde{f}_l}$ at the cost of a unimodular constant. Since, furthermore, $\pi(\gen_1^{\ast \tilde{f}_1}\dotsm \gen_l^{\ast \tilde{f}_l}) \e_{\tilde{f}_1\dotsm \tilde{f}_l,0,\dotsc,0}$ equals $\e_{0,\dotsc,0}\pki$ up to a unimodular constants, \cref{eq:spreading_of_exponents_as_indices} then shows that
       $\lambda_{e_1,\tilde{f_1},\dotsc,e_l,\tilde{f_l},g_{l+1},\dotsc,g_n}=0$ for all $e_1,\dotsc,e_l\geq 0$ and all $g_{l+1},\dotsc,g_n\in\ZZ$.
        Since $\tilde{f}_1,\dotsc,\tilde{f}_l\geq 0$ were arbitrary subject to the condition $\sum_{i=1}^{l}\tilde{f}_i=p_0+1$, this completes the induction step.
\end{proof}

The following consequences of \cref{res:linear_combination_sent_to_zero} are immediate.
\begin{corollary}\label{res:basis_and_injectivity}\quad
        \begin{enumerate}
                \item The set
                \[
                \desset{
                \gen_1^{e_1}\gen_1^{\ast f_1}\dotsm \gen_l^{e_l}\gen_l^{\ast f_l}\uni_{l+1}^{g_{l+1}}\dotsm \uni_{n}^{g_n}: e_1,f_1,\dotsc,e_l,f_l\geq 0,\,  g_{l+1},\dotsc,g_n\in\ZZ}
                \]
                is a basis of $\IUAl$.
                \item The standard representation $\strep$ of $\IUAl$ on $\Hl$ is injective.
                \item $\IUAl$ can canonically be identified with a dense unital involutive subalgebra of $\UAl$.
        \end{enumerate}
\end{corollary}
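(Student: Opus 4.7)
The plan is to read off all three assertions directly from \cref{res:linear_combination_sent_to_zero}, with essentially no further argument required; the substantive work has already been done.

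For part (1), the discussion immediately preceding \cref{res:linear_combination_sent_to_zero} already showed that the indicated set spans $\IUAl$: the commutation relations \cref{eq:relations_in_universal_algebra,eq:automatic_relations_in_universal_algebra} allow one to order the factors of any word by generator index at the cost of a unimodular scalar, then \cref{eq:isometries_in_universal_algebra} collapses the $i$th block for $1\leq i\leq l$ to a product of the form $\gen_i^{e_i}\gen_i^{\ast f_i}$, and \cref{eq:unitaries_in_universal_algebra} reduces the $i$th block for $l+1\leq i\leq n$ to a single integer power $\uni_i^{g_i}$. Only linear independence remains. Given a finite linear combination of the proposed basis elements that vanishes in $\IUAl$, I would apply $\strep$ to both sides and invoke \cref{res:linear_combination_sent_to_zero} to force every coefficient to be zero.

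Part (2) is equally quick. If $\strep(x)=0$ for some $x\in\IUAl$, then by the spanning statement from part (1) I can write $x$ as a finite linear combination of the proposed basis elements, and \cref{res:linear_combination_sent_to_zero} again annihilates every coefficient, so $x=0$.

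For part (3), recall from \cref{subsec:underlying_involutive_subalgebra} that the $\Cstar$-seminorm on $\IUAl$ is defined as $\norm{x}=\sup_\pi\norm{\pi(x)}$, the supremum being taken over all unital $\ast$-representations of $\IUAl$. Since $\strep$ is itself such a representation, one has $\norm{x}\geq\norm{\strep(x)}$ for every $x\in\IUAl$, and part (2) then shows that $\norm{x}>0$ whenever $x\neq 0$. Hence the kernel $I$ of $\norm{\,\cdot\,}$ is trivial, and $\IUAl=\IUAl/I$ embeds canonically, and densely by construction, as a unital involutive subalgebra of the completion $\UAl$. I do not anticipate any obstacle here: the real content is already packaged in \cref{res:linear_combination_sent_to_zero}, and this corollary amounts to three standard bookkeeping deductions from that single proposition.
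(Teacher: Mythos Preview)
Your proposal is correct and matches the paper's approach: the paper simply states that all three parts are immediate consequences of \cref{res:linear_combination_sent_to_zero}, and you have spelled out exactly those immediate deductions.
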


\subsection{An injective representation of a $\mathrm{C}^\ast$-subalgebra of $\UAl$}\label{subsec:injective_representation_of_subalgebra} 
	
We let $\IPAl$ denote the unital involutive subalgebra of $\IUAl\subseteq\UAl$ that is generated by the set of (range) projections
\[
\desset{\gen_i^k \gen_i^{\ast k} : i=1,\dotsc,l,\,k \geq 0}.
\]
When $l=0$, $\IPAl$ consists of the multiples of the identity. We let $\PAl$ denote the closure of $\IPAl$ in $\UAl$. It follows from \cref{res:algebraic_results} that $\IPAl$ and $\PAl$ are commutative.
The second step in our programme consists of finding a particular unital injective representation of $\PAl$, which we shall use later (via a projection from $\UAl$ onto $\PAl$) to define a faithful unital completely positive map from $\UAl$ to the bounded operators on a Hilbert space. This injective unital representation of $\PAl$ is the following.

Take the standard representation $\strep$ of $\UAl$ on $\Hl$, and let $\ssp$ be the closed linear span $\ssp$ of $\set{\e_{k_1,\dotsc,k_l, 0, \dotsc, 0}: k_1,\dotsc,k_l\geq 0}$ in $\Hl$; when $l=0$ this is simply the span of $ \e_{0,\dotsc,0}$. In order to shorten the notation, we set
\[
\ee_{k_1,\dotsc,k_l}\coloneqq \e_{k_1,\dotsc,k_l, 0, \dotsc, 0}
\] for $k_1,\dotsc,k_l\geq 0$. Then \cref{eq:general_isometries,eq:adjoint_of_pure_isometry} show that, for $i=1,\dotsc,l$ and $k_1,\dotsc,k_l\geq 0$,
\begin{equation}\label{eq:restricted_action_of_isometries}
	\strep(\gen_i\pa)\ee_{k_1,\dotsc,k_{i-1}, k_{i},k_{i+1},\dotsc,k_l}\coloneqq \z_{i,1}^{k_1}\dotsm\z_{i,i-1}^{k_{i-1}} \ee_{k_1,\dotsc,k_{i-1},k_i+1,k_{i+1},\dotsc, k_l},
\end{equation}
and
\begin{equation}\label{eq:restricted_action_of_adjoints_of_isometries}
	\strep(\gen_i^\ast) \ee_{k_1,\dotsc,k_l} =
	\begin{cases}
		\zz_{i,1}^{k_1}\dotsm\zz_{i,i-1}^{k_{i-1}} \ee_{k_1, \dotsc, k_{i-1}, k_i-1, k_{i+1} ,\dotsc, k_l} & \textup{if } k_i\geq 1,\\
		0& \textup{if } k_i=0.
	\end{cases}
\end{equation}

Hence $\ssp$ affords a representation of the \Cstar-subalgebra of $\UAl$ that is generated by $\gen_1,\dotsc,\gen_l$ and their adjoints. In particular, we can define a unital representation $\resstrep:\PAl\to \bounded(\ssp)$ of $\PAl$ on $\ssp$ by setting
\[
\resstrep(x)h:=\strep(x)h
\]
for $x\in\PAl$ and $h\in \ssp$. We shall prove that $\resstrep$ is injective by showing that its restriction to the dense subalgebra $\IPAl$ of $\PAl$ is isometric.

As a preparation for this, we note that the restriction of $\rho$ to $\IPAl$ is injective. To see this, we let $\mathcal B^0$ denote the involutive subalgebra of $\IUAl$ that is generated by $\gen_1,\dotsc,\gen_l$ and their adjoints. It follows from a double application of the first part of \cref{res:basis_and_injectivity} that $\mathcal B$ is the universal unital involutive algebra that is generated by elements $s_1,\dotsc,s_l$ satisfying
\cref{eq:relations_in_universal_algebra,eq:automatic_relations_in_universal_algebra,eq:isometries_in_universal_algebra}, but then for indices between 1 and $l$. Since the restricted standard representation of $\mathcal B^0$ to $\ssp$ is obviously equivalent to the standard representation of $\mathcal B^0$, the second part of \cref{res:basis_and_injectivity} shows that it is injective on the entire $\mathcal B^0$, and then in particular on $\IPAl$.

\begin{lemma}\label{res:element_is_linear_combination_of_projections}
        Let $x$ be an element of $\IPAl$. Then there exist pairwise orthogonal projections $p_1,\dotsc,p_N$ in $\IPAl$ and $\lambda_1,\dotsc,\lambda_N\in\CC$ such that $x=\sum_{i=1}^N \lambda_i p_i$.
\end{lemma}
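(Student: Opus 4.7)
Write $p_i^{(k)} \coloneqq \gen_i^k \gen_i^{\ast k}$ for $1\le i \le l$ and $k\ge 0$; these are the designated generators of $\IPAl$. By \cref{res:algebraic_results}\partref{4}--\partref{6}, the $p_i^{(k)}$ are selfadjoint idempotents, projections with different $i$ commute, and for each fixed $i$, $p_i^{(a)}p_i^{(b)} = p_i^{(\max(a,b))}$. Since the generators are selfadjoint, the involution adds nothing, and these two multiplicative rules reduce any product of generators to one of the form $\prod_{i=1}^l p_i^{(k_i)}$ with $k_i\ge 0$ (setting $k_i=0$ when no factor with index $i$ appears, since $p_i^{(0)}=1$). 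Thus every $x\in\IPAl$ is a finite linear combination
\[
x = \sum_{(k_1,\dotsc,k_l)} \mu_{k_1,\dotsc,k_l}\, p_1^{(k_1)} p_2^{(k_2)} \dotsm p_l^{(k_l)}.
\]

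Next, I would refine the $p_i^{(k)}$ into mutually orthogonal pieces for each fixed $i$. For the given $x$, pick $K_i$ large enough that $k_i \le K_i$ for every multi-index appearing in the sum. For $0\le j \le K_i$ set $q_i^{(j)} \coloneqq p_i^{(j)} - p_i^{(j+1)}$, and set $q_i^{(K_i+1)} \coloneqq p_i^{(K_i+1)}$. Using \cref{res:algebraic_results}\partref{5}, one checks that these $K_i+2$ elements of $\IPAl$ are pairwise orthogonal projections satisfying $\sum_{j=0}^{K_i+1} q_i^{(j)} = 1$ and $p_i^{(k)} = \sum_{j=k}^{K_i+1} q_i^{(j)}$ for $0\le k \le K_i+1$.

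Finally, for each multi-index $J=(j_1,\dotsc,j_l)$ with $0\le j_i\le K_i+1$, define
\[
r_J \coloneqq q_1^{(j_1)} q_2^{(j_2)} \dotsm q_l^{(j_l)}.
\]
By \cref{res:algebraic_results}\partref{6}, these factors commute, so $r_J$ is a projection in $\IPAl$; and if $J\ne J'$, then some coordinate index differs, whence $q_i^{(j_i)} q_i^{(j_i')} = 0$ forces $r_J r_{J'}=0$. Expanding
\[
p_1^{(k_1)}\dotsm p_l^{(k_l)} = \prod_{i=1}^l \Bigl(\sum_{j_i\ge k_i} q_i^{(j_i)}\Bigr) = \sum_{J:\, j_i\ge k_i \ \forall i} r_J
\]
and substituting into the expansion of $x$, one obtains $x$ as a linear combination of the pairwise orthogonal projections $\{r_J\}$, completing the proof.

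\textbf{Main obstacle.} There is no real obstacle; the argument is essentially the standard ``refinement of a finite commuting family of projections'' construction. The only care needed is in the bookkeeping for the top index $K_i+1$, to ensure that the $q_i^{(j)}$ form a resolution of the identity (rather than only summing to $1-p_i^{(K_i+1)}$), so that the expansion formula for $p_i^{(k)}$ holds on the nose.
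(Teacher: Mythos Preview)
Your proposal is correct and follows essentially the same approach as the paper: both reduce $x$ to a linear combination of products $\prod_i s_i^{e_i}s_i^{\ast e_i}$, then refine each $p_i^{(k)}$ into the differences $q_i^{(j)}=p_i^{(j)}-p_i^{(j+1)}$ (with a top ``remainder'' projection), and take products $r_J=\prod_i q_i^{(j_i)}$ as the desired pairwise orthogonal family. The only cosmetic difference is that the paper uses a single common bound $N$ for all $i$ rather than your separate $K_i$'s.
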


\begin{proof}
        When $l=0$ this is clear, so we suppose that $1\leq l\leq n$.
        Take $x$ in $\IPAl$. \cref{res:algebraic_results} shows that it can be written as
        \begin{equation}\label{eq:decomposition}
        x=\sum _{e_1,\dotsc,e_l=0}^N \lambda_{e_1,\dotsc,e_l} \iso_1^{e_1}s_1^{\ast e_1}\dotsm \iso_l^{e_l}s_l^{\ast e_l}
        \end{equation}
        with $\lambda_{e_1,\dotsc,e_l}\in\CC$. We may suppose that $N\geq 1$. For $i=1,\dotsc,l$ and $k=0,\dotsc,N-1$, we define $q_i(k)\in\IPAl$ by setting
        \[
        q_i(k)\coloneqq \iso_i^{k}\iso_i^{\ast k}-\iso_i^{k+1}\iso_i^{\ast(k+1)};
        \]
        for $i=1,\dotsc,l$, we define $q_i(N)\in\IPAl$ by setting
        \[
        q_i(k)\coloneqq \iso_i^{N}\iso_i^{\ast N}.
        \]
        For a fixed $i$, it follows from \cref{res:algebraic_results} that the  $q_i(0),\dotsc,q_i(N)$ are pairwise orthogonal projections.

        Take a product $\iso_1^{e_1}s_1^{\ast e_1}\dotsm \iso_l^{e_l}s_l^{\ast e_l}$ occurring in the summation in \cref{eq:decomposition}. Each factor $\iso_i^{e_i} \iso_i^{\ast e_i}$ in it is a linear combination (in fact a telescoping sum) of $q_i(0),\dotsc,q_i(N)$. Hence each of these products, and then also $x$, is a linear combination of products of the form $q_1(\alpha_1)\dotsb q_l(\alpha_l)$ where $0\leq\alpha_1,\dotsc,\alpha_l\leq N$. Since $\IPAl$ is commutative, these  $(N+1)^l$ products are projections, and the pairwise orthogonality of the $q_i(k)$ for a fixed $i$ shows that they are pairwise orthogonal.
     \end{proof}

\begin{lemma}\label{res:norm_of_linear_combination_of_projections}\quad
        \begin{enumerate}
                \item Let $x$ be an element of a \Cstar-algebra $\mathcal A$ such that $x=\sum_{j=1}^{N} \lambda_j p_j$ for some $\lambda_1,\dotsc,\lambda_N\in\CC$ and pairwise orthogonal projections $p_1,\dotsc,p_N$ in $\mathcal A$.
                Then $\norm{x}\leq \max_{1\leq j\leq N} |\lambda_j|$.
                \item Let $H$ be a Hilbert space, let $P_1,\dotsc,P_N$ be mutually orthogonal non-zero projections on $H$, and let $\lambda_1,\dotsc,\lambda_N\in\CC$. Then $\lrnorm{\sum_{j=1}^{N} \lambda_jP_j}=\max_{1\leq j\leq N}|\lambda_j|$.
        \end{enumerate}
\end{lemma}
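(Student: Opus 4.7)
The plan is to attack part (1) by exploiting the pairwise orthogonality to compute $x^\ast x$ explicitly, and then to dominate it by a scalar multiple of the identity. Since $p_i p_j = \delta_{ij} p_i$, a direct expansion gives $x^\ast x = \sum_{j=1}^N |\lambda_j|^2 p_j$. Passing to the unitisation of $\mathcal{A}$ if necessary, I set $M \coloneqq \max_{1 \leq j \leq N} |\lambda_j|$ and observe that $Q \coloneqq \sum_{j=1}^N p_j$ is itself a projection (the orthogonality yields $Q^2 = Q$, and self-adjointness is automatic), hence $0 \leq Q \leq 1$. I would then write
\[
M^2 \cdot 1 - x^\ast x \;=\; \sum_{j=1}^N \bigpars{M^2 - |\lambda_j|^2} p_j \;+\; M^2 (1 - Q),
\]
which exhibits $M^2 \cdot 1 - x^\ast x$ as a sum of positive elements. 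Therefore $x^\ast x \leq M^2 \cdot 1$, whence $\norm{x^\ast x} \leq M^2$ and the $C^\ast$-identity delivers $\norm{x} \leq M$.

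For part (2), the upper bound $\lrnorm{\sum_j \lambda_j P_j} \leq \max_j |\lambda_j|$ is an immediate application of part (1) inside the $C^\ast$-algebra $\bounded(H)$. For the matching lower bound, I would pick an index $j_0$ realising the maximum $|\lambda_{j_0}| = M$ and, using that $P_{j_0} \neq 0$, choose a unit vector $v$ in the range of $P_{j_0}$. Pairwise orthogonality gives $P_j v = P_j P_{j_0} v = 0$ for $j \neq j_0$, so $\bigpars{\sum_j \lambda_j P_j} v = \lambda_{j_0} v$, which forces $\lrnorm{\sum_j \lambda_j P_j} \geq |\lambda_{j_0}| = M$.

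Neither part presents a serious obstacle; the only subtlety is the possible non-unitality of $\mathcal{A}$ in part (1), which is harmlessly handled by working in the unitisation since the norm there restricts to the original norm on $\mathcal{A}$. An alternative, slightly less elementary, route to part (1) is via the Gelfand transform: the (unital) $C^\ast$-subalgebra generated by $p_1, \dotsc, p_N$ is commutative, isomorphic to some $C(X)$, and under this isomorphism $x$ corresponds to a locally constant function taking values in $\set{0, \lambda_1, \dotsc, \lambda_N}$ whose supremum norm is at most $M$. I prefer the positivity argument as it is entirely self-contained and avoids invoking Gelfand duality.
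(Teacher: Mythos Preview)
Your proof is correct. Part~(2) matches the paper's argument essentially verbatim: both invoke part~(1) for the upper bound and evaluate on a unit vector in the range of some $P_j$ for the lower bound.

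For part~(1), however, your route differs from the paper's. The paper offers two arguments: first, represent $\mathcal{A}$ on a Hilbert space and appeal to the Pythagorean theorem; alternatively, observe that $x$ is normal so that $\norm{x}$ equals its spectral radius, compute $x^n=\sum_j \lambda_j^n p_j$ via orthogonality, and bound $\norm{x^n}^{1/n}\leq (N\max_j|\lambda_j|^n)^{1/n}\to\max_j|\lambda_j|$. Your approach instead computes $x^\ast x=\sum_j|\lambda_j|^2 p_j$ and exhibits $M^2\cdot 1 - x^\ast x$ as a sum of positive elements in the unitisation, then applies the $C^\ast$-identity. Your argument is arguably the most self-contained of the three, needing neither a faithful Hilbert-space representation nor the norm--spectral-radius equality for normal elements; the only price is the (harmless) passage to the unitisation. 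The paper's spectral-radius approach has the minor advantage of not requiring a unit at any stage, since the triangle inequality $\norm{\sum_j\lambda_j^n p_j}\leq N\max_j|\lambda_j|^n$ holds in any $C^\ast$-algebra.
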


\begin{proof}
	The first part follows easily from the Pythagorean theorem once $\mathcal A$ has been realised as \Cstar-algebra of operators on a Hilbert space. Alternatively, one can observe that, since $a$ is normal, the norm of $a$ equals its spectral radius. Hence
        \begin{align*}
        \norm{x} &= \lim_{n\to \infty} \lrnorm{\left(\sum_{j=1}^{N} \lambda_jp_j\right)^n}^{\frac{1}{n}}\\
        &= \lim_{n\to \infty} \lrnorm{\sum_{j=1}^{N} \lambda_j^np_j}^{\frac{1}{n}}.
        \end{align*}
        Since $\lrnorm{\sum_{j=1}^{N} \lambda_j^np_j}^{\frac{1}{n}} \leq \left(N(\max_{1\leq j\leq N}|\lambda_j|)^n\right)^{\frac{1}{n}}$, the first part follows.

For the second part, we have $\lrnorm{\sum_{j=1}^{N} \lambda_jP_j}\leq\max_{1\leq j\leq N} |\lambda_j|$ from the first part.
        Consideration of the action of $ \sum_{j=1}^{N} \lambda_j P_j$ on the (non-zero) subspace $P_jH$ of $H$ shows that $\lrnorm{\sum_{j=1}^{N} \lambda_jP_j}\geq |\lambda_j|$ for all $j$ such that $1\leq j\leq N$.
\end{proof}

\begin{lemma}\label{res:rho_isometric_on_subalgebra}
	Let $x$ be an element of $\IPAl$. Then $\norm{\resstrep(x)}=\norm{x}$.
\end{lemma}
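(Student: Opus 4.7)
The plan is to reduce $x$ to a nice sum of pairwise orthogonal projections and then compare the norm of such a sum in the abstract $\Cstar$-algebra with its norm when realised on the Hilbert space $\ssp$. Concretely, I would first invoke \cref{res:element_is_linear_combination_of_projections} to write
\[
x=\sum_{j=1}^N\lambda_j p_j
\]
for some $\lambda_j\in\CC$ and pairwise orthogonal projections $p_1,\dotsc,p_N$ in $\IPAl$. After discarding any terms with $p_j=0$ (which does not alter $x$), we may assume that every $p_j$ is non-zero.

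Next I would apply $\resstrep$. Since $\resstrep$ is a unital $\ast$-homomorphism, $\resstrep(p_1),\dotsc,\resstrep(p_N)$ are pairwise orthogonal projections on $\ssp$. The crucial ingredient is the observation, already recorded in the paragraph preceding the lemma, that the restriction of $\resstrep$ to the subalgebra $\mathcal B^0$ of $\IUAl$ generated by $\gen_1,\dotsc,\gen_l$ and their adjoints is injective (obtained from the second part of \cref{res:basis_and_injectivity} applied to the `smaller' universal algebra generated by $\gen_1,\dotsc,\gen_l$). Since $\IPAl\subseteq\mathcal B^0$, this gives $\resstrep(p_j)\neq 0$ for every $j$.

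With all $\resstrep(p_j)$ being non-zero mutually orthogonal projections on $\ssp$, the second part of \cref{res:norm_of_linear_combination_of_projections} yields
\[
\norm{\resstrep(x)}=\Bignorm{\sum_{j=1}^N\lambda_j\resstrep(p_j)}=\max_{1\leq j\leq N}\abs{\lambda_j}.
\]
On the other hand, the first part of \cref{res:norm_of_linear_combination_of_projections}, applied in $\UAl$, gives $\norm{x}\leq\max_{1\leq j\leq N}\abs{\lambda_j}$. Since $\resstrep$ is a $\ast$-homomorphism between $\Cstar$-algebras, it is contractive, hence $\norm{\resstrep(x)}\leq\norm{x}$. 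Combining these three inequalities yields
\[
\max_{1\leq j\leq N}\abs{\lambda_j}=\norm{\resstrep(x)}\leq\norm{x}\leq\max_{1\leq j\leq N}\abs{\lambda_j},
\]
so all inequalities are equalities and $\norm{\resstrep(x)}=\norm{x}$, as required.

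The only real obstacle is establishing that $\resstrep(p_j)\neq 0$ for each non-zero $p_j$; everything else is a routine assembly of the two preceding lemmas. This obstacle is handled by the injectivity of $\resstrep$ on the larger subalgebra $\mathcal B^0$, which is a direct consequence of the fact that the standard representation of $\IUAl$ itself is injective (reducing to the `smaller' setting where only isometric generators are present).
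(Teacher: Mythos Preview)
Your proof is correct and follows essentially the same route as the paper's: decompose $x$ via \cref{res:element_is_linear_combination_of_projections}, discard zero projections, use the injectivity of $\resstrep$ on $\IPAl$ to ensure $\resstrep(p_j)\neq 0$, and then combine the two parts of \cref{res:norm_of_linear_combination_of_projections} with the contractivity of $\resstrep$ to force equality. The paper's argument is identical in structure and in the lemmas invoked.
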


\begin{proof}
        We may suppose that $x\neq 0$. According to \cref{res:element_is_linear_combination_of_projections},
        there exist pairwise orthogonal projections $p_1,\dotsc,p_N$ in $\IPAl$ and $\lambda_1,\dotsc,\lambda_N\in\CC$ such that $x=\sum_{i=1}^N \lambda_i p_i$. We may suppose that $p_i\neq 0$ for all $i$. Since $\resstrep$ is injective, we also have $\resstrep(p_i)\neq 0$ for all $i$.
        Then $\norm{\resstrep(x)}=\max_{1\leq j\leq N}|\lambda_j|$ by the second part of \cref{res:norm_of_linear_combination_of_projections}, whereas its first part shows that $\norm{x} \leq \max_{1\leq j\leq N}|\lambda_j|$. Since certainly $\norm{\resstrep(x)}\leq\norm{x}$, we have
        \[
        \max_{1\leq j\leq N}|\lambda_j| = \norm{\resstrep(x)}\leq\norm{x}\leq \max_{1\leq j\leq N}|\lambda_j|.
        \]
\end{proof}

We have now reached the goal of this subsection.

\begin{corollary}\label{res:rho_injective_on_subalgebra}
        The unital representation $\resstrep: \PAl\to \bounded(\ssp)$ of $\PAl$ on $\ssp$ that is determined by \cref{eq:restricted_action_of_isometries,eq:restricted_action_of_adjoints_of_isometries} is injective.
\end{corollary}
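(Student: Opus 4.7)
The plan is very short: I want to bootstrap from the just-established isometry of $\resstrep$ on the dense subalgebra $\IPAl$ (\cref{res:rho_isometric_on_subalgebra}) to the whole of $\PAl$, and then conclude that $\resstrep$ is injective because isometric maps are injective.

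First I would note that $\resstrep:\PAl\to\bounded(\ssp)$ is a well-defined unital $\ast$-homomorphism between \Calgebras. (The text has already observed that $\ssp$ is invariant under the \Cstar-subalgebra of $\UAl$ generated by $\gen_1,\dotsc,\gen_l$ and their adjoints, via \cref{eq:restricted_action_of_isometries,eq:restricted_action_of_adjoints_of_isometries}, and $\PAl$ sits inside that \Cstar-subalgebra; so restriction to $\ssp$ indeed gives a unital $\ast$-representation of $\PAl$.) As a $\ast$-homomorphism between \Calgebras, $\resstrep$ is automatically contractive, hence continuous.

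Next, by \cref{res:rho_isometric_on_subalgebra}, $\norm{\resstrep(x)}=\norm{x}$ for every $x\in\IPAl$. By definition, $\IPAl$ is dense in $\PAl$, so for any $x\in\PAl$ I can pick a sequence $(x_k)\subseteq\IPAl$ with $x_k\to x$ in $\UAl$. Then $\norm{\resstrep(x_k)}=\norm{x_k}$ for each $k$, and passing to the limit (using continuity of $\resstrep$ on the left and continuity of the norm on the right) gives $\norm{\resstrep(x)}=\norm{x}$. Thus $\resstrep$ is isometric on all of $\PAl$, and in particular injective.

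There is essentially no obstacle here; the substantive work has already been done in \cref{res:element_is_linear_combination_of_projections,res:norm_of_linear_combination_of_projections,res:rho_isometric_on_subalgebra}. The only mild subtlety is the verification that the formula $\resstrep(x)h\coloneqq\strep(x)h$ really does define a unital $\ast$-representation of the \emph{completion} $\PAl$ (and not merely of $\IPAl$) on $\ssp$, which follows from the invariance of $\ssp$ under $\strep$ restricted to the \Cstar-subalgebra generated by $\gen_1,\dotsc,\gen_l$ together with their adjoints, and from the containment $\PAl\subseteq\mathrm{C}^\ast(\gen_1,\dotsc,\gen_l)\subseteq\UAl$.
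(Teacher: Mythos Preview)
Your proposal is correct and is exactly the intended argument: the paper presents this as an immediate corollary of \cref{res:rho_isometric_on_subalgebra} with no proof written out, and you have simply spelled out the routine density-plus-continuity step that extends the isometry from $\IPAl$ to its closure $\PAl$.
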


\subsection{A faithful unital completely positive map on $\UAl$}\label{subsec:faithful_completely_positive_map}

Using the injective unital representation $\resstrep$ of $\PAl$ on $\ssp$ from \cref{subsec:injective_representation_of_subalgebra}, it is not difficult anymore to find a faithful unital completely positive map from $\UAl$ to the bounded operators on a Hilbert space. The only extra ingredient that is needed is a unital norm one projection from $\UAl$ onto $\PAl$, and this is easily found.

For $t=(t_1,\dotsc,t_n)\in\TT^n$, the elements $t_1\gen_1,\dotsc,t_n\gen_n$ of $\UAl$ still satisfy the relations \cref{eq:relations_in_universal_algebra,eq:automatic_relations_in_universal_algebra,eq:isometries_in_universal_algebra,eq:unitaries_in_universal_algebra}. Hence there exists an automorphism $\alpha_t:\UAl\to\UAl$ such that $\alpha_t(\gen_i)=t_i\gen_i$ for $i=1,\dotsc,n$. The resulting action of $\TT^n$ on $\UAl$ is easily seen to be strongly continuous, and this enables us to define a linear map $\theta:\UAl\to\UAl$ by setting
\begin{equation}\label{eq:theta}
\theta(x)\coloneqq \int_{\TT^n}\!\!\!\!\!\alpha_t(x)\di{t},
\end{equation}
where the total measure of $\TT^n$ equals 1. Then $\theta$ is unital and bounded, and $\norm{\theta}=1$. A moment's thought shows that, for $e_1,f_1,\dotsc,e_l,f_l\geq 0$ and $g_{l+1},\dotsc,g_{n}\in\ZZ$,
\begin{align}
\theta(\gen_1^{e_1}\gen_1^{\ast f_1}\dotsm \gen_l^{e_l}\gen_l^{\ast f_l} \uni_{l+1}^{g_{l+1}}\dotsc\uni_{n}^{g_n})&=0\label{eq:theta_on_words_one}\\
\intertext{unless $e_1=f_1,\dotsc,e_l=f_l$ and $g_{l+1}=\dotsc=g_{n}=0$, and that}
\theta(\gen_1^{e_1}\gen_1^{\ast e_1}\dotsm \gen_l^{e_l}\gen_l^{\ast e_l})&=\gen_1^{e_1}\gen_1^{\ast e_1}\dotsm \gen_l^{e_l}\gen_l^{\ast e_l}\label{eq:theta_on_words_two}.
\end{align}
Since the products $\gen_1^{e_1}\gen_1^{\ast f_1}\dotsm \gen_l^{e_l}\gen_l^{\ast f_l}$ span the dense subalgebra $\IUAl$ of $\UAl$, and the products $\gen_1^{e_1}\gen_1^{\ast e_1}\dotsm \gen_l^{e_l}\gen_l^{\ast e_l}$ span the dense subalgebra $\IPAl$ of $\PAl$, $\theta$ is a norm one projection $\theta: \UAl\to\PAl$.

\begin{proposition}\label{res:theta_properties}
	The unital map $\theta: \UAl\to \PAl$ is completely positive and faithful.
\end{proposition}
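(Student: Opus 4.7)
The plan is to read off both properties from the construction of $\theta$ as the $\TT^n$-average of the $*$-automorphisms $\alpha_t$. These automorphisms assemble the normalised Haar integral into a conditional expectation onto the fixed-point subalgebra, which in the present setting is $\PAl$ by \cref{eq:theta_on_words_one,eq:theta_on_words_two}, so once the qualitative properties are in place one automatically has $\theta$ mapping into $\PAl$.

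For complete positivity I would pass to matrix amplifications. For each $m\geq 1$, the entrywise map $\alpha_t^{(m)}\coloneqq \alpha_t\otimes\idmap_{M_m(\CC)}$ on $M_m(\UAl)$ is a $*$-automorphism, hence preserves positivity, and the strong continuity of the $\TT^n$-action lifts to its amplification. The amplified averaging $\theta^{(m)}$ is given entrywise by $\theta^{(m)}(X)=\int_{\TT^n}\alpha_t^{(m)}(X)\di{t}$ for $X\in M_m(\UAl)$. When $X\geq 0$, the integrand takes values in the norm-closed convex cone of positive elements of $M_m(\UAl)$, and since this cone is preserved under Bochner integration against a probability measure, $\theta^{(m)}(X)\geq 0$. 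This holds for every $m$, so $\theta$ is completely positive.

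For faithfulness, suppose $x\in\UAl$ satisfies $\theta(x^*x)=0$, and let $\rho$ be any state on $\UAl$. Continuity of both $\rho$ and $t\mapsto\alpha_t(x^*x)$ lets me pull $\rho$ through the integral to obtain
\[
0=\rho(\theta(x^*x))=\int_{\TT^n}\rho(\alpha_t(x^*x))\di{t}.
\]
The integrand is continuous and non-negative on $\TT^n$, hence vanishes identically; in particular, setting $t=(1,\dotsc,1)$ gives $\rho(x^*x)=0$. Because states separate points of $\UAl$ and $x^*x$ is positive, this forces $x^*x=0$, and therefore $x=0$.

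I expect the faithfulness step to be the genuine content here\textemdash complete positivity is essentially automatic once one combines automorphisms with amplifications\textemdash and within that step the one thing to be careful about is the interchange of $\rho$ with the integral, which rests on the strong continuity of the action noted just before the proposition.
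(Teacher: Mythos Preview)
Your proof is correct. The faithfulness argument is essentially the paper's, merely run as the contrapositive: the paper starts from a nonzero $x$, chooses a state $\tau$ with $\tau(x^\ast x)>0$, and then observes that $\tau(\theta(x^\ast x))=\int_{\TT^n}\tau(\alpha_t(x^\ast x))\di{t}>0$ because the integrand is continuous, nonnegative, and strictly positive at the identity. You instead assume $\theta(x^\ast x)=0$ and deduce that the integrand vanishes identically, hence $\rho(x^\ast x)=0$ for every state $\rho$. Same ingredients, opposite direction.

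The complete positivity argument is where you genuinely diverge. The paper does not integrate at all here: having already noted that $\theta$ is a unital norm-one projection from $\UAl$ onto the \Cstar-subalgebra $\PAl$, it invokes Tomiyama's theorem to conclude that $\theta$ is a conditional expectation, and in particular completely positive. Your route---amplify to $M_m(\UAl)$, observe that each $\alpha_t^{(m)}$ is a $\ast$-automorphism and hence positive, and average against Haar measure into the closed convex positive cone---is more elementary and entirely self-contained. The trade-off is that Tomiyama's theorem gives a little more for free (the $\PAl$-bimodule identity $\theta(x_1yx_2)=x_1\theta(y)x_2$, which the paper records in a remark but does not need), whereas your argument requires nothing beyond the basic properties of the Bochner integral and the strong continuity already established.
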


\begin{proof}
	Since $\theta$ is a norm one projection from a \Cstar-algebra onto a \Cstar-subalgebra, Tomiyama's theorem (see \cite[II.6.10.2]{blackadar_OPERATOR_ALGEBRAS:2006}, for example) shows that $\theta$ is completely positive.
	We need to prove only that $\theta$ is faithful.
	Take a non-zero $x$ in $\UAl$, and next a state $\tau$ on $\UAl$ such that $\tau(x^\ast x)\not=0$.
	Then
	\[
	\tau(\theta(x^\ast x))=\int_{\TT^n}\!\!\!\!\!\tau(\alpha_t(x^\ast x)) \di{t}.
	\]
	Since the continuous integrand is non-negative, and strictly positive for $t=(1,\dotsc,1)$, the integral is strictly positive. Hence $\theta(x^\ast x)\neq 0$.
\end{proof}

\begin{remark}If $l=0$, then $\theta$ is tracial, as remarked on \cite[p.4]{rieffel:1990}. If  $l\geq 1$, then $\theta$ is not tracial because $\theta(\iso_1\pa \iso_1^{\ast 2} \cdot \iso_1^2 \iso_1^{\ast}) =\theta(
	\iso_1\pa \iso_1^{\ast})=\iso_1\pa \iso_1^{\ast}$, whereas $\theta(\iso_1^2 \iso_1^{\ast}\cdot \iso_1\pa \iso_1^{\ast 2})=\theta(\iso_1^2 \iso_1^{\ast 2})=\iso_1^2 \iso_1^{\ast 2}$.
\end{remark}

We now use the representation $\resstrep$ of $\PAl$ on the Hilbert space $L_l$ with orthonormal basis $\ee_{k_1,\dotsc,k_l}$ for $k_1,\dotsc,k_l\geq 0$ from \cref{subsec:injective_representation_of_subalgebra} to define
\[
\phi:\UAl\to \bounded(\ssp)
\]
by setting
\[
\phi\coloneqq \resstrep\circ\theta.
\]

This $\phi$ has the desired properties for an application of Stinespring's theorem in \cref{subsec:standard_representation_is_injective}.

\begin{theorem}\label{res:phi_properties}
	The map $\phi:\UAl\to \bounded(\ssp)$ is unital, faithful, and completely positive. Its image $\phi(\UAl)$ is the unital commutative \Cstar-subalgebra of $\bounded(\ssp)$ that is generated by the elements $\rho(\iso_i^k\iso_i^{\ast k})$ for $i=1,\cdots,l$ and $k\geq 0$, i.e., by the range projections of $\rho(s_i^k)$ for $i=1,\cdots,l$ and $k\geq 0$.
\end{theorem}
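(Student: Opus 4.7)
The approach is to decompose each of the three assertions along the factorisation $\phi=\resstrep\circ\theta$ and to read off the conclusions from the properties of the two factors that have already been established. First I would dispose of unitality and complete positivity. By \cref{res:theta_properties}, $\theta$ is unital and completely positive, and $\resstrep$ is a unital $\ast$-homomorphism between \Cstar-algebras, hence also unital and completely positive. Unitality and complete positivity are both preserved under composition, so $\phi$ inherits both.

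Next I would verify faithfulness. If $x\in\UAl$ and $x^\ast x\neq 0$, then \cref{res:theta_properties} yields $\theta(x^\ast x)\neq 0$, and this is a positive (hence non-zero) element of $\PAl$. By \cref{res:rho_injective_on_subalgebra}, $\resstrep$ is injective on $\PAl$, so
\[
\phi(x^\ast x)=\resstrep(\theta(x^\ast x))\neq 0,
\]
which is exactly faithfulness of $\phi$.

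Finally I would describe the image. Since $\theta$ is a projection of $\UAl$ onto $\PAl$, we have $\phi(\UAl)=\resstrep(\PAl)$. An injective $\ast$-homomorphism between \Cstar-algebras is automatically isometric, so $\resstrep(\PAl)$ is closed in $\bounded(\ssp)$ and equals the closure of $\resstrep(\IPAl)$. By the very definition of $\IPAl$ in \cref{subsec:injective_representation_of_subalgebra}, it is the unital involutive subalgebra of $\UAl$ generated by the projections $\iso_i^k \iso_i^{\ast k}$ for $i=1,\dotsc,l$ and $k\geq 0$; applying $\resstrep$ we see that $\resstrep(\IPAl)$ is the unital involutive subalgebra of $\bounded(\ssp)$ generated by the operators $\resstrep(\iso_i^k \iso_i^{\ast k})$, and closing up produces the required unital \Cstar-subalgebra. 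Commutativity transfers from $\PAl$ (which was observed to be commutative) through the $\ast$-homomorphism $\resstrep$. The identification with range projections is just the observation that $\resstrep(\iso_i^k)$ is the image of a partial isometry under a $\ast$-homomorphism, so $\resstrep(\iso_i^k)\resstrep(\iso_i^k)^\ast=\resstrep(\iso_i^k \iso_i^{\ast k})$ is its range projection.

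I do not expect a substantial obstacle here: every ingredient has been prepared by the earlier steps of the programme. The only conceptual point worth emphasising is that faithfulness is genuinely a two-stage statement and exploits both the faithfulness of the conditional expectation $\theta$ \emph{and} the injectivity of $\resstrep$ obtained in \cref{subsec:injective_representation_of_subalgebra}; neither factor alone would suffice.
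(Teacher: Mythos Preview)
Your proof is correct and follows essentially the same route as the paper: both argue that $\phi=\resstrep\circ\theta$ is unital and completely positive because each factor is, that faithfulness follows from the faithfulness of $\theta$ together with the injectivity of $\resstrep$, and that the description of the image is immediate from the construction of $\resstrep$ on $\PAl$. Your version simply spells out in more detail what the paper dismisses as ``clear from the construction.''
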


\begin{proof}
	Since $\theta:\UAl\to\PAl$ is unital and completely positive, and since $\resstrep:
	\PAl\to \bounded(\ssp)$ is a unital representation, the composition $\phi$ is also unital and completely positive.
	It is faithful since $\resstrep$ is injective on $\PAl$ and $\theta$ is faithful on $\UAl$.

	The final statement is clear from the construction in \cref{subsec:injective_representation_of_subalgebra}.
\end{proof}

\begin{remark}
	It is an additional consequence of Tomiyama's theorem that $\phi(x_1yx_2)=\resstrep(x_1)\phi(y)\resstrep(x_2)$ for $x_1,x_2\in\PAl$ and $y\in\UAl$, but we shall not need this.
\end{remark}

\subsection{The standard representation of ${\UAl}$ is injective}\label{subsec:standard_representation_is_injective}

We now apply Stinespring's Theorem (see \cite[II.6.9.7]{blackadar_OPERATOR_ALGEBRAS:2006}, for example) to the faithful unital completely positive map $\phi:\UAl\to \bounded(\ssp)$. The pertinent construction yields a representation $\pi_\phi$ of $\UAl$ on a Hilbert space $H_\phi$ containing $\ssp$, such that
\begin{equation}\label{eq:stinespring_relation}
\phi(x)=P_{\ssp} \pi_\phi(x)P_{\ssp}\
\end{equation}
for  $x\in\UAl$; here $P_{\ssp}:H_\phi\to \ssp$ is the associated projection. By \cref{eq:stinespring_relation}, the fact that $\phi$ is faithful implies that $\pi_\phi$ is injective. The remainder of this subsection is concerned with showing that $\pi_\phi$ is essentially an inflation of the standard representation $\strep$ of $\UAl$ on $\Hl$, which is, therefore, also injective.

In order to analyse $\pi_\phi$, we recall how it is obtained.

One starts by introducing a semi-inner product on $\UAl\otimes\ssp$ such that
\[
\ipstsp{x_1\otimes h_1, x_2\otimes h_2}=\ipssp{\phi(x_2^\ast x_1)h_1, h_2}
\]
for $x_1,x_2\in\UAl$ and $h_1,h_2\in\ssp$. The Hilbert space $H_\phi$ is then the completion of $\UAl\otimes\ssp$ modulo the isotropic vectors. Using square brackets to indicate elements of the quotient, we therefore have
\begin{align*}
\ipstsp{ [x_1\otimes h_1], [x_2\otimes h_2]}&=\ipssp{ \phi(x_2^\ast x_1)h_1, h_2}
\intertext{for $x_1,x_2\in\UAl$ and $h_1,h_2\in\ssp$. The representation}
\pi_\phi:\UAl&\to \bounded(H_\phi)
\intertext{is such that}
\pi_\phi(y)[x\otimes h]&= [yx\otimes h]
\end{align*}
for $x,y\in\UAl$ and $h\in\ssp$.

When $x_1,x_2\in\UAl$ and $h_1,h_2\in\ssp$ are such that $[x_1\otimes h_1]=[x_2\otimes h_2]$, then $[yx_1\otimes h_1]=[yx_2\otimes h_2]$ for all $y\in\UAl$ because $[yx_1\otimes h_1]=\pi_\phi(y)[x_1\otimes h_1]=\pi_\phi(y)[x_2\otimes h_2]=[yx_2\otimes h_2]$. We shall use this elementary observation a few times in the remainder of this subsection.

Using that $\UAl$ is the completion of $\IUAl$, it is easily seen that the elements
\[
[\iso_1^{e_1}\iso_1^{\ast f_1}\dotsm \iso_l^{e_l}\iso_l^{\ast f_l} \uni_{l+1}^{g_{l+1}}\dotsm \uni_{n}^{g_n}\otimes \ee_{k_1,\dotsc,k_l}]
\]
for $e_1,f_1,\dotsc,e_l,f_l\geq 0$, $g_{l+1},\dotsc, g_n\in\ZZ$, and $k_1,\dotsc,k_l\geq 0$ span a dense subspace of $H_\phi$.

For $k_1,\dotsc,k_l\geq 0$, let $L_{k_1,\dotsc,k_l}$ be the closed linear span in $H_\phi$ of
the elements
\[
	[\iso_1^{e_1}\iso_1^{\ast f_1}\dotsm \iso_l^{e_l}\iso_l^{\ast f_l} \uni_{l+1}^{g_{l+1}}\dotsm \uni_{n}^{g_n}\otimes \ee_{k_1,\dotsc, k_l}]
\]
for $e_1,f_1,\dotsc,e_l,f_l\geq 0, \, g_{l+1},\dotsc,g_n\in\ZZ$. Since
\[
L_{k_1,\dotsc,k_l}=\overline{\pi_\phi(\UAl)(1\otimes\ee_{k_1,\dotsc,k_l})},
\]
it is clear that $L_{k_1,\dotsc, k_n}$ is invariant under the action of $\UAl$.

\begin{lemma}\label{res:H_is_direct_sum}
        The Hilbert space $H_\phi$ is the Hilbert direct sum of its subspaces $L_{k_1,\dotsc, k_l}$ for $k_1,\dotsc,k_l\geq 0$. Each of these reduces $\pi_\phi$.
\end{lemma}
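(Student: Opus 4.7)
My plan has three ingredients: each $L_{k_1,\dotsc,k_l}$ reduces $\pi_\phi$; the algebraic sum $\sum_{k_1,\dotsc,k_l\geq 0}L_{k_1,\dotsc,k_l}$ is dense in $H_\phi$; and the family is pairwise orthogonal.

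The reducing property is free: the formula $L_{k_1,\dotsc,k_l}=\overline{\pi_\phi(\UAl)(1\otimes\ee_{k_1,\dotsc,k_l})}$ exhibits $L_{k_1,\dotsc,k_l}$ as a closed $\pi_\phi$-invariant subspace, and since $\pi_\phi$ is a $\ast$-representation its orthogonal complement is invariant as well. Density of the sum is likewise immediate, as the spanning family for $H_\phi$ recorded just before the lemma lies entirely inside $\bigcup_{k_1,\dotsc,k_l\geq 0}L_{k_1,\dotsc,k_l}$.

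The substantive step is orthogonality, which by continuity of the inner product reduces to showing that
\[
\ipstsp{[x_1\otimes\ee_{k_1,\dotsc,k_l}],\,[x_2\otimes\ee_{k_1^\prime,\dotsc,k_l^\prime}]}=\ipssp{\resstrep(\theta(x_2^\ast x_1))\ee_{k_1,\dotsc,k_l},\,\ee_{k_1^\prime,\dotsc,k_l^\prime}}
\]
vanishes whenever $x_1,x_2\in\IUAl$ and $(k_1,\dotsc,k_l)\neq(k_1^\prime,\dotsc,k_l^\prime)$. Since $x_2^\ast x_1\in\IUAl$, \cref{eq:theta_on_words_one,eq:theta_on_words_two} give $\theta(x_2^\ast x_1)\in\IPAl$. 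The key observation is then that $\resstrep$ acts diagonally in the basis $\set{\ee_{k_1,\dotsc,k_l}:k_1,\dotsc,k_l\geq 0}$: using \cref{eq:restricted_action_of_isometries,eq:restricted_action_of_adjoints_of_isometries}, one computes that $\resstrep(\iso_i^k\iso_i^{\ast k})\ee_{k_1,\dotsc,k_l}$ equals $\ee_{k_1,\dotsc,k_l}$ if $k_i\geq k$ and $0$ otherwise, since the unimodular phases from the $k$ applications each of $\resstrep(\iso_i^\ast)$ and $\resstrep(\iso_i)$ cancel. Since $\IPAl$ is generated by these range projections and $\resstrep$ is a homomorphism, every element of $\resstrep(\IPAl)$ is diagonal in this basis. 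In particular $\resstrep(\theta(x_2^\ast x_1))\ee_{k_1,\dotsc,k_l}$ is a scalar multiple of $\ee_{k_1,\dotsc,k_l}$, so its inner product with $\ee_{k_1^\prime,\dotsc,k_l^\prime}$ vanishes.

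The principal obstacle is essentially bookkeeping: confirming that $\theta$ takes $\IUAl$ into $\IPAl$ and that $\resstrep(\IPAl)$ acts diagonally in the preferred basis. Both facts are built into the explicit formulas already at our disposal, so the proof is mechanical once the Stinespring inner product has been unfolded in this way.
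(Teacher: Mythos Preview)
Your proof is correct and follows essentially the same approach as the paper's: both unfold the Stinespring inner product and use that $\phi(x_2^\ast x_1)=\resstrep(\theta(x_2^\ast x_1))$ lies in the image under $\resstrep$ of the range-projection algebra, which acts diagonally on the basis $\{\ee_{k_1,\dotsc,k_l}\}$, forcing the inner product to vanish for distinct index tuples. The only cosmetic differences are that you first pass to the dense subalgebra $\IUAl$ and invoke continuity (the paper works directly in $\UAl$), and that you argue the reducing property via the general fact that invariant subspaces of $\ast$-representations have invariant complements, whereas the paper deduces it after the orthogonal decomposition is in hand.
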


\begin{proof}
	For the first statement, we need to show only that the subspaces $L_ {k_1,\dotsc,k_l}$ and $L_ {k_1^\prime,\dotsc,k_n^\prime}$ are orthogonal when $(k_1,\dotsc,k_l)\neq(k_1^\prime,\dotsc,k_l^\prime)$. Take such different $l$-tuples. It is sufficient to show that
	\[
	\ipstsp{[x\otimes \ee_{k_1,\dotsc, k_l}], [y\otimes \ee_{k_1^\prime,\dotsc, k_l^\prime}]}=0\]
	for $x,y\in\UAl$. Now
        \[
        \ipstsp{[x\otimes \ee_{k_1,\dotsc, k_l}], [y\otimes \ee_{k_1^\prime,\dotsc, k_l^\prime}]}=\ipssp{
        	\phi(y^\ast x)\ee_{k_1,\dotsc, k_l}, \ee_{k_1^\prime,\dotsc, k_l^\prime}}.
        \]
        Since $\phi(y^\ast x)$ is in the \Cstar-subalgebra of $\bounded(\ssp)$ that is generated by (the restrictions to $H_l$ of) $\strep(\iso_i^k)\strep(\iso_i^{\ast k})$ for $i=1,\dotsc,l$ and $k\geq 0$, it is clear from \cref{eq:restricted_action_of_isometries,eq:restricted_action_of_adjoints_of_isometries} that  $\phi(y^\ast x)\ee_{k_1,\dotsc, k_l}$ is a multiple of $\ee_{k_1,\dotsc, k_l}$. Hence the two subspaces are orthogonal.

        The final statement is now clear.
\end{proof}

The next step is to show that the representation of $\UAl$ on each of the subspaces $L_ {k_1,\dotsc,k_n}$ is unitarily equivalent to the standard representation. We start with the necessary preparatory results for this.

\begin{lemma}\label{res:if_nonzero_then_norm_one} Take $k_1,\dotsc,k_l\geq 0$. Let $e_1,f_1,\dotsc,e_l,f_l\geq 0$ and $g_{l+1},\dotsc,g_n\in\ZZ$. Then
	\[
	\lrnorm{[\iso_1^{e_1}\iso_1^{\ast f_1}\dotsm \iso_l^{e_l}\iso_l^{\ast f_l} \uni_{l+1}^{g_{l+1}}\dotsm \uni_{n}^{g_{n}}\otimes \ee_{k_1,\dotsc,k_l}]}_\phi=1
	\]
when $0\leq f_1\leq k_1,\dotsc, 0\leq f_l\leq k_l$. Otherwise, this norm is zero.
\end{lemma}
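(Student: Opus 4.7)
The plan is to compute $\lrnorm{[w\otimes\ee_{k_1,\dotsc,k_l}]}_\phi^2$ directly from the defining formula $\ipstsp{[x\otimes h_1],[x\otimes h_2]}=\ipssp{\phi(x^\ast x)h_1,h_2}$, where
\[
w=\iso_1^{e_1}\iso_1^{\ast f_1}\dotsm \iso_l^{e_l}\iso_l^{\ast f_l}\uni_{l+1}^{g_{l+1}}\dotsm \uni_n^{g_n}.
\]
So I need to reduce $w^\ast w$ to a concrete element of $\IPAl$, then evaluate $\phi=\resstrep\circ\theta$ on it.

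First, I would simplify $w^\ast w$ inside $\IUAl$. Expanding,
\[
w^\ast w=\uni_n^{\ast g_n}\dotsm \uni_{l+1}^{\ast g_{l+1}}\iso_l^{f_l}\iso_l^{\ast e_l}\dotsm \iso_1^{f_1}\iso_1^{\ast e_1}\iso_1^{e_1}\iso_1^{\ast f_1}\dotsm \iso_l^{e_l}\iso_l^{\ast f_l}\uni_{l+1}^{g_{l+1}}\dotsm \uni_n^{g_n}.
\]
Using \detailedcref{res:algebraic_results}{3}, the innermost block $\iso_1^{\ast e_1}\iso_1^{e_1}=1$ cancels, leaving $\iso_1^{f_1}\iso_1^{\ast f_1}$ sandwiched in the middle. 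By \detailedcref{res:algebraic_results}{6} this range projection commutes with every $\iso_j^{\pm},\iso_j^{\ast\pm}$ for $j\neq 1$, so it may be slid out to the right. Iterating this for $i=1,\dotsc,l$ collapses the expression to
\[
w^\ast w=\uni_n^{\ast g_n}\dotsm \uni_{l+1}^{\ast g_{l+1}}\bigl(\iso_1^{f_1}\iso_1^{\ast f_1}\dotsm \iso_l^{f_l}\iso_l^{\ast f_l}\bigr)\uni_{l+1}^{g_{l+1}}\dotsm \uni_n^{g_n}.
\]
Finally, when a unitary $\uni_j^{g_j}$ is commuted past $\iso_i^{f_i}\iso_i^{\ast f_i}$ (for $i\leq l<j$) using \cref{eq:relations_in_universal_algebra,eq:automatic_relations_in_universal_algebra} and \detailedcref{res:algebraic_results}{1}, the scalar factor picked up while passing $\iso_i^{f_i}$ is cancelled by its conjugate when passing $\iso_i^{\ast f_i}$, and likewise when the matching $\uni_j^{\ast g_j}$ is commuted back. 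Hence $w^\ast w=\iso_1^{f_1}\iso_1^{\ast f_1}\dotsm \iso_l^{f_l}\iso_l^{\ast f_l}\in\IPAl$.

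Now $\theta$ acts as the identity on this element by \cref{eq:theta_on_words_two}, so
\[
\phi(w^\ast w)=\resstrep\bigl(\iso_1^{f_1}\iso_1^{\ast f_1}\dotsm \iso_l^{f_l}\iso_l^{\ast f_l}\bigr).
\]
Each $\resstrep(\iso_i^{f_i}\iso_i^{\ast f_i})$ is a projection on $\ssp$, and a direct application of \cref{eq:restricted_action_of_isometries,eq:restricted_action_of_adjoints_of_isometries} shows that
\[
\resstrep(\iso_i^{f_i}\iso_i^{\ast f_i})\ee_{k_1,\dotsc,k_l}=\begin{cases}\ee_{k_1,\dotsc,k_l} & \text{if }0\leq f_i\leq k_i,\\ 0 & \text{if }f_i>k_i,\end{cases}
\]
because the scalar factor of modulus one produced by $\resstrep(\iso_i^{\ast f_i})$ is exactly undone by $\resstrep(\iso_i^{f_i})$. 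Since these projections for different $i$ commute and act on disjoint indices, their product applied to $\ee_{k_1,\dotsc,k_l}$ returns $\ee_{k_1,\dotsc,k_l}$ when $0\leq f_i\leq k_i$ for all $i$ and returns $0$ otherwise. Substituting into $\ipssp{\phi(w^\ast w)\ee_{k_1,\dotsc,k_l},\ee_{k_1,\dotsc,k_l}}$ gives squared norm $1$ in the first case and $0$ in the second, as claimed.

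The only non-routine part is the combinatorial bookkeeping in the first paragraph: verifying that all unimodular scalars produced during the reorderings really do cancel. Once that is checked, the computation of $\phi(w^\ast w)$ and its action on the basis vector is mechanical. Notice that the exponents $e_i$ and $g_j$ play no role in the answer, which is the source of the simple dichotomy in the statement.
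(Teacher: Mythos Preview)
Your proof is correct and follows essentially the same route as the paper's: compute $w^\ast w$ algebraically, reduce it to the product of range projections $\iso_1^{f_1}\iso_1^{\ast f_1}\dotsm\iso_l^{f_l}\iso_l^{\ast f_l}$, then evaluate $\phi=\resstrep\circ\theta$ on this element and read off the result from the action on $\ee_{k_1,\dotsc,k_l}$. One small remark: your separate verification that scalars cancel when commuting the unitaries $\uni_j^{\pm g_j}$ past the projections is unnecessary, since \detailedcref{res:algebraic_results}{6} (which you already invoked) gives exact commutation of $\iso_j,\iso_j^\ast$ with $\iso_i^{f_i}\iso_i^{\ast f_i}$ for $j\neq i$, covering the unitary generators as well.
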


\begin{proof}
        Using \cref{res:algebraic_results} in the final step, we see that
        \begin{align*}
        &(\iso_1^{e_1}\iso_1^{\ast f_1}\dotsm \iso_l^{e_l}\iso_l^{\ast f_l} \uni_{l+1}^{g_{l+1}}\dotsm \uni_{n}^{g_n})^\ast\cdot (\iso_1^{e_1}\iso_1^{\ast f_1}\dotsm \iso_l^{e_l}\iso_l^{\ast f_l} \uni_{l+1}^{g_{l+1}}\dotsm \uni_{n}^{g_n})\\
        &=\uni_{n}^{\ast g_n}\dotsm \uni_{l+1}^{\ast g_{l+1}} \iso_l^{f_l}\iso_l^{\ast e_l}\dotsm \bm{\iso_1^{f_1}\iso_1^{\ast e_1} \cdot \iso_1^{e_1}\iso_1^{\ast f_1}}\dotsm \iso_l^{e_l}\iso_l^{\ast f_l} \uni_{l+1}^{g_{l+1}}\dotsm \uni_{n}^{g_n}\\
        &=\uni_{n}^{\ast g_n}\dotsm \uni_{l+1}^{\ast g_{l+1}} \iso_l^{f_l}\iso_l^{\ast e_l}\dotsm \bm{\iso_1^{f_1}\iso_1^{\ast f_1}}\dotsm \iso_l^{e_l}\iso_l^{\ast f_l} \uni_{l+1}^{g_{l+1}}\dotsm \uni_{n}^{g_n}\\
        &=\bm{\iso_1^{f_1}\iso_1^{\ast f_1}} \cdot \uni_{n}^{\ast g_n}\dotsm \uni_{l+1}^{\ast g_{l+1}} \iso_l^{f_l}\iso_l^{\ast e_l}\dotsm \iso_2^{f_2}\iso_2^{\ast e_2} \cdot \iso_2^{e_2}\iso_2^{\ast f_2}\dotsm\iso_l^{e_l}\iso_l^{\ast f_l} \uni_{l+1}^{g_{l+1}}\dotsm \uni_{n}^{g_n}.
        \end{align*}
        We apply this procedure $(l-1)$ more times, after which we use that $\uni_{n}^{\ast g_n}\dotsm \uni_{l+1}^{\ast g_{l+1}} \cdot \uni_{l+1}^{g_{l+1}}\dotsm \uni_{n}^{g_n} \cdots \uni_{n}^{g_n}=1$, so that we end up with $\iso_1^{f_1}\iso_1^{\ast f_1}\dotsm \iso_l^{f_l}\iso_l^{\ast f_l}$.
        Therefore,
        \begin{align*}
        \lVert[&\iso_1^{e_1}\iso_1^{\ast f_1}\dotsm \iso_l^{e_l}\iso_l^{\ast f_l} \uni_{l+1}^{g_{l+1}}\dotsm \uni_{n}^{g_n}\otimes \ee_{k_1,\dotsc, k_l}]\rVert_\phi^2\\
        &=\ipssp{\phi(\iso_1^{f_1}\iso_1^{\ast f_1}\dotsm \iso_l^{f_l}\iso_l^{\ast f_l}) \ee_{k_1,\dotsc, k_l}, \ee_{k_1,\dotsc, k_l}}\\
        &=\ipssp{\resstrep(\theta(\iso_1^{f_1}\iso_1^{\ast f_1}\dotsm \iso_l^{f_l}\iso_l^{\ast f_l})) \ee_{k_1,\dotsc, k_l}, \ee_{k_1,\dotsc, k_l}}\\
        &=\ipssp{\resstrep(\iso_1^{f_1}\iso_1^{\ast f_1}\dotsm \iso_l^{f_l}\iso_l^{\ast f_l}) \ee_{k_1,\dotsc, k_l}, \ee_{k_1,\dotsc, k_l}}\\
        &= \ipssp{\resstrep(\iso_1^{f_1}\iso_1^{\ast f_1})\dotsm \resstrep(\iso_l^{f_l}\iso_l^{\ast f_l}) \ee_{k_1,\dotsc, k_l}, \ee_{k_1,\dotsc,k_l}}.
        \end{align*}
        Since, for $i=1,\dotsc,l$, $\resstrep(\iso_i^{f_i}\iso_i^{\ast f_i})$ is the restriction to $H_l$ of the range projection of $\strep(s_i^{f_i})$, the statement is then obvious from \cref{eq:restricted_action_of_isometries}.
\end{proof}

\begin{proposition}\label{res:action_equivalent_to_standard_representation}\quad
        \begin{enumerate}
                \item The elements
                \[ [\iso_1^{e_1}\dotsm \iso_l^{e_l}\uni_{l+1}^{g_{l+1}}\dotsm \uni_{n}^{g_n}\otimes \ee_{0,\dotsc, 0}]
                \]
                for $e_1,\dotsc,e_l\geq 0$ and $g_{l+1},\dotsc,g_n\in\ZZ$ form an orthonormal basis of $L_{0,\dotsc, 0}$.
                \item The subrepresentation of $\pi_\phi$ of $\UAl$ on $L_{0,\dotsc, 0}$ is unitarily equivalent to the standard representation $\strep$ of $\UAl$ on $\Hl$.
        \end{enumerate}
\end{proposition}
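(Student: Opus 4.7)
The plan is to prove part (1) first and then to use it for part (2).

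For part (1), by construction $L_{0,\dotsc,0}$ is the closed linear span of the vectors
\[
[\iso_1^{e_1}\iso_1^{\ast f_1}\dotsm \iso_l^{e_l}\iso_l^{\ast f_l} \uni_{l+1}^{g_{l+1}}\dotsm \uni_n^{g_n}\otimes \ee_{0,\dotsc,0}]
\]
for $e_i,f_i\geq 0$ and $g_j\in\ZZ$. Specialising \cref{res:if_nonzero_then_norm_one} to $k_1=\dotsm=k_l=0$ shows that such a vector has norm $1$ when $f_1=\dotsm=f_l=0$ and vanishes in $H_\phi$ otherwise, so this dense spanning set of $L_{0,\dotsc,0}$ reduces to exactly the unit vectors listed in the statement. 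To obtain orthogonality, I would set $A = \iso_1^{e_1}\dotsm\iso_l^{e_l}\uni_{l+1}^{g_{l+1}}\dotsm\uni_n^{g_n}$, let $B$ be the analogous monomial with primed exponents, and compute
\[
\ipstsp{[A\otimes\ee_{0,\dotsc,0}],[B\otimes\ee_{0,\dotsc,0}]} = \ipssp{\phi(B^\ast A)\ee_{0,\dotsc,0},\ee_{0,\dotsc,0}}.
\]
Since $\phi=\resstrep\circ\theta$, and since \cref{eq:restricted_action_of_adjoints_of_isometries} implies that $\resstrep(\iso_1^{a_1}\iso_1^{\ast a_1}\dotsm\iso_l^{a_l}\iso_l^{\ast a_l})\ee_{0,\dotsc,0}$ equals $\ee_{0,\dotsc,0}$ when all $a_i=0$ and vanishes otherwise, this inner product coincides with the coefficient of $1$ in the expansion of $B^\ast A$ in the basis of $\IUAl$ from \cref{res:basis_and_injectivity}.

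Using $\iso_i^\ast\iso_i=1$ for $1\leq i\leq l$, the unitarity relations for $i>l$, and the unimodular commutation rules \cref{eq:relations_in_universal_algebra,eq:automatic_relations_in_universal_algebra} and their consequences in \cref{res:algebraic_results}, one reduces $B^\ast A$ to a unimodular scalar multiple of a single basis element $\iso_1^{a_1}\iso_1^{\ast b_1}\dotsm\iso_l^{a_l}\iso_l^{\ast b_l}\uni_{l+1}^{h_{l+1}}\dotsm\uni_n^{h_n}$ with $a_ib_i=0$ for each $i$; explicitly, $a_i=(e_i-e_i')^+$, $b_i=(e_i'-e_i)^+$, and $h_j=g_j-g_j'$. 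This basis element equals $1$ precisely when $e_i=e_i'$ for $1\leq i\leq l$ and $g_j=g_j'$ for $l+1\leq j\leq n$. Linear independence in \cref{res:basis_and_injectivity} then gives vanishing of the coefficient of $1$ off the diagonal, which together with the unit length already noted yields orthonormality and completes part (1).

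For part (2), define $U\colon\Hl\to L_{0,\dotsc,0}$ on the canonical basis of $\Hl$ by
\[
U(\e_{k_1,\dotsc,k_n}) \coloneqq [\iso_1^{k_1}\dotsm\iso_l^{k_l}\uni_{l+1}^{k_{l+1}}\dotsm\uni_n^{k_n}\otimes\ee_{0,\dotsc,0}].
\]
By part (1), $U$ sends this orthonormal basis of $\Hl$ to an orthonormal basis of $L_{0,\dotsc,0}$, and so extends uniquely to a surjective unitary. Intertwining need only be verified on each generator $\gen_i$, where one computes
\[
\pi_\phi(\gen_i)U(\e_{k_1,\dotsc,k_n}) = [\gen_i\iso_1^{k_1}\dotsm\iso_l^{k_l}\uni_{l+1}^{k_{l+1}}\dotsm\uni_n^{k_n}\otimes\ee_{0,\dotsc,0}]
\]
and moves $\gen_i$ past $\iso_1^{k_1}\dotsm\iso_{i-1}^{k_{i-1}}$ via \cref{eq:automatic_relations_in_universal_algebra}; the scalar $z_{i,1}^{k_1}\dotsm z_{i,i-1}^{k_{i-1}}$ that emerges is precisely the one appearing in \cref{eq:general_isometries}. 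Hence $\pi_\phi(\gen_i)U = U\strep(\gen_i)$ on every basis vector, which is the `action by reordering' heralded by \cref{rem:mnemonic}.

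The only nontrivial step is the reduction of $B^\ast A$ to a single basis element of $\IUAl$ in the orthogonality argument: the manipulation itself is a mechanical application of the commutation relations, but the bookkeeping of the unimodular constants and the exponents $a_i,b_i,h_j$ requires some care, and the uniqueness of the reduced form rests on the linear independence provided by \cref{res:basis_and_injectivity}.
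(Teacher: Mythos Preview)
Your proof is correct and follows essentially the same route as the paper. The only difference is in the orthogonality step of part~(1): the paper simply remarks that it ``follows easily from the definition of $\theta$'' (i.e., after normal-ordering $B^\ast A$, \cref{eq:theta_on_words_one} forces $\theta(B^\ast A)=0$ off the diagonal), whereas you spell this out by tracking the explicit reduced monomial and reading off the coefficient of $1$ via the action of $\rho$ on $\ee_{0,\dotsc,0}$; this is the same computation unpacked in slightly more detail.
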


\begin{proof}
       For the first part, we note that \cref{res:if_nonzero_then_norm_one} implies that the elements of the proposed basis have norm one, and also that they span a dense subspace of $L_{0,\dotsc,0}$ since the remaining elements in the spanning set for $L_{0,\dotsc,0}$ are all zero. Hence we need to show only that the elements of the proposed basis are pairwise orthogonal. This follows easily from the definition of $\theta$.

For the second part, we use \cref{rem:mnemonic} to see that the unitary operator that, for $e_1,\dotsc,e_l\geq 0$ and $g_{l+1},\dotsc,g_{n}\in\ZZ$, sends $[\iso_1^{e_1}\dotsm \iso_l^{e_l}\uni_{l+1}^{g_{l+1}}\dotsm \uni_{n}^{g_n}\otimes \ee_{0,\dotsc, 0}]$ to the element $\e_{e_1,\dotsc,e_l, g_{l+1},\dotsc,g_n}$ of $\Hl$, is a unitary equivalence between the two representations.
\end{proof}

Now that $L_{0,\dotsc,0}$ has been taken care of in \cref{res:action_equivalent_to_standard_representation}, 
 we turn to the general space $L_{k_1,\dotsc, k_l}$.

\begin{lemma}\label{res:existence_of_unimodular_constant} Suppose that $e_1,\dotsc,e_l$ and $k_1,\dotsc,k_l$ are such that $0\leq e_i\leq k_i$ for $i=1,\dotsc,l$. Then there exists a unimodular constant $c$ such that
        \[
        [\iso_1^{e_1}\dotsm \iso_l^{e_l}\iso_1^{\ast k_1}\dotsm \iso_l^{\ast k_l}\otimes \ee_{k_1,\dotsc, k_l}]=c[\iso_1^{\ast (k_1 -e_1)}\dotsm \iso_l^{\ast (k_l-e_l)}\otimes \ee_{k_1,\dotsc, k_l}].
        \]
\end{lemma}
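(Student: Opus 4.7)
The plan is as follows. When $l=0$ both sides equal $[1\otimes\ee_{0,\dotsc,0}]$, so take $c=1$; henceforth assume $l\ge 1$. Set
\[
A\coloneqq \iso_1^{e_1}\dotsm \iso_l^{e_l}\iso_1^{\ast k_1}\dotsm \iso_l^{\ast k_l},\qquad B\coloneqq \iso_1^{\ast (k_1-e_1)}\dotsm \iso_l^{\ast (k_l-e_l)},
\]
and, for brevity, write $v_L\coloneqq [A\otimes\ee_{k_1,\dotsc,k_l}]$ and $v_R\coloneqq [B\otimes\ee_{k_1,\dotsc,k_l}]$. The plan is to prove that $\|v_L\|=\|v_R\|=1$ in $H_\phi$ and that $\ipstsp{v_L,v_R}$ is unimodular; the equality case of the Cauchy--Schwarz inequality will then force $v_L = c\,v_R$ with $c=\ipstsp{v_L,v_R}$.

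For the norms, the commutation relations of \cref{res:algebraic_results} show that $A$ equals a unimodular scalar times $\iso_1^{e_1}\iso_1^{\ast k_1}\iso_2^{e_2}\iso_2^{\ast k_2}\dotsm \iso_l^{e_l}\iso_l^{\ast k_l}$; since such scalars do not affect the norm, \cref{res:if_nonzero_then_norm_one} (applied with $f_i=k_i$ and empty unitary block) yields $\|v_L\|=1$. The element $B$ already has the shape covered by \cref{res:if_nonzero_then_norm_one} (all $e_i=0$, with $f_i=k_i-e_i\le k_i$), so $\|v_R\|=1$ as well.

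The central step is to reduce $B^\ast A$ to an element of $\IPAl$ up to a unimodular scalar. By definition,
\[
B^\ast A = \iso_l^{k_l-e_l}\dotsm \iso_1^{k_1-e_1}\cdot \iso_1^{e_1}\dotsm \iso_l^{e_l}\cdot \iso_1^{\ast k_1}\dotsm \iso_l^{\ast k_l}.
\]
I would first collapse the adjacent same-index pair $\iso_1^{k_1-e_1}\iso_1^{e_1}=\iso_1^{k_1}$, then commute this $\iso_1^{k_1}$ rightward past $\iso_2^{e_2},\dotsc,\iso_l^{e_l}$ to meet $\iso_1^{\ast k_1}$, and finally use part (6) of \cref{res:algebraic_results} to move the resulting projection $\iso_1^{k_1}\iso_1^{\ast k_1}\in\IPAl$ out of the remaining word. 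Each swap of distinct-index generators contributes a unimodular scalar from $\TT$. Iterating the same procedure for $i=2,\dotsc,l$ yields $B^\ast A = c\cdot \prod_{i=1}^l \iso_i^{k_i}\iso_i^{\ast k_i}$ for some unimodular $c\in\TT$. Since this product lies in $\IPAl$, $\theta$ fixes it, and each $\rho(\iso_i^{k_i}\iso_i^{\ast k_i})$ fixes $\ee_{k_1,\dotsc,k_l}$ by \cref{eq:restricted_action_of_isometries,eq:restricted_action_of_adjoints_of_isometries}. Hence $\phi(B^\ast A)\ee_{k_1,\dotsc,k_l} = c\,\ee_{k_1,\dotsc,k_l}$, which gives $\ipstsp{v_L,v_R} = \ipssp{\phi(B^\ast A)\ee_{k_1,\dotsc,k_l}, \ee_{k_1,\dotsc,k_l}} = c$.

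The identity $|\ipstsp{v_L,v_R}|=1=\|v_L\|\|v_R\|$ saturates Cauchy--Schwarz, forcing $v_L=\lambda v_R$ for some scalar $\lambda$; evaluating the inner product gives $\lambda = \ipstsp{v_L,v_R}/\|v_R\|^2 = c$, as required. The main obstacle will be the bookkeeping in reducing $B^\ast A$: many distinct-index generators must be commuted past one another. However, the explicit value of the accumulated $c$ plays no role, only its unimodularity, which follows automatically from the fact that every such commutation contributes an element of $\TT$.
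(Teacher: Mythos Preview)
Your proof is correct and takes a genuinely different route from the paper's. The paper proceeds inductively on the index $i$: it first commutes $s_1^{e_1}$ rightward past $s_2^{e_2},\dotsc,s_l^{e_l}$, then shows directly that
\[
[\,s_1^{e_1}s_1^{\ast k_1}\dotsm s_l^{\ast k_l}\otimes\ee_{k_1,\dotsc,k_l}\,]=[\,s_1^{\ast(k_1-e_1)}s_2^{\ast k_2}\dotsm s_l^{\ast k_l}\otimes\ee_{k_1,\dotsc,k_l}\,]
\]
by expanding $\|\,\cdot\,\|_\phi^2$ of the difference and checking that the cross terms each equal $1$ (using $s_1^{k_1-e_1}\cdot s_1^{e_1}s_1^{\ast k_1}=s_1^{k_1}s_1^{\ast k_1}$ together with \cref{res:if_nonzero_then_norm_one}). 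One then commutes $s_1^{\ast(k_1-e_1)}$ back to the left and repeats the whole procedure for $s_2,\dotsc,s_l$.

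Your argument replaces this step-by-step reduction by a single global computation: you reorder $B^\ast A$ once (the crucial point being that all unstarred factors precede all starred ones, so nothing of the form $s_i^\ast s_i$ ever needs to be simplified) to recognise it as a unimodular multiple of $\prod_i s_i^{k_i}s_i^{\ast k_i}\in\IPAl$, read off the inner product, and finish with the equality case of Cauchy--Schwarz. This is shorter and avoids the induction entirely. The paper's approach, on the other hand, makes the mechanism of the reduction $s_i^{e_i}s_i^{\ast k_i}\rightsquigarrow s_i^{\ast(k_i-e_i)}$ at the level of equivalence classes more transparent, which is in keeping with how the surrounding argument in \cref{res:unitary_equivalence_of_summands} is organised.
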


\begin{proof}
        There exists an unimodular constant $\lambda$ such that
        \[
        [\bm{\iso_1^{e_1}}\dotsm \iso_l^{e_l}\iso_1^{\ast k_1}\dotsm \iso_l^{\ast k_l}\otimes \ee_{k_1,\dotsc, k_l}]=\lambda [\iso_2^{e_2}\dotsm \iso_l^{e_l}\bm{\iso_1^{e_1}}\iso_1^{\ast k_1}\dotsm \iso_l^{\ast k_l}\otimes \ee_{k_1,\dotsc, k_l}].
        \]
        Now $\lrnorm{[\iso_1^{\ast k_1}\dotsm \iso_l^{\ast k_l}\otimes \ee_{k_1,\dotsc, k_l}]}_\phi=1$ by \cref{res:if_nonzero_then_norm_one}.
        Since $\pi_\phi(s_1)$ is an isometry on $H_\phi$, this implies that
        \[
        \lrnorm{[\iso_1^{e_1}\iso_1^{\ast k_1}\dotsm \iso_l^{\ast k_l}\otimes \ee_{k_1,\dotsc, k_l}]}_\phi=1
        \]
        Hence, using \cref{res:if_nonzero_then_norm_one} again in the third step,
        \begin{align*}
        \big\lVert [\iso_1^{e_1} & \iso_1^{\ast k_1}\dotsm \iso_l^{\ast k_l}\otimes \ee_{k_1,\dotsc, k_l}\ -\ \iso_1^{\ast (k_1-e_1)}\iso_2^{\ast k_2}\dotsm \iso_l^{\ast k_l}\otimes \ee_{k_1,\dotsc, k_l} ]\big\rVert_\phi^2\\
        &=\lrnorm{[\iso_1^{e_1}\iso_1^{\ast k_1}\dotsm \iso_l^{\ast k_l}\otimes \ee_{k_1,\dotsc, k_l}}_\phi^2+\lrnorm{\iso_1^{\ast (k_1-e_1)}\dotsm \iso_l^{\ast k_l}\otimes \ee_{k_1,\dotsc, k_l} ]}_\phi^2\\
        &\phantom{=}\,-\ipssp{ \phi(\iso_l^{k_l}\dotsm \iso_2^{k_2}\bm{\iso_1^{k_1-e_1}\cdot\iso_1^{e_1}\iso_1^{\ast k_1}} \iso_2^{\ast k_2}\dotsm \iso_l^{\ast k_l}) \ee_{k_1,\dotsc, k_l}, \ee_{k_1,\dotsc, k_l}}\\
        &\phantom{=}\,-\overline{\ipssp{\phi(\iso_l^{k_l}\dotsm \iso_2^{k_2}\bm{\iso_1^{k_1-e_1}\cdot\iso_1^{e_1}\iso_1^{\ast k_1} }\iso_2^{\ast k_2}\dotsm \iso_l^{\ast k_l})\ee_{k_1,\dotsc, k_l}, \ee_{k_1,\dotsc, k_l}}}\\
        &=1+1-\lrnorm{\iso_1^{\ast k_1}\dotsm \iso_l^{\ast k_l}\otimes \ee_{k_1,\dotsc, k_l}}_\phi^2-\overline{\lrnorm{\iso_1^{\ast k_1}\dotsm \iso_l^{\ast k_l}\otimes \ee_{k_1,\dotsc, k_l}}_\phi^2}\\
        &=2-1-1\\
        &=0.
        \end{align*}
        Using this in the first step, and \cref{res:algebraic_results} in the second, we see that
        \begin{align*}
        [\iso_1^{e_1}\dotsm \iso_l^{e_l}\iso_1^{\ast k_1}\dotsm \iso_l^{\ast k_l}\otimes \ee_{k_1,\dotsc, k_l}]&= \lambda[\iso_2^{e_2}\dotsm \iso_l^{e_l}\iso_1^{\ast (k_1-e_1)} \iso_2^{\ast k_2}\dotsm \iso_l^{\ast k_l}\otimes \ee_{k_1,\dotsc, k_l}] \\
        &= \lambda^\prime [\iso_1^{\ast (k_1-e_1)} \iso_2^{e_2}\dotsm \iso_l^{e_l}\iso_2^{\ast k_2}\dotsm \iso_l^{\ast k_l}\otimes \ee_{k_1,\dotsc, k_l}] \nonumber
        \end{align*}
        for some unimodular constant $\lambda^\prime$.
        Now we repeat this procedure for $s_2,\dotsc,s_l$ to arrive at the result.
\end{proof}

\begin{lemma}\label{res:elements_are_orthogonal} Suppose that $e_1,\dotsc,e_l\geq 0$, $e^\prime_1,\dotsc,e_l^\prime\geq 0$,  $g_{l+1},\dotsc,g_n,\in\ZZ$, and $g_{l+1}^\prime,\dotsc,g_n^\prime\in\ZZ$ are such that
        \[
        (e_1,\dotsc,e_l,g_{l+1},\dotsc,g_n)\neq(e_1^\prime,\dotsc,e_l^\prime,g_{l+1}^\prime,\dotsc,g_n^\prime).
        \]
        Then
		\[
     [\iso_1^{e_1}\dotsm \iso_l^{e_l}\uni_{l+1}^{g_{l+1}}\dotsm \uni_{n}^{g_n} \iso_1^{\ast k_1}\dotsm \iso_l^{\ast k_l}\otimes \ee_{k_1,\dotsc, k_l}]
     \]
      and
      \[
      [\iso_1^{e_1^\prime}\dotsm \iso_l^{e_l^\prime}\uni_{l+1}^{g_{l+1}^\prime}\dotsm \uni_n^{g_n^\prime} \iso_1^{\ast k_1}\dotsm \iso_l^{\ast k_l}\otimes \ee_{k_1,\dotsc, k_l}]
      \]
      are orthogonal in $H_\phi$ for all $k_1,\dotsc,k_l\geq 0$.
\end{lemma}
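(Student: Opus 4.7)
The plan is to reduce the orthogonality assertion to the vanishing of $\theta$ applied to a specific word in the generators, and then to use the $\TT^n$-action that defines $\theta$ to see this vanishing by a degree count.

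First, I would write out the inner product explicitly. Set
\[
A\coloneqq \iso_1^{e_1}\dotsm \iso_l^{e_l}\uni_{l+1}^{g_{l+1}}\dotsm \uni_{n}^{g_n} \iso_1^{\ast k_1}\dotsm \iso_l^{\ast k_l}, \quad B\coloneqq \iso_1^{e_1^\prime}\dotsm \iso_l^{e_l^\prime}\uni_{l+1}^{g_{l+1}^\prime}\dotsm \uni_n^{g_n^\prime} \iso_1^{\ast k_1}\dotsm \iso_l^{\ast k_l}.
\]
By the definition of the semi-inner product on $\UAl\otimes \ssp$,
\[
\ipstsp{[A\otimes \ee_{k_1,\dotsc,k_l}], [B\otimes \ee_{k_1,\dotsc,k_l}]} = \ipssp{\phi(B^\ast A)\ee_{k_1,\dotsc,k_l}, \ee_{k_1,\dotsc,k_l}}.
\]
Since $\phi=\resstrep\circ\theta$ and $\resstrep$ is a unital $\ast$-representation, it suffices to prove that $\theta(B^\ast A)=0$.

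Next, I would exploit the $\TT^n$-action $\alpha_t$ that appears in the definition \cref{eq:theta} of $\theta$. Since $\alpha_t$ is an automorphism with $\alpha_t(\gen_i)=t_i\gen_i$ and $\alpha_t(\gen_i^\ast)=\overline{t_i}\gen_i^\ast$, for any word $w$ in the $\gen_i$ and their adjoints one has $\alpha_t(w)=\mu(t)w$, where $\mu(t)=\prod_{i=1}^n t_i^{d_i(w)}$ and $d_i(w)$ is the net degree of $\gen_i$ in $w$ (counting $\gen_i$ as $+1$ and $\gen_i^\ast$ as $-1$). Consequently,
\[
\theta(w) = \lrpars{\int_{\TT^n} \mu(t)\di{t}} w,
\]
and the integral vanishes unless $d_i(w)=0$ for every $i$.

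Now, computing $B^\ast A$ directly from the definitions of $A$ and $B$,
\[
B^\ast A = \iso_l^{k_l}\dotsm \iso_1^{k_1} \uni_n^{\ast g_n^\prime}\dotsm \uni_{l+1}^{\ast g_{l+1}^\prime} \iso_l^{\ast e_l^\prime}\dotsm \iso_1^{\ast e_1^\prime}\iso_1^{e_1}\dotsm \iso_l^{e_l}\uni_{l+1}^{g_{l+1}}\dotsm \uni_{n}^{g_n} \iso_1^{\ast k_1}\dotsm \iso_l^{\ast k_l},
\]
so that a direct count yields $d_i(B^\ast A)=e_i-e_i^\prime$ for $1\leq i\leq l$ and $d_j(B^\ast A)=g_j-g_j^\prime$ for $l+1\leq j\leq n$. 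The hypothesis $(e_1,\dotsc,e_l,g_{l+1},\dotsc,g_n)\neq(e_1^\prime,\dotsc,e_l^\prime,g_{l+1}^\prime,\dotsc,g_n^\prime)$ means that at least one of these net degrees is nonzero, whence $\int_{\TT^n}\mu(t)\di{t}=0$ and therefore $\theta(B^\ast A)=0$. I do not foresee a serious obstacle here; the only mild subtlety is to note that the degree computation is unaffected by the relations \cref{eq:relations_in_universal_algebra,eq:automatic_relations_in_universal_algebra}, since those relations only introduce unimodular scalars and preserve the exponents of each generator.
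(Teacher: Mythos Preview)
Your proposal is correct and follows essentially the same approach as the paper: reduce the inner product to $\ipssp{\phi(B^\ast A)\ee_{k_1,\dotsc,k_l},\ee_{k_1,\dotsc,k_l}}$, observe that $\phi=\resstrep\circ\theta$, and verify that $\theta(B^\ast A)=0$ by a degree count under the $\TT^n$-action. The paper's proof is more terse (simply asserting that ``at least one of the integrals over the one-dimensional tori equals zero''), but your explicit identification of the net degrees $d_i(B^\ast A)=e_i-e_i^\prime$ and $d_j(B^\ast A)=g_j-g_j^\prime$ is exactly what underlies that assertion.
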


\begin{proof}
        It is clear that from the definition of $\theta$ in \cref{eq:theta} that
        \[
        \theta(\iso_l^{k_l}\dotsm \iso_1^{k_1}\uni_n^{\ast g_n^\prime}\dotsm \uni_{l+1}^{\ast g_{l+1}^\prime}\iso_l^{\ast e_l^\prime}\dotsm \iso_1^{\ast e_1^\prime}\cdot \iso_1^{e_1}\dotsm \iso_l^{e_l}\uni_{l+1}^{g_{l+1}}\dotsm \uni_n^{g_n} \iso_1^{\ast k_1}\dotsm \iso_l^{\ast k_l}) =0
        \]
        because at least one of the integrals over the one-dimensional tori equals zero. The claimed orthogonality follows from this.
        \end{proof}

We can now prove the desired generalisation of \cref{res:action_equivalent_to_standard_representation}.

\begin{proposition}\label{res:unitary_equivalence_of_summands} Suppose that $k_1,\dotsc,k_l\geq 0$.
        \begin{enumerate}
                \item The elements
                \[
                [\iso_1^{e_1}\dotsm \iso_l^{e_l}\uni_{l+1}^{g_{l+1}}\dotsm \uni_{n}^{g_n} \iso_1^{\ast k_1}\dotsm \iso_l^{\ast k_l}\otimes \ee _{k_1,\dotsc,k_l}]
                \]
                for $e_1,\dotsc,e_l\geq 0$ and $g_{l+1},\dotsc,g_n\in\ZZ$ form an orthonormal basis of $L_{k_1,\dotsc, k_l}$.
                \item The bijection between orthonormal bases of $L_{0,\dotsc, 0}$ and $L_{k_1,\dotsc, k_l}$, sending
                \[
                [\iso_1^{e_1}\dotsm \iso_l^{e_l}\uni_{l+1}^{g_{l+1}}\dotsm \uni_{n}^{g_n} \otimes \ee_{0,\dotsc, 0}]
                \]
                to
                \[
                [\iso_1^{e_1}\dotsm \iso_l^{e_l}\uni_{l+1}^{g_{l+1}}\dotsm \uni_{n}^{g_n} \iso_1^{\ast k_1}\dotsm \iso_l^{\ast k_l}\otimes \ee_{k_1,\dotsc, k_l}]
                \]
                for $e_1,\dotsc,e_l\geq 0$ and $g_{l+1},\dotsc,g_n\in\ZZ$, yields an isomorphism between $L_{0,\dotsc, 0}$ and $L_{k_1,\dotsc, k_l}$ that is a unitary equivalence between the subrepresentations of $\pi_\phi$ of $\UAl$ on $L_{0,\dotsc, 0}$ and on $L_{k_1,\dotsc, k_l}$.
        \end{enumerate}
\end{proposition}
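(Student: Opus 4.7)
The proposition divides into three tasks: verify (i) that the proposed elements form an orthonormal set, (ii) that they span a dense subspace of $L_{k_1,\dotsc,k_l}$, and (iii) that the induced bijection extends to a unitary equivalence between the subrepresentations. Orthonormality is immediate: each element has norm one by \cref{res:if_nonzero_then_norm_one} applied with $f_i=k_i$ (so the constraint $0\leq f_i\leq k_i$ is met with equality), while pairwise orthogonality is precisely \cref{res:elements_are_orthogonal}.

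The substantive part is (ii), and this is where I expect the main difficulty. Starting from the defining spanning set of $L_{k_1,\dotsc,k_l}$, the elements $[\iso_1^{e_1}\iso_1^{\ast f_1}\dotsm \iso_l^{e_l}\iso_l^{\ast f_l}\uni_{l+1}^{g_{l+1}}\dotsm \uni_{n}^{g_n}\otimes \ee_{k_1,\dotsc,k_l}]$, I rewrite each one as a unimodular scalar times a proposed basis element through three reordering steps. \emph{First}, use \cref{eq:relations_in_universal_algebra,eq:automatic_relations_in_universal_algebra} together with \cref{res:algebraic_results} to push every factor $\iso_i^{\ast f_i}$ past the intervening $\iso_j^{e_j}$ and $\uni_j^{g_j}$ to the right, producing a unimodular multiple of $[\iso_1^{e_1}\dotsm \iso_l^{e_l}\uni_{l+1}^{g_{l+1}}\dotsm \uni_{n}^{g_n}\iso_1^{\ast f_1}\dotsm \iso_l^{\ast f_l}\otimes \ee_{k_1,\dotsc,k_l}]$; by \cref{res:if_nonzero_then_norm_one} we may assume $0\leq f_i\leq k_i$, since otherwise the class already vanishes. \emph{Second}, invoke \cref{res:existence_of_unimodular_constant} (with $e_i$ there replaced by $k_i-f_i$), which expresses $[\iso_1^{\ast f_1}\dotsm \iso_l^{\ast f_l}\otimes \ee_{k_1,\dotsc,k_l}]$ as a unimodular multiple of $[\iso_1^{k_1-f_1}\dotsm \iso_l^{k_l-f_l}\iso_1^{\ast k_1}\dotsm \iso_l^{\ast k_l}\otimes \ee_{k_1,\dotsc,k_l}]$; the observation in the preamble to this subsection that $[a\otimes h]=[b\otimes h]$ implies $[ya\otimes h]=[yb\otimes h]$, applied with $y=\iso_1^{e_1}\dotsm \iso_l^{e_l}\uni_{l+1}^{g_{l+1}}\dotsm \uni_{n}^{g_n}$, transfers this identity to our element. \emph{Third}, reorder the block $\iso_1^{e_1}\dotsm \iso_l^{e_l}\uni_{l+1}^{g_{l+1}}\dotsm \uni_{n}^{g_n}\iso_1^{k_1-f_1}\dotsm \iso_l^{k_l-f_l}$ into the canonical form $\iso_1^{e_1+k_1-f_1}\dotsm \iso_l^{e_l+k_l-f_l}\uni_{l+1}^{g_{l+1}}\dotsm \uni_{n}^{g_n}$, again at the cost of a unimodular constant. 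Letting $(e_i,f_i)$ range over admissible values so that $e_i+k_i-f_i$ attains every non-negative integer, we reach every proposed basis element. The delicate point throughout is that these rearrangements happen at the level of equivalence classes in $H_\phi$, so every step must be mediated by the observation on $\pi_\phi(y)$ preserving class equalities.

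For Part (2), since both sets are orthonormal bases, the proposed assignment extends linearly to a unitary $U\colon L_{0,\dotsc,0}\to L_{k_1,\dotsc,k_l}$. To verify that $U$ intertwines the two subrepresentations, it suffices to check intertwining on the generators $\iso_1,\dotsc,\iso_l,\uni_{l+1},\dotsc,\uni_n$, because $U$ is unitary and $\pi_\phi$ is a $\ast$-representation, so intertwining of adjoints is automatic. For each such generator, applying $\pi_\phi$ prepends the generator to the leading monomial, and the mnemonic of \cref{rem:mnemonic} brings it into canonical position at the cost of a unimodular constant computed from the exponents appearing in the leading monomial; since the trailing factor $\iso_1^{\ast k_1}\dotsm \iso_l^{\ast k_l}$ on $L_{k_1,\dotsc,k_l}$ sits to the right of that monomial and is not disturbed by this reordering, the same unimodular constant is produced on both sides, so $U$ intertwines generators, hence the whole action of $\UAl$.
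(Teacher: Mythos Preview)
Your proof is correct and follows essentially the same route as the paper's: orthonormality via \cref{res:if_nonzero_then_norm_one} and \cref{res:elements_are_orthogonal}, spanning by restricting to $0\leq f_i\leq k_i$, reordering the adjoints to the right, invoking \cref{res:existence_of_unimodular_constant}, and recombining---with the paper simply declaring part~(2) ``obvious'' where you supply the generator-by-generator intertwining argument. One small point of precision: for the norm-one claim you should note (as the paper does) that \cref{res:algebraic_results} is needed first to bring the element into the form to which \cref{res:if_nonzero_then_norm_one} literally applies, since the tail $\iso_1^{\ast k_1}\dotsm\iso_l^{\ast k_l}$ is not in the position that lemma assumes.
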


\begin{proof}
        For the first part, we note that \cref{res:if_nonzero_then_norm_one}, combined with \cref{res:algebraic_results}, implies that the elements of the proposed basis have norm one, and that \cref{res:elements_are_orthogonal} shows that they are pairwise orthogonal.

        Furthermore, $L_{k_1,\dotsc,k_n}$ is, by definition, the closure of the linear span of the elements
        \[
        [\iso_1^{e_1}\iso_1^{\ast f_1}\dotsm \iso_l^{e_l}\iso_l^{\ast f_l}\uni_{l+1}^{g_{l+1}}\dotsm \uni_{n}^{g_n} \otimes \ee_{k_1,\dotsc, k_l}]
        \]
        for $e_1,\dotsc,e_l\geq 0$, $f_1,\dotsc,f_l\geq 0$, and $g_{l+1},\dotsc,g_n\in\ZZ$. By \cref{res:if_nonzero_then_norm_one}, this linear span equals that of the same elements
        \[
        [\iso_1^{e_1}\iso_1^{\ast f_1}\dotsm \iso_l^{e_l}\iso_l^{\ast f_l}\uni_{l+1}^{g_{l+1}}\dotsm \uni_{n}^{g_n} \otimes \ee_{k_1,\dotsc, k_l}]
        \]
        for the smaller set of indices
        $e_1,\dotsc,e_l\geq 0$, $f_1,\dotsc,f_l\geq 0$, and $g_{l+1},\dotsc,g_n\in\ZZ$ such that $0\leq f_1\leq k_1,\dotsc,0\leq f_l\leq k_l$. \cref{res:algebraic_results} then shows that this linear span is also the linear span of the elements
        \[
        [\iso_1^{e_1}\dotsm \iso_l^{e_l}\uni_{l+1}^{g_{l+1}}\dotsm \uni_{n}^{g_n} \iso_1^{\ast f_1}\dotsm \iso_l^{\ast f_l}\otimes \ee_{k_1,\dotsc, k_l}]
        \]
        for the same set of indices
        $e_1,\dotsc,e_l\geq 0$, $f_1,\dotsc,f_l\geq 0$, and $g_{l+1},\dotsc,g_n\in\ZZ$ such that $0\leq f_1\leq k_1,\dotsc,0\leq f_l\leq k_l$. We know from
        \cref{res:existence_of_unimodular_constant} that, for some unimodular constant $c$ that depends on the indices,
        \[
        [\iso_1^{\ast f_1}\dotsm \iso_l^{\ast f_l}\otimes \ee_{k_1,\dotsc, k_l}]=c[\iso_1^{k_1-f_1}\dotsm \iso_l^{k_l-f_l} \iso_1^{\ast k_1}\dotsm \iso_l^{\ast k_l}\otimes \ee_{k_1,\dotsc, k_l}].
        \]
        Combining this with \cref{res:algebraic_results}, we see that the linear span under consideration is also the linear span of the elements
        \[
        [\iso_1^{e_1+k_1-f_1}\dotsm \iso_l^{e_l+k_l-f_l}\uni_{l+1}^{g_{l+1}}\dotsm \uni_{n}^{g_n}\iso_1^{\ast k_1}\dotsm \iso_l^{\ast k_l}\otimes\ee_{k_1,\dotsc,k_l}]
        \]
        for still the same set of indices  $e_1,\dotsc,e_l\geq 0$, $f_1,\dotsc,f_l\geq 0$, and $g_{l+1},\dotsc,g_n\in\ZZ$ such that $0\leq f_1\leq k_1,\dotsc,0\leq f_l\leq k_l$. Since these are precisely the elements of the proposed basis (but with multiple occurrences), this concludes the proof of the first part.

        The second part is then obvious.
\end{proof}

Finally, we arrive at the fundamental result of this paper.

\begin{theorem}\label{res:standard_representation_is_injective}
        The standard representation $\strep$ of $\UAl$ on $\Hl$ is injective.
    \end{theorem}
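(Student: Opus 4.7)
The plan is to harvest the work already done in \cref{subsec:standard_representation_is_injective} and assemble the conclusion in a few lines. Essentially everything non-trivial has been proved; what remains is to connect the dots.

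First, I invoke Stinespring's theorem applied to the faithful unital completely positive map $\phi:\UAl\to\bounded(\ssp)$ from \cref{res:phi_properties}. Faithfulness of $\phi$, combined with the Stinespring relation $\phi(x)=P_\ssp \pi_\phi(x) P_\ssp$, forces injectivity of the Stinespring representation $\pi_\phi$: if $\pi_\phi(x^\ast x)=0$ for some $x\in\UAl$, then $\phi(x^\ast x)=P_\ssp\pi_\phi(x^\ast x)P_\ssp=0$, whence $x^\ast x=0$ and so $x=0$. This already was observed in the text, so I can just cite it.

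Next, by \cref{res:H_is_direct_sum}, the Hilbert space $H_\phi$ decomposes as the internal Hilbert direct sum $\bigoplus_{k_1,\dotsc,k_l\geq 0}L_{k_1,\dotsc,k_l}$, with each summand reducing $\pi_\phi$. By \cref{res:action_equivalent_to_standard_representation,res:unitary_equivalence_of_summands}, every one of these subrepresentations is unitarily equivalent to the standard representation $\strep$ of $\UAl$ on $\Hl$. Consequently $\pi_\phi$ is unitarily equivalent to an inflation (a countably infinite direct sum of copies) of $\strep$.

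Finally, I conclude: if $x\in\UAl$ satisfied $\strep(x)=0$, then the inflated representation would send $x$ to $0$ as well, hence $\pi_\phi(x)=0$, and injectivity of $\pi_\phi$ would force $x=0$. Therefore $\strep$ is injective, as desired. The main obstacle to the theorem has been entirely absorbed into the preceding subsections (the construction of $\phi$ via the conditional expectation $\theta$ and the representation $\resstrep$, together with the careful orthonormal-basis analysis of the $L_{k_1,\dotsc,k_l}$); the present theorem is only a two-line epilogue.
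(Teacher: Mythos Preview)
Your proposal is correct and follows essentially the same route as the paper's own proof: both invoke the injectivity of the Stinespring representation $\pi_\phi$ (from the faithfulness of $\phi$), use \cref{res:H_is_direct_sum,res:action_equivalent_to_standard_representation,res:unitary_equivalence_of_summands} to identify $\pi_\phi$ with an inflation of $\strep$, and conclude. The only cosmetic difference is that the paper tracks $\pi_\phi(x)$ through the summands $L_{k_1,\dotsc,k_l}$ one by one, whereas you phrase it globally as ``$\pi_\phi$ is an inflation of $\strep$''; the content is identical.
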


\begin{proof}
	Suppose that $x\in\UAl$ is such that $\strep(x)=0$. Then \cref{res:action_equivalent_to_standard_representation} implies that $\pi_\phi(x)$ acts as zero on $L_{0,\dotsc, 0}$. By \cref{res:unitary_equivalence_of_summands}, $\pi_\phi(x)$ acts as zero on $L_{k_1,\dotsc, k_l}$ for all $k_1,\dotsc,k_l\geq 0$, and then
	\cref{res:H_is_direct_sum} shows that $\pi_\phi(x)=0$. Since $\pi_\phi$ is known to be injective as a consequence of the faithfulness of $\phi$, we conclude that $x=0$.
\end{proof}

\begin{remark}\label{rem:special_cases} In view of \cref{eq:general_isometries}, the injectivity of the standard representation has the following two special cases:
	\begin{enumerate}
		\item  The universal \Calgebra\ that is generated by a unitary is the $\Cstar$-subalge\-bra of $\bounded(\ell^2(\ZZ))$ that is generated by the bilateral shift on $\ell^2(\ZZ)$. Since the spectrum of the bilateral shift is $\TT$, the continuous functional calculus then yields the well-known fact that this universal \Calgebra\ is (isomorphic to) $\cont(\TT)$. Naturally, this same calculus  \emph{immediately} shows that $\cont(\TT)$ is the universal \Calgebra\ that is generated by a unitary, but in the light of the general picture in \cref{res:standard_representation_is_injective} this is not the `natural' description. As the proof of \cref{res:standard_representation_is_injective} shows, however, understanding what the `natural' answer is takes quite some effort.
		\item The universal $\Cstar$-subalgebra that is generated by an isometry is the \Cstar-subalgebra of $\bounded(\ell^2(\NN_0))$ that is generated by the unilateral shift, i.e., the Toeplitz algebra. The proofs in the literature of this well-known result (due to Coburn; see \cite{coburn:1967}) that we are aware of all use the Wold decomposition for one isometry. Although it was the simultaneous Wold decomposition that led to the definition of the standard representation in \cref{eq:general_isometries}, the decomposition itself is \emph{not} used in the present paper.
	\end{enumerate}
\end{remark}

\section{Embeddings between universal algebras}\label{sec:embeddings}


\noindent 
In the previous sections, it has become clear that the standard representation is (most) easily described using the words $s_1^{e_1}\dotsm s_l^{e_l}s_{l+1}^{g_{l+1}} \dotsm s_n^{g_n}$ in $\IUAl$. The action of a generator $s_i$ in this representation is found by `moving $s_i$ to its proper spot' in all products $s_i\cdot s_1^{e_1}\dotsm s_l^{e_l} s_{l+1}^{g_{l+1}}\dotsm s_n^{g_n}$; see \cref{rem:mnemonic}. For each enumeration $e_1,\dotsc,e_l,g_{l+1},\dotsc,g_n$ one can also choose and fix \emph{any} monomial containing precisely that many factors of the $s_i$, and then (attempt to) define a representation of $\UAl$ in an analogous manner. We shall now show that this \emph{does} yield a representation of $\UAl$, and that this representation is always unitarily equivalent to the standard representation. This fact will then help us to show that the natural unital $\ast$-homomorphisms between various algebras $\UAl$ are, in, fact, injective; see \cref{res:algebras_embed}. The details are as follows.

For each $n$-tuple $(e_1,\dotsc,e_l,g_{l+1},\dotsc,g_n)\in\NN_0^l\times \ZZ^{n-l}$, choose and fix a monomial $m_{e_1,\dotsc,e_l,g_{l+1},\dotsc,g_n}\in\IUAl$ which is a product, in any order, of $e_i$ factors $s_i$ and $g_j$ factors $u_j$ ($i=1,\dotsc,l$; $j=l+1,\dotsc,n$).
If $g_j<0$, this is to be read as $|g_j|$ factors $u_j^{-1}$.
We know from \cref{res:basis_and_injectivity} that the monomials thus obtained are linearly independent. Consequently, there are uniquely determined unimodular constants $\lambda(s_i, m_{e_1,\dotsc,e_l,g_{l+1},\dotsc,g_n})$ such that
\begin{equation}\label{eq:unimodular_constants}
s_i\cdot  m_{e_1,\dotsc,e_l,g_{l+1},\dotsc,g_n} = \lambda(s_i, m_{e_1,\dotsc,e_l,g_{l+1},\dotsc,g_n})\ m_{e_1,\dotsc,e_i+1,\dotsc,e_l,g_{l+1},\dotsc,g_n}
\end{equation}
for $i=1,\dotsc,l$, and likewise for $i=l+1,\dotsc,n$.

Let $H$ be a Hilbert space with orthonormal basis $\{b_{m_{e_1,\dotsc,e_l,g_{l+1},\dotsc,g_n}}: e_1,\dotsc,e_l\geq 0; \ g_{l+1},\dotsc,g_{n}\in\ZZ\}$ indexed by the chosen monomials.
For $i=1,\dotsc,l$, we use an anticipating notation to define an isometry $\pi(s_i)$ on $H$ by setting
\begin{equation}\label{eq:representation_via_monomials}
\pi(s_i)\,b_{m_{e_1,\dotsc,e_l,g_{l+1},\dotsc,g_n}}\coloneqq \lambda(s_i,m_{e_1,\dotsc,e_l,g_{l+1},\dotsc,g_n}) b_{m_{e_1,\dotsc,e_i+1,\dotsc,e_l,g_{l+1},\dotsc,g_n}}
\end{equation}
for $i=1,\dotsc,l$, and likewise for $i=l+1,\dotsc,n$.

Let $\{m_{e_1,\dotsc,e_l,g_{l+1},\dotsc,g_n}^\prime: e_1,\dotsc,e_l\geq 0;\ g_{l+1},\dotsc,g_n\in\ZZ\}$ denote a possibly different choice for the  monomials, yielding isometries $\pi^\prime(s_i)$ on a Hilbert space $H^\prime$ with orthonormal basis $\{b_{m_{e_1,\dotsc,e_l,g_{l+1},\dotsc,g_n}^\prime}: e_1,\dotsc,e_l\geq 0;\ g_{l+1},\dotsc,g_n\in\ZZ\}$. We proceed to show that there is a simultaneous unitary equivalence between $\pi(s_i)$ and $\pi^\prime(s_i)$ for $i=1,\dotsc,n$.

There are unimodular constants $\lambda^\prime(s_i, m_{e_1,\dotsc,e_l,g_{l+1},\dotsc,g_n}^\prime)$ such that
\begin{equation}\label{eq:structure_constants}
	s_i\cdot  m_{e_1,\dotsc,e_l,g_{l+1},\dotsc,g_n}^\prime = \lambda^\prime(s_i, m_{e_1,\dotsc,e_l,g_{l+1},\dotsc,g_n}^\prime)\ m_{e_1,\dotsc,e_i+1,\dotsc,e_l,g_{l+1},\dotsc,g_n}^\prime
\end{equation}
for $i=1,\dotsc,l$, and likewise for $i=l+1,\dotsc,n$.
Furthermore, there are unimodular constants $\mu(m_{e_1,\dotsc,e_l,g_{l+1},\dotsc,g_n}^\prime,m_{e_1,\dotsc,e_l,g_{l+1},\dotsc,g_n})$
such that
\begin{equation}\label{eq:unimodular_constants_again}
	m_{e_1,\dotsc,e_l,g_{l+1},\dotsc,g_n}^\prime
=\mu(m_{e_1,\dotsc,e_l,g_{l+1},\dotsc,g_n}^\prime,m_{e_1,\dotsc,e_l,g_{l+1},\dotsc,g_n})\ m_{e_1,\dotsc,e_l,g_{l+1},\dotsc,g_n}.
\end{equation}
Inserting \cref{eq:unimodular_constants_again} into the left hand side of \cref{eq:structure_constants}, and using \cref{eq:unimodular_constants}, we find, for $i=1,\cdots,l$, that
\begin{align*}
&\lambda(s_i, m_{e_1,\dotsc,e_l,g_{l+1},\dotsc,g_n})\ \mu(m_{e_1,\dotsc,e_l,g_{l+1},\dotsc,g_n}^\prime,m_{e_1,\dotsc,e_l,g_{l+1},\dotsc,g_n})\\
&\phantom{=}\,\cdot  \ m_{e_1,\dotsc,e_i+1,\dotsc,e_l,g_{l+1},\dotsc,g_n}\\
&=\lambda^\prime(s_i, m_{e_1,\dotsc,e_l,g_{l+1},\dotsc,g_n}^\prime)\ m_{e_1,\dotsc,e_i+1,\dotsc,e_l,g_{l+1},\dotsc,g_n}^\prime\\
&=\lambda^\prime(s_i, m_{e_1,\dotsc,e_l,g_{l+1},\dotsc,g_n}^\prime)\ \mu (m_{e_1,\dotsc,e_i+1,\dotsc,e_l,g_{l+1},\dotsc,g_n}^\prime,m_{e_1,\dotsc,e_i+1,\dotsc,e_l,g_{l+1},\dotsc,g_n})\\
&\phantom{=}\,\cdot m_{e_1,\dotsc,e_i+1,\dotsc,e_l,g_{l+1},\dotsc,g_n},
\end{align*}
where \cref{eq:unimodular_constants_again} was used in the last step.
Hence
\begin{align*}	&\lambda(s_i, m_{e_1,\dotsc,e_l,g_{l+1},\dotsc,g_n})\ \mu(m_{e_1,\dotsc,e_l,g_{l+1},\dotsc,g_n}^\prime,m_{e_1,\dotsc,e_l,g_{l+1},\dotsc,g_n})
\\
& =\lambda^\prime(s_i, m_{e_1,\dotsc,e_l,g_{l+1},\dotsc,g_n}^\prime)\ \mu (m_{e_1,\dotsc,e_i+1,\dotsc,e_l,g_{l+1},\dotsc,g_n}^\prime,m_{e_1,\dotsc,e_i+1,\dotsc,e_l,g_{l+1},\dotsc,g_n})
\end{align*}
for $i=1,\dotsc,l$, and likewise for $i=l+1,\dotsc,n$.

Using this equality, it is then easily seen that the isomorphism between $H^\prime$ and $H$ such that
\[ b_{m_{e_1,\dotsc,e_l,g_{l+1},\dotsc,g_n}^\prime}^\prime \mapsto\mu(m_{e_1,\dotsc,e_l,g_{l+1},\dotsc,g_n}^\prime,m_{e_1,\dotsc,e_l,g_{l+1},\dotsc,g_n})\ b_{m_{e_1,\dotsc,e_l,g_{l+1},\dotsc,g_n}}
\]
is a unitary equivalence between $\pi^\prime(s_i)$ and $\pi(s_i)$ for $i=1,\dotsc,n$.

For our standard choice $m_{e_1,\dotsc,e_l,g_{l+1},\dotsc,g_n}=s_1^{e_1}\dotsm s_l^{e_l}s_{l+1}^{g_{l+1}} \dotsm s_n^{g_n}$ for the monomials, the above definitions of the $\pi(s_i)$ are known to give the standard representation of $\UAl$ (see \cref{rem:mnemonic}). In view of the existing unitary equivalence, we now see that every choice for the monomials gives a representation of $\UAl$ via \cref{eq:representation_via_monomials} that is unitarily equivalent to the standard representation.

With this available, we can now prove the following theorem, showing that the natural unital $\ast$-homomorphisms\textemdash when they exist\textemdash between the universal \Calgebras\ in this paper are embeddings.

\begin{theorem}\label{res:algebras_embed}
Suppose that $n\geq 0$, and that $z_{ij}\in\TT$ for $i,j=1,\dots,n$ are such that $z_{ji}=\overline{z}_{ij}$ for all $i,j=1,\dotsc,n$ with $i\neq j$. Suppose that $0\leq l\leq n$, and let ${\mathcal A}$ denote the universal unital \Calgebra\ that is generated by isometries $s_1,\dotsc,s_l$ and unitaries $s_{l+1},\dotsc,s_n$ such that $s_i^\ast s_j=\overline{z}_{ij}s_j s_i^\ast$ for $i,j=1,\dotsc,n$ with $i\neq j$.

Suppose that $n^\prime\geq n$, and that $z_{ij}^\prime\in\TT$ for $i,j=1,\dots,n^\prime$ are such that $z_{ji}^\prime=\overline{z\sp\prime}_{\!ij}$ for all $i,j=1,\dotsc,n^\prime$ with $i\neq j$ and $z^\prime_{ij}=z_{ij}$ for all $i,j=1,\dotsc,n$ with $i\neq j$. Suppose that $n\leq k\leq n^\prime$, and let ${\mathcal A}^\prime$ denote the universal unital \Calgebra\ that is generated by isometries $s_1^\prime,\dots,s_l^\prime$, unitaries $s_{l+1}^\prime,\dots,s_n^\prime$, isometries $s_{n+1}^\prime,\dotsc s_k^\prime$, and unitaries $s_{k+1}^\prime,\dotsc,s_{n^\prime}^\prime$ such that $s_i^{\prime\ast} s_j^\prime=\overline{z}_{ij}s_j s_i^{\prime \ast}$ for $i,j=1,\dotsc,n^\prime$ with $i\neq j$.

Then the unique unital $\ast$-homomorphism $\psi:{\mathcal A}\to{\mathcal A}^\prime$ such that $\psi(s_i)=s_i^\prime$ for $i=1,\dotsc,n$ is injective.
\end{theorem}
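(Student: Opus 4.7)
The plan is to reduce the injectivity of $\psi$ to the already established injectivity of the standard representation of $\mathcal{A}$ (\cref{res:standard_representation_is_injective}), by producing a representation of $\mathcal{A}'$ whose composition with $\psi$ has an invariant subspace on which it coincides, up to unitary equivalence, with the standard representation of $\mathcal{A}$.

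Concretely, I would exploit the monomial flexibility exhibited at the beginning of this section. For each tuple $(a_1,\dotsc,a_{n'})$ in $\NN_0^l \times \ZZ^{n-l} \times \NN_0^{k-n} \times \ZZ^{n'-k}$, choose the monomial $m'_{a_1,\dotsc,a_{n'}} \coloneqq (s_1')^{a_1}\dotsm (s_{n'}')^{a_{n'}}$, in the natural order of the generators of $\mathcal{A}'$. This yields a representation $\pi'$ of $\mathcal{A}'$ on a Hilbert space $H'$ with orthonormal basis $\set{b_{m'_{a_1,\dotsc,a_{n'}}}}$, and the general discussion preceding the statement shows that $\pi'$ is unitarily equivalent to the standard representation of $\mathcal{A}'$, hence injective by \cref{res:standard_representation_is_injective}. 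Let $H_0 \subseteq H'$ denote the closed span of those basis vectors for which $a_{n+1} = \dotsb = a_{n'} = 0$.

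The crux then consists of two claims: (i) $H_0$ is invariant under $\pi'(\psi(s_i))$ and $\pi'(\psi(s_i))^\ast$ for $i=1,\dotsc,n$; and (ii) the resulting subrepresentation of $\mathcal{A}$ on $H_0$, transported along the obvious bijection $b_{m'_{a_1,\dotsc,a_n,0,\dotsc,0}} \leftrightarrow \e_{a_1,\dotsc,a_n}$, agrees with the standard representation of $\mathcal{A}$ on $\Hl$. For (i), the action of $\pi'(s_i')$ is immediate from \cref{eq:representation_via_monomials}: it simply increments $a_i$ by one up to a unimodular scalar, which preserves the condition that the last $n'-n$ exponents vanish. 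For the adjoint, since $\pi'(s_i')$ is a scalar multiple of a basis-to-basis bijection, its adjoint either annihilates $b_{m'_{a_1,\dotsc,a_n,0,\dotsc,0}}$ or sends it to a unimodular scalar times $b_{m'_{a_1,\dotsc,a_i-1,\dotsc,a_n,0,\dotsc,0}}$, again preserving $H_0$. For (ii), the unimodular constant that arises when one moves $s_i'$ to its correct spot in $m'_{a_1,\dotsc,a_n,0,\dotsc,0}$ is $(z'_{i,1})^{a_1}\dotsm (z'_{i,i-1})^{a_{i-1}}$, which matches the constant $z_{i,1}^{a_1}\dotsm z_{i,i-1}^{a_{i-1}}$ of the standard representation of $\mathcal{A}$ because $z'_{ij} = z_{ij}$ for $i,j \leq n$ by hypothesis.

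With (i) and (ii) in hand, the conclusion is swift: if $\psi(x) = 0$, then $(\pi'\circ\psi)(x)$ vanishes on $H'$ and in particular on $H_0$; by (ii), the standard representation of $\mathcal{A}$ then sends $x$ to zero, whence $x = 0$ by \cref{res:standard_representation_is_injective}. The main technical obstacle is a clean verification of claim (i) for the adjoint action, since a direct algebraic reduction of $s_i^{\prime \ast}\cdot (s_1')^{a_1}\dotsm (s_{n'}')^{a_{n'}}$ inside $\mathcal{A}'$ is cumbersome; the basis-to-basis argument above, which relies on the unitary equivalence of $\pi'$ with the standard representation of $\mathcal{A}'$, bypasses this difficulty entirely.
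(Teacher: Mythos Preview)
Your proposal is correct and follows essentially the same approach as the paper: you realise the standard representation of $\mathcal{A}'$ via a choice of monomials in which the generators $s_1',\dotsc,s_n'$ come first, identify a reducing subspace (your $H_0$, which is the paper's $L_{m'}'$ for $m'=1$) on which $\pi'\circ\psi$ coincides with the standard representation of $\mathcal{A}$, and conclude injectivity of $\psi$ from \cref{res:standard_representation_is_injective}. The paper leaves the verification that $H_0$ is reducing to ``a moment's thought'', whereas you spell out the adjoint argument explicitly; both are fine.
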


\begin{proof}
We us the result and notation from the discussion preceding the theorem for the standard representations of both ${\mathcal A}^\prime$ and ${\mathcal A}$. We start with ${\mathcal A}^\prime$, where we have the freedom to choose monomials as we see fit, provided that all necessary combinations of numbers of factors are covered. We make a choice where the generators $s_1^\prime,\dotsc,s_n^\prime$ that `correspond to ${\mathcal A}$' always come first. That is, we make a choice such that the chosen monomials for ${\mathcal A}^\prime$ are those of the form $m_{\mathcal A}^\prime m^\prime$, where $m_{\mathcal A}^\prime$ runs through a fixed choice of monomials as required for $\mathcal A$ (but then in the $s_1^\prime,\dotsc,s_n^\prime)$, and $m^\prime$ runs through a fixed choice for the required monomials in $s_{n+1}^\prime,\dotsc,s_{n^\prime}^\prime$. The standard representation $\pi^\prime$ of ${\mathcal A}^\prime$ can then be realised on a Hilbert space $H^\prime$ with an orthonormal basis consisting of elements $b^\prime_{m_{\mathcal A}^\prime m^\prime}$ that are indexed by the monomials $m_{\mathcal A}^\prime m^\prime$ from our choice. Fix any such monomial $m^\prime$ in $s_{n+1}^\prime,\dotsc,s_{n^\prime}^\prime$ from our choice. A moment's thought shows that the closed linear span $L_{m^\prime}^\prime$ in $H^\prime$ of the basis vectors $b_{m_{\mathcal A}^\prime m^\prime}^\prime$, where $m_{\mathcal A}^\prime$ runs through our fixed choice of monomials as required for $\mathcal A$ (but then in the $s_1^\prime,\dotsc,s_n^\prime)$, is a reducing subspace for $\pi^\prime(s_1^\prime),\dotsc,\pi(s_n^\prime)^\prime$. Moreover, using the material preceding the theorem for the standard representation of $\mathcal A$, it is immediate that the representation $\pi^\prime\circ\psi$ of $\mathcal A$ on $L_{m^\prime}^\prime$ is unitarily equivalent to the standard representation of $\mathcal A$. Since the latter is injective, so is $\psi$.
\end{proof}

\section{Rieffel deformation}\label{sec:deformation}


\noindent
In this section, we show that $\UAl$ is isomorphic to a Rieffel deformation of $\mathcal{A}_{\{1\};l,n-l}$. This implies that $\UAl$ is nuclear, which is a particular case of \cite[Theorem~ 6.2]{rakshit_sarkar_suryawanshi:2022}. It also allows us to compute its K-theory, thus arriving at the results in \cite{bhatt_saurabh_UNPUBLISHED:2023}. At least where the K-theory is concerned, this approach is computationally less demanding than the one in \cite{bhatt_saurabh_UNPUBLISHED:2023}.

We start with a brief review. 
Let $A$ be a \Calgebra, supplied with a strongly continuous action $\alpha$ of $\RR^n$, and denote by $A^\infty$ the set of $a\in A$ such that $x \mapsto \alpha_x(a)$ is a $\mathrm{C}^\infty$\!-function. It is a dense $\ast$-subalgebra of $A$ which is invariant under $\alpha$. Let $\Theta$ be a real skew-symmetric $n\times n$ matrix.  In Rieffel's deformation theory (see \cite{rieffel:1993a}), using oscillatory integrals, one introduces a product $(a,b)\mapsto a\cdot_\Theta b$ on $A^\infty$
such that, with the original involution, $A^\infty$ is a $\ast$-algebra. The $\ast$-algebra $(A^\infty, \cdot_\Theta)$ admits a $\mathrm{C}^\ast$-completion $A_\Theta$ in a ${\mathrm C}^\ast$-norm $\norm{\,\cdot\,}_\Theta$, defined by Hilbert module techniques, such that the action on $A^\infty$ extends to a strongly continuous action of $\RR^n$ on the ${\mathrm C}^\ast$-algebra $A_\Theta$ with $A_\Theta^\infty=A^\infty$.

In our case, we are interested in a periodic action of $\RR^n$, i.e., an action $\alpha$ of $\TT^n$. We identify $\widehat{\TT}^n$ and  $\ZZ^n$ by letting $p=(p_1,...,p_n)\in \ZZ^n$ 
correspond to the character $\chi_p\colon\TT^n\to\TT$, defined by setting $\chi_p(t_1,...,t_n)\coloneqq t_1^{p_1}...t_n^{p_n}$ for $(t_1,\dotsc,t_n)\in\TT^n$. We let $e_1,\dotsc,e_n$ denote the canonical generators of $\ZZ^n$. For $p\in\ZZ^n$, set
\[ 
A_p = \{ a \in A: \alpha_t(a) = \chi_p(t)a \text{ for } t \in \TT^n \} .
\]
Clearly, $A_p\subset A^\infty$. As $A^\infty=A^\infty_\Theta$, we have $A_p=(A_\Theta)_p$ for $p\in\ZZ^n$.  Since $A_p A_q \subset A_{p+q}$ and $A_p^\ast \subset A_{-p}$ for $p, q \in \ZZ^n$, the spaces $A_p$ for  $p \in \ZZ^n$ are the homogeneous components of a
$\ZZ^n$-grading on the $\ast$-subalgebra of $A$ that they span. The general theory of representations of compact groups on Banach spaces implies that
\begin{equation}\label{eq:dense_span}
A = \overline{\bigoplus_{p \in \ZZ^n} A_p}.
\end{equation}

From \cite[Proposition 2.22]{rieffel:1993a}, we have the following explicit formula for the deformed product of homogeneous elements. In it, $\langle\,\cdot\,,\,\cdot\,\rangle$ denotes the usual inner product on $\RR^n$.

\begin{proposition}\label{prop:rieffelprop2.22}
Let $A$ be a $C^\ast$-algebra with a $\TT^n$-action. For $a \in A_p$ and $b \in A_q$, we have
\begin{equation*}
a\cdot_\Theta b = e^{2 \pi i \langle \Theta(p), q \rangle}\,ab.
\end{equation*}
\end{proposition}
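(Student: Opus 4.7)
The plan is to deduce the formula directly from Rieffel's defining oscillatory integral for the deformed product, since \cref{prop:rieffelprop2.22} is essentially a direct specialisation of \cite[Proposition~2.22]{rieffel:1993a} to the case of a periodic $\RR^n$-action. Once the right substitutions are made, the verification is a one-line computation.

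I would first note that, after identifying $\TT^n$ with $\RR^n/\ZZ^n$, the strongly continuous $\TT^n$-action $\alpha$ lifts to a periodic $\RR^n$-action for which every $a\in A_p$ is an eigenvector satisfying $\alpha_x(a)=e^{2\pi \upi\angles{p,x}}a$ for all $x\in\RR^n$. In particular $A_p\subseteq A^\infty$, so homogeneous elements automatically lie in the domain on which the deformed product is defined. Rieffel's defining formula then reads
\[
a\cdot_\Theta b=\iint_{\RR^n\times\RR^n}\alpha_{\Theta u}(a)\,\alpha_v(b)\,e^{2\pi \upi \angles{u,v}}\di u\di v,
\]
interpreted as an oscillatory integral.

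Substituting the eigenvector relations for $a\in A_p$ and $b\in A_q$, and using the skew-symmetry of $\Theta$ to rewrite $\angles{p,\Theta u}=-\angles{\Theta p,u}$, the operator product $ab$ factors out, reducing the task to evaluating the scalar oscillatory integral
\[
\iint_{\RR^n\times\RR^n}e^{2\pi \upi(\angles{u,v}-\angles{\Theta p,u}+\angles{q,v})}\di u\di v.
\]
The inner $v$-integration, in the sense of Fourier inversion on $\RR^n$, produces a $\delta$-distribution supported at $u=-q$; the subsequent $u$-integration then collapses to evaluation at $u=-q$, yielding the scalar $e^{2\pi \upi \angles{\Theta(p),q}}$ multiplying $ab$, which is exactly the identity to be proved.

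The only delicate step is the rigorous manipulation of the oscillatory integrals, as neither absolute nor iterated convergence is classical. However, this is precisely the analytic machinery set up in \cite[Chapter~1]{rieffel:1993a}, and since $a$ and $b$ are smooth vectors the phase function is manageable in that framework; hence this step requires no new analysis beyond invoking Rieffel's results. In practice, therefore, the proof consists of citing \cite[Proposition~2.22]{rieffel:1993a} together with the observation that our periodic action lies within its hypotheses.
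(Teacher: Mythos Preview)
Your proposal is correct and matches the paper's treatment: the paper does not prove this proposition at all but simply quotes it as \cite[Proposition~2.22]{rieffel:1993a}, exactly as you conclude in your final sentence. Your sketch of the oscillatory-integral computation is a reasonable (and accurate) gloss on Rieffel's argument, but none of it is needed here beyond the citation.
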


We can conclude from this that $\norm{a}_{\Theta}=\norm{a}$ for $p\in\ZZ^n$ and $a\in A_p$. For $p=0$ this is clear from \cite[p.35]{rieffel:1993a} since (in the notation of \cite{rieffel:1993a}) $L_a^J=L_a^0$ for such $a$. For general $p$ and $a\in A_p$, we then have
\[
\norm{a}_\Theta^2=\norm{a\cdot_\Theta a^\ast}_\Theta=\norm{aa^\ast}_\Theta=\norm{aa^\ast}=\norm{a}^2.
\]

Furthermore, combining \cite[Theorem 2.8]{bucholz_lechner_summers:2011} for actions of $\RR^n$ which are periodic in each variable with \cref{prop:rieffelprop2.22}, we have the following. The final statement on injectivity is not needed in the present paper. 

\begin{proposition}\label{res:representation_of_deformation}
Let $A$ be a $C^\ast$-algebra with a $\TT^n$-action $\alpha$, let $\rho$ be a unitary representation of $\TT^n$ on a Hilbert space $H$, and let $\pi$ be a representation of $A$ on $H$ such that 
\[
\pi(\alpha_t(a))=\rho_t \pi(a)\rho_t^\ast
\]
for $a\in A$ and $t\in \TT^n$. For $p\in\ZZ^n$, set

\[
H_p\coloneqq \{\xi\in H : \rho(t)\xi=\chi_p(t)\xi\text{ for }t\in\TT^n\}.
\]
There exists a unique  representation $\pi_\Theta$ of $A_\Theta$ on $H$ such that 
\[
\pi_\Theta(a) \xi = e^{2 \pi i \langle \Theta(p), q \rangle} \pi(a) \xi
\]
for $\xi \in H_q$, $a \in A_p$, and $p,q\in \ZZ^n$.
Moreover, $\pi_\Theta$ is injective if and only if $\pi$ is injective.
\end{proposition}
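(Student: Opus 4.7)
My plan is to obtain the existence and uniqueness of $\pi_\Theta$ from the cited results of Buchholz--Lechner--Summers, and then to establish the injectivity equivalence by a Fourier-theoretic reduction to homogeneous elements. Since the formula in the statement determines $\pi_\Theta$ on the span $\bigoplus_{p}A_p$, which is dense in $A_\Theta$ (cf.\ \cref{eq:dense_span} applied to the $\TT^n$-action extended to $A_\Theta$), existence and uniqueness reduce to checking that the prescription extends continuously to a $\ast$-representation of $A_\Theta$; this is \cite[Theorem~2.8]{bucholz_lechner_summers:2011} combined with \cref{prop:rieffelprop2.22}.

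For the non-trivial part, the injectivity equivalence, I would first record a homogeneous reduction: for $a\in A_p$ and $\xi\in H_q$, the defining formula gives $\pi_\Theta(a)\xi=e^{2\pi i\langle\Theta(p),q\rangle}\pi(a)\xi$, and the unimodular prefactor yields $\pi_\Theta(a)\xi=0\iff\pi(a)\xi=0$. Since $\rho$ is a unitary representation of the compact group $\TT^n$, Peter--Weyl ensures that $H$ is the Hilbert direct sum of the $H_q$, so that for $a\in A_p$ one has $\pi_\Theta(a)=0\iff\pi(a)=0$.

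The next step is to bootstrap this homogeneous identity using the Fourier-coefficient maps $E_p\colon A_\Theta\to A_p$ defined by $E_p(a)=\int_{\TT^n}\overline{\chi_p(t)}\,\alpha_t(a)\di t$. These are well-defined because the original $\TT^n$-action extends strongly continuously to $A_\Theta$ with the same isotypic subspaces $A_p=(A_\Theta)_p$. A short calculation, starting from the covariance $\pi_\Theta(\alpha_t(a))=\rho_t\pi_\Theta(a)\rho_t^\ast$ (first verified on homogeneous $a$ from the defining formula and then extended by continuity) and decomposing $\rho_t\pi_\Theta(a)\xi$ into its components in the various $H_r$, should yield the intertwining identity
\[
\pi_\Theta(E_p(a))\xi=P_{p+q}\,\pi_\Theta(a)\xi\quad\text{for all }\xi\in H_q,
\]
where $P_{p+q}$ denotes the orthogonal projection of $H$ onto $H_{p+q}$.

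Both directions of the biconditional now fall out. If $\pi$ is injective and $\pi_\Theta(a)=0$, the intertwining identity forces $\pi_\Theta(E_p(a))=0$ for every $p$; the homogeneous case then gives $\pi(E_p(a))=0$, whence $E_p(a)=0$ and ultimately $a=0$. The converse is symmetric: if $a\neq 0$ satisfies $\pi(a)=0$, then some Fourier coefficient $E_p(a)\in A_p$ is non-zero and the homogeneous case delivers $\pi_\Theta(E_p(a))=0$. The main obstacle I anticipate lies in the routine but delicate verification of the covariance identity for all $a\in A_\Theta$ together with the Peter--Weyl-type convergence needed to conclude that an element of $A_\Theta$ annihilated by every $E_p$ must vanish; both rest on the fact that the $\TT^n$-action on $A_\Theta$ shares its isotypic decomposition with that on $A$, so the standard compact-group machinery transfers verbatim.
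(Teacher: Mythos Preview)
The paper does not actually prove this proposition: it is stated as a direct consequence of \cite[Theorem~2.8]{bucholz_lechner_summers:2011} (specialised to periodic $\RR^n$-actions) combined with \cref{prop:rieffelprop2.22}, and the authors explicitly remark that the injectivity statement is not needed for their purposes. Your treatment of existence and uniqueness therefore coincides with the paper's approach, since you invoke the same two ingredients.

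Where you go beyond the paper is in supplying a self-contained argument for the injectivity equivalence. Your strategy is sound: the homogeneous reduction $\pi_\Theta(a)=0\iff\pi(a)=0$ for $a\in A_p$ is immediate from the unimodularity of the phase and the Peter--Weyl decomposition of $H$; the covariance $\pi_\Theta(\alpha_t(a))=\rho_t\pi_\Theta(a)\rho_t^\ast$ does hold (it suffices to check it on homogeneous $a$, where $\alpha_t(a)=\chi_p(t)a$ and $\pi(a)H_q\subset H_{p+q}$ makes the verification a one-line character computation); and your intertwining identity $\pi_\Theta(E_p(a))\xi=P_{p+q}\,\pi_\Theta(a)\xi$ for $\xi\in H_q$ follows by integrating the covariance against $\overline{\chi_p}$. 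The ``symmetric'' converse is correct once one notes that the \emph{same} intertwining identity holds for $\pi$ itself (from the hypothesised covariance of $\pi$), so that $\pi(a)=0$ forces $\pi(E_p(a))=0$ for every $p$, and then the homogeneous equivalence transfers this to $\pi_\Theta$. The residual point you flag---that $E_p(a)=0$ for all $p$ implies $a=0$---is standard (Ces\`aro/Fej\'er summation of the isotypic decomposition for a strongly continuous compact-group action). So your argument is correct and more detailed than what the paper offers; the paper simply outsources the whole proposition to the literature.
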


After these preparations, we now set out to prove that $\UAl$ is isomorphic to a Rieffel deformation of $\mathcal{A}_{\{1\};l,n-l}$. In order to keep our notation straight, we denote the generating isometries of the former algebra by the usual $s_1,\dotsc,s_n$ and those of the latter by $s_1^\prime,\dotsc,s_n^\prime$. Likewise, we let $\strep$ denote the standard representation of $\UAl$ on $H_{l,n-l}$ and $\strep^\prime$ the standard representation of $\mathcal{A}_{\{1\};l,n-l}$ on the same Hilbert space.

There is an action $\alpha$ of $\TT^n$ on $\mathcal{A}_{\{1\};l,n-l}$ which is determined by
\[
\alpha_{(t_1,\dotsc,t_n)}( s_i^\prime )= t_i s_i^\prime
\]
for $i=1,\dots,n$. For $i,j$ with $1\leq i<j\leq n$, choose $\varphi_{ij}\in\RR$ such that $z_{ij} = e^{2\pi {\mathrm i} \varphi_{ij}}$, and set
\begin{equation}\label{theta_q}
\Theta  \coloneqq \left( \begin{array}{cccc}
    0 & -\frac{\varphi_{12}}{2} & \dotsc & -\frac{\varphi_{1n}}{2}  \\
    \frac{\varphi_{12}}{2} & 0 & \dotsc & -\frac{\varphi_{2n}}{2} \\
    \vdots &  & \ddots & \\
    \frac{\varphi_{1n}}{2} & \frac{\varphi_{2n}}{2} & \dotsc & 0
\end{array} \right).
\end{equation}
Explicitly, set
\[
\Theta_{ij}\coloneqq
\begin{cases}
	\phantom{-}0&\text{when }i=j;\\
	\phantom{-}\frac{\varphi_{ji}}{2} &\text{when }i>j;\\
	-\frac{\varphi_{ij}}{2} &\text{when }i<j.
	\end{cases}
\]

\begin{proposition}\label{res:surjective_morphism}
There exists a unique unital $\ast$-homomorphism $\Psi: \UAl \rightarrow (\mathcal{A}_{\{1\}; l,n-l})_\Theta$ such that
\[ 
\Psi(s_i) = s_i^\prime
\]
for $i = 1,\dotsc, n$. It is surjective.
\end{proposition}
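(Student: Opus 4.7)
The strategy is first to produce $\Psi$ via the universal property of $\UAl$, and then to argue surjectivity by reducing it to density of the monomial span in the deformed algebra.

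For existence, the key observation is that in $A\coloneqq\mathcal{A}_{\{1\};l,n-l}$ the generators are eigenvectors for the $\TT^n$-action: $s_i^\prime\in A_{e_i}$ and $s_i^{\prime\ast}\in A_{-e_i}$ for $i=1,\dotsc,n$, where $e_1,\dotsc,e_n$ denotes the standard basis of $\ZZ^n$. Since all the structure constants defining $A$ equal $1$, the generators $s_1^\prime,\dotsc,s_n^\prime$ pairwise commute in $A$, and \cref{prop:rieffelprop2.22} gives
\[
s_i^\prime\cdot_\Theta s_j^\prime = e^{2\pi {\mathrm i}\,\Theta_{ji}}\,s_i^\prime s_j^\prime \quad\text{and}\quad s_j^\prime\cdot_\Theta s_i^\prime = e^{2\pi {\mathrm i}\,\Theta_{ij}}\,s_i^\prime s_j^\prime
\]
for $i\neq j$. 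Direct inspection of \cref{theta_q} shows that $\Theta_{ji}-\Theta_{ij}=\varphi_{ij}$ when $i<j$ and $\Theta_{ji}-\Theta_{ij}=-\varphi_{ji}$ when $i>j$; in either case $e^{2\pi {\mathrm i}(\Theta_{ji}-\Theta_{ij})}=z_{ij}$, which is precisely \cref{eq:automatic_relations_in_universal_algebra} for the deformed product. An entirely analogous computation, using that $s_i^{\prime\ast} s_j^\prime=s_j^\prime s_i^{\prime\ast}$ in $A$, yields \cref{eq:relations_in_universal_algebra}. Finally, $\Theta$ has vanishing diagonal, so $s_i^{\prime\ast}\cdot_\Theta s_i^\prime=s_i^{\prime\ast} s_i^\prime=1$ for $i=1,\dotsc,l$, and additionally $s_i^\prime\cdot_\Theta s_i^{\prime\ast}=1$ for $i=l+1,\dotsc,n$, giving \cref{eq:isometries_in_universal_algebra,eq:unitaries_in_universal_algebra}. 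The universal property of $\UAl$ then produces the required unital $\ast$-homomorphism $\Psi$, and uniqueness is immediate from its values on the generators.

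For surjectivity, let $B\coloneqq\Psi(\UAl)$, which is automatically a closed $\ast$-subalgebra of $(\mathcal{A}_{\{1\};l,n-l})_\Theta$. Iterating \cref{prop:rieffelprop2.22}, every deformed product of generators $s_i^\prime$ and their adjoints equals a nonzero scalar multiple of the corresponding undeformed monomial, so $B$ contains the linear span $L$ of all undeformed monomials in $s_1^\prime,\dotsc,s_n^\prime,s_1^{\prime\ast},\dotsc,s_n^{\prime\ast}$. It therefore suffices to show that $L$ is $\|\cdot\|_\Theta$-dense in $(\mathcal{A}_{\{1\};l,n-l})_\Theta$. Now $L$ is $\|\cdot\|$-dense in $A$ by construction, and the contractive projection $a\mapsto \int_{\TT^n}\overline{\chi_p(t)}\alpha_t(a)\di{t}$ from $A$ onto $A_p$ sends $L$ into $L\cap A_p$, so $L\cap A_p$ is $\|\cdot\|$-dense in $A_p$ for every $p\in\ZZ^n$; as $\|\cdot\|$ and $\|\cdot\|_\Theta$ coincide on each $A_p$ (noted after \cref{prop:rieffelprop2.22}), the same density holds in $\|\cdot\|_\Theta$. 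Combining this with density of $\bigoplus_{p\in\ZZ^n}A_p$ in $(\mathcal{A}_{\{1\};l,n-l})_\Theta$, which follows from Ces\`aro summation along the strongly continuous $\TT^n$-action on the \Calgebra\ $(\mathcal{A}_{\{1\};l,n-l})_\Theta$, we obtain $\overline{L}^{\,\|\cdot\|_\Theta}=(\mathcal{A}_{\{1\};l,n-l})_\Theta$, whence $B=(\mathcal{A}_{\{1\};l,n-l})_\Theta$.

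The main obstacle is bookkeeping rather than conceptual: one must track the sign conventions in the skew-symmetric matrix $\Theta$ separately in the cases $i<j$ and $i>j$ to check that the deformed relations emerge with precisely the desired unimodular prefactors $z_{ij}$ and $\overline{z}_{ij}$. Once this is settled, the universal property immediately produces $\Psi$, and the scalar nature of deformed products of homogeneous elements reduces surjectivity to a standard density statement in Rieffel's framework.
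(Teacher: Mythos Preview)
Your proof is correct and follows essentially the same approach as the paper's. For existence you verify the defining relations via \cref{prop:rieffelprop2.22} (you compute \cref{eq:automatic_relations_in_universal_algebra} explicitly whereas the paper computes \cref{eq:relations_in_universal_algebra} explicitly, but this is cosmetic), and for surjectivity both you and the paper project into the homogeneous components $A_p$, use that $\|\cdot\|$ and $\|\cdot\|_\Theta$ agree there, and invoke density of $\bigoplus_p A_p$ in the deformed algebra. One small slip: your isometry relation $s_i^{\prime\ast}\cdot_\Theta s_i^\prime=1$ is stated only for $i=1,\dotsc,l$, but it is needed for all $i=1,\dotsc,n$ (it is part of \cref{eq:unitaries_in_universal_algebra} as well); the same vanishing-diagonal argument covers this.
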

\begin{proof}
Note that $\Psi(s_i)$ belongs to the $e_i$-homogeneous component of $\mathcal{A}_{\{1\};l,n-l}$. Since $\langle \Theta (p), p \rangle = 0$ for any skew-symmetric matrix $\Theta$ and any $p \in \ZZ^n$, one has from \cref{prop:rieffelprop2.22} that 
\[
\Psi(s_i)^\ast\cdot_\Theta \Psi(s_i)=(s_i^\prime)^\ast s_i^\prime =  1
\]
for $i = 1,\dotsc, n$, and 
\[
\Psi(s_i)\cdot_\Theta \Psi(s_i)^\ast= s_i^\prime (s_i^\prime)^\ast =  1
\]
for $i = l + 1,\dotsc, n$. Hence $\Psi(s_i)$ is an isometry for $i=1,...,l$ and a unitary for $i=l+1,...,n$.

Furthermore, for $k,r$ with $1\leq k<r\leq n$, we have:
\begin{equation*}
    \begin{split}
        \Psi(s_k)^\ast \cdot_\Theta \Psi(s_r)&=e^{2\pi {\mathrm i}\langle \Theta (-e_k) \, ,\, e_r\rangle}
(s_k^\prime)^\ast s_r^\prime \\
 & = e^{-\pi {\mathrm i}\varphi_{kr} 
 }(s_k^\prime)^\ast s_r^\prime   \\ 
 &
=\overline{z}_{kr}\left(z_{kr}e^{-\pi {\mathrm i}\varphi_{kr}}\right)s_r^\prime (s_k^\prime)^\ast\\ 
& =
\overline{z}_{kr}\left(e^{2\pi {\mathrm i}\varphi_{kr}}e^{-\pi {\mathrm i}\varphi_{kr}}\right)s_r^\prime (s_k^\prime)^\ast\\ 
&
=\overline{z}_{kr}\left(e^{2\pi {\mathrm i}\langle \Theta(e_r)\, ,\, -e_k\rangle} s_r^\prime (s_k^\prime)^\ast\right)\\
&=\overline{z}_{kr}\Psi(s_r) \cdot_\Theta \Psi(s_k)^\ast.
    \end{split}
\end{equation*}
Taking adjoints, it follows from this that also $\Psi(s_k)^\ast \cdot_\Theta \Psi(s_r)=\overline{z}_{kr}\Psi(s_r) \cdot_\Theta \Psi(s_k)^\ast$ for $k,r$ with $1\leq r<k\leq n$. By the universal property of $\UAl$, the existence and uniqueness of $\Psi$ are now clear.

It remains to be shown that $\Psi$ is surjective. In view of \cref{eq:dense_span},  applied to  $(\mathcal{A}_{\{1\}; l,n-l})_\Theta$, it is sufficient to show that $((\mathcal{A}_{\{1\}; l,n-l})_\Theta)_p$ is contained in the image of $\Psi$ for $p\in\ZZ^n$. Take $p\in\ZZ^n$, $a\in((\mathcal{A}_{\{1\}; l,n-l})_\Theta)_p=(\mathcal{A}_{\{1\}; l,n-l})_p$, and a sequence $(a_k)$ in $\mathcal{A}^0_{\{1\}; l,n-l}$ such that $a_k\to a$ in the norm of $\mathcal{A}_{\{1\}; l,n-l}$. There exists a projection $P_p\colon \mathcal{A}_{\{1\}; l,n-l}\to (\mathcal{A}_{\{1\}; l,n-l})_p$ such that  
\[
P_p(a)=\int_{\TT^n}\overline{\chi_p}(t)\alpha_t(a)\,{\mathrm d}t
\]
for $a\in  \mathcal{A}_{\{1\}; l,n-l}$.
Hence $P_p(a_k)\to a$ in the norm of $\mathcal{A}_{\{1\}; l,n-l}$. As this convergence is in $(\mathcal{A}_{\{1\}; l,n-l})_p=((\mathcal{A}_{\{1\}; l,n-l})_\Theta)_p$, where the original and the deformed norm agree, we see that $P_p(a_k)\to a$ in $(\mathcal{A}_{\{1\}; l,n-l})_\Theta$. Since $P_p(a_k)\in\mathcal{A}^0_{\{1\}; l,n-l}$ which (in view of \cref{prop:rieffelprop2.22}) is clearly contained in the image of $\Psi$, we see that $a$ is also in this image.
\end{proof}

\begin{remark}
	It follows from \cite[Proposition~4.10]{rieffel:1993a} that a subset of $A_p$ which is dense in the original norm is still dense in the deformed norm. This is already sufficient to prove that $\Psi$ is surjective, bypassing the stronger statement that the norm on $A_p$ is actually unchanged.
\end{remark}

The $\ast$-homomorphism $\Psi$ in \cref{res:surjective_morphism} is also injective. We shall now prove this, by relating it to the standard representation $\strep$ of $\UAl$ on $H_{l,n-l}$ (which we know to be injective) via a representation of $(\mathcal{A}_{\{1\}; l,n-l})_\Theta$ on the same Hilbert space that can be obtained from \cref{res:representation_of_deformation}.

As a first preparation for this, define a representation $\rho$ of $\TT^n$ on $H_{l,n-l}$ by setting
\[
\rho_{(t_1,\dotsc,t_n)}\ep_{k_1,\dotsc,k_n}\coloneqq t_1^{k_1}\cdots t_n^{k_n}\ep_{k_1,\dotsc,k_n}.
\]
It is easily verified that $\strep^\prime(\alpha_t(s_i^\prime))=\rho_t\strep^\prime(s_i^\prime)\rho_t^\ast$ for $i=1,\dotsc,n$ and $t\in\TT^n$, implying that $\strep^\prime(\alpha_t(a))=\rho_t\strep^\prime(a)\rho_t^\ast$ for $a\in \mathcal{A}_{\{1\};l,n-l}$ and $t\in\TT^n$.
Hence \cref{res:representation_of_deformation} yields a representation $(\strep^\prime)_\Theta$ of $(\mathcal{A}_{\{1\}; l,n-l})_\Theta$ on $H_{l,n-l}$. 

As a second preparation, we define a unitary operator $T\colon H_{l,n-l}\to H_{l,n-l}$, as follows. For $i,j=1,\dotsc,n$, set
\begin{equation*}
	w_{i,j}\coloneqq
	\begin{cases}
		1&\text{ when }i=j;\\
		e^{\pi{\mathrm i}\varphi_{ij}}& \text{ when }i<j;\\
		e^{-\pi{\mathrm i}\varphi_{ji}} & \text{ when } i>j.
	\end{cases}
\end{equation*}
Then $w_{i,j}=\overline{w}_{j,i}$ for $i,j=1,\dotsc,n$ and $w_{i,j}^2=z_{i,j}$ when $i\neq j$. The diagonal unitary operator $T$ on $H_{l,n-l}$ is then defined by setting
\[
T\ep_{k_1,\dotsc,k_n}\coloneqq \prod_{p>q}w_{p,q}^{k_pk_q}\ep_{k_1,\dotsc,k_n}.
\]

The sought relation between the injective representation $\strep$ and the $\ast$-homo\-morph\-ism $\Psi$ is as follows.

\begin{proposition}\label{res:unitary_equivalence}
For $a\in\UAl$, we have
\begin{equation}\label{eq:relation_between_standardrepresentations}
\left[(\strep^\prime)_\Theta \circ \Psi\right](a) = T^\ast \strep(a) T.
\end{equation}

\end{proposition}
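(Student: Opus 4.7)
Both sides of \cref{eq:relation_between_standardrepresentations} are unital $\ast$-homomorphisms from $\UAl$ into $\bounded(\Hl)$: the left-hand side is the composition of two unital $\ast$-homomorphisms, and conjugation by the unitary $T$ is a $\ast$-homomorphism. By the universal property of $\UAl$, it therefore suffices to check the identity on each generator $s_i$, and for this it is enough to evaluate both sides on an arbitrary basis vector $\ep_{k_1,\dotsc,k_n}$ of $\Hl$.

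For the left-hand side, $\Psi(s_i)=s_i^\prime$ lies in the homogeneous component $(\mathcal{A}_{\{1\};l,n-l})_{e_i}$ of the $\TT^n$-action $\alpha$, while $\ep_{k_1,\dotsc,k_n}$ is the $\chi_q$-eigenvector of $\rho$ for $q=(k_1,\dotsc,k_n)$. Since the structure constants of the primed algebra are all equal to $1$, \cref{res:representation_of_deformation} collapses the left-hand side to $e^{2\pi{\mathrm i}\langle\Theta(e_i),q\rangle}\,\ep_{k_1,\dotsc,k_i+1,\dotsc,k_n}$, and reading off $\Theta$ from \cref{theta_q} this scalar simplifies to $\prod_{j<i}e^{-\pi{\mathrm i}\varphi_{ji}k_j}\prod_{j>i}e^{\pi{\mathrm i}\varphi_{ij}k_j}$.

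For the right-hand side, setting $k^\prime=(k_1,\dotsc,k_i+1,\dotsc,k_n)$ and $\tau(k)\coloneqq\prod_{p>q}w_{p,q}^{k_p k_q}$, a direct unravelling of the definitions of $T$ and $\strep(s_i)$ gives
\[T^\ast\strep(s_i)T\,\ep_{k_1,\dotsc,k_n}=\frac{\tau(k)}{\tau(k^\prime)}\,\prod_{j<i}z_{i,j}^{k_j}\,\ep_{k_1,\dotsc,k_i+1,\dotsc,k_n}.\]
Only the pairs $(p,q)$ with $p>q$ and $i\in\{p,q\}$ contribute to the ratio $\tau(k)/\tau(k^\prime)$. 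Combining the resulting product with $\prod_{j<i}z_{i,j}^{k_j}=\prod_{j<i}e^{-2\pi{\mathrm i}\varphi_{ji}k_j}$ (which uses $z_{ij}=\overline{z}_{ji}$) and $w_{p,q}^2=z_{p,q}$ for $p>q$, one finds that the scalar coefficient collapses to exactly the expression obtained on the left.

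The proof is therefore entirely mechanical, and the only real difficulty is organisational: three sign conventions must be tracked in parallel --- the upper-triangular minus in $\Theta$ from \cref{theta_q}, the choice $w_{p,q}^2=z_{p,q}$ of a square root of the structure constants used in defining $T$, and the asymmetric shape of \cref{eq:general_isometries}, in which only the constants $z_{i,j}$ with $j<i$ appear. The factor $\frac{1}{2}$ in $\Theta$ and the fact that $T$ is a diagonal \emph{square-root} twist are calibrated precisely so that the Rieffel phase from $\Theta$ on the left matches the combined twist-and-shift scalar on the right.
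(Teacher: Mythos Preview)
Your proof is correct and follows essentially the same approach as the paper: reduce to generators, evaluate both sides on a basis vector $\ep_{k_1,\dotsc,k_n}$, and match the resulting unimodular scalars. The only cosmetic difference is that the paper verifies the intertwining relation in the equivalent form $T\,[(\strep^\prime)_\Theta\circ\Psi](s_i)=\strep(s_i)\,T$ and writes out the scalar manipulation in full, whereas you conjugate directly and package the diagonal entries of $T$ into $\tau(k)$; the computational content is identical.
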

\begin{proof}
It is sufficient to prove \cref{eq:relation_between_standardrepresentations} when $a$ is one of the generators $s_1,\dotsc,s_n$ of $\UAl$.

Note that $\CC\cdot \ep_{k_1 \dotsc k_n}$ is the $(k_1, \dotsc, k_n)$-homogeneous component of $H_{l,n-l}$ for the representation $\rho$ of $\TT^n$ on this space. Using \cref{res:representation_of_deformation}, we then have that

\begin{equation*}
    \begin{split}
    \big[(\strep^\prime)_\Theta \circ  \Psi\big](s_i) &\ep_{k_1, \dotsc, k_n} = \left[(\strep^\prime)_\Theta (s_i^\prime)\right] \ep_{k_1, \dotsc, k_n} \\
     &= e^{2 \pi {\mathrm i}\langle \Theta (e_i)\,,\, (k_1, \dotsc, k_n) \rangle}  \ep_{k_1,\dotsc,k_{i-1}, k_i+1,k_{i+1},\dotsc,k_n}  \\ 
     & = e^{\pi {\mathrm i} (-k_1 \varphi_{1i} -\, \dotsb\, - k_{i-1} \varphi_{i-1,i} + k_{i+1} \varphi_{i,i+1}+\,\dotsb\,+k_n \varphi_{in})} \ep_{k_1,\dotsc,k_{i-1}, k_i+1,k_{i+1},\dotsc,k_n}  \\ 
     & = w_{i,1}^{k_1} \dotsb w_{i,n}^{k_n} \ep_{k_1,\dotsc,k_{i-1}, k_i+1,k_{i+1},\dotsc,k_n}.
    \end{split}
\end{equation*}

Then
\begin{equation*}
    \begin{split}
   T&\big[\big[(\strep^\prime)_\Theta \circ  \Psi\big](s_i) \ep_{k_1, \dotsc, k_n}\big]
  =w_{i,1}^{k_1} \dotsb w_{i,n}^{k_n} T \ep_{k_1,\dotsc,k_{i-1},k_i+1,k_{i+1}\dotsc,k_n}\\
   &= w_{i,1}^{k_1} \dotsb w_{i,n}^{k_n}\prod_{{\begin{scriptsize}\begin{matrix} p>q\\ p,q\not= i\end{matrix}\end{scriptsize}}}w_{p,q}^{k_p k_q} \prod_{q<i} w_{i,q}^{(k_i+1) k_q}
\prod_{p>i}w_{p,i}^{k_p (k_i+1)}   \ep_{k_1,\dotsc,k_{i-1},k_i+1,k_{i+1},\dotsc,k_n}\\ 
&= z_{i,1}^{k_1} \dotsb z_{i,i-1}^{k_{i-1}} \prod_{p > q} w_{p,q}^{k_p k_q}\ep_{k_1,\dotsc,k_{i-1},k_i+1,k_{i+1},\dotsc,k_n}\\
&=
\strep(s_i) (T \ep_{k_1,\dotsc,k_n}).
    \end{split}
\end{equation*}
Here the third equality follows by bringing the first part of  
\[
w_{i,1}^{k_1} \dotsb w_{i,n}^{k_n}=w_{i,1}^{k_1}\dotsb w_{i,i-1}^{k_{i-1}}\,\cdot\, w_{i,i+1}^{k_{i+1}}\dotsb w_{i,n}^{k_n}
\]
into the second product, the second part of it into the third product, and then using that 
\begin{align*}
w_{i,q}^{k_q}  w_{i,q}^{(k_i+1) k_q}&= w_{i,q}^{2k_q+k_ik_q}=z_{i,q}^{k_q} w_{i,q}^{k_i k_r}
\intertext{for $q<i$, and that}
w_{i,p}^{k_p} w_{p,i}^{k_p (k_i+1)}&= w_{p,i}^{-k_p} w_{p,i}^{k_p (k_i+1)}=w_{p,i}^{k_p k_i}
\end{align*}
for $p>i$.

We conclude that
\[
T\, \big[(\strep^\prime)_\Theta \circ  \Psi\big](s_i)=\strep(s_i)\,T.
\]
As $T$ is unitary, this implies the validity of \cref{eq:relation_between_standardrepresentations} for the generators $s_1,\dotsc,s_n$ of $\UAl$, as desired.

\end{proof}
Since $\strep$ is injective by  \cref{res:standard_representation_is_injective}, \cref{res:unitary_equivalence} implies that  $\Psi$ is likewise injective. Combining this with its surjectivity from \cref{res:surjective_morphism} yields the following.

\begin{theorem}\label{corRiefD}
The $C^\ast$-algebras $\UAl$ and $(\mathcal{A}_{\{1\};l,n-l})_\Theta$ are isomorphic via $\Psi$.
\end{theorem}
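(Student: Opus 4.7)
The plan is essentially to combine the two earlier results already set up in this section. From \cref{res:surjective_morphism} we already know that $\Psi:\UAl\to(\mathcal{A}_{\{1\};l,n-l})_\Theta$ exists and is surjective, so only injectivity of $\Psi$ remains.

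First I would read off injectivity of $\Psi$ directly from \cref{res:unitary_equivalence}. Suppose $a\in\UAl$ satisfies $\Psi(a)=0$. Then the identity $[(\strep^\prime)_\Theta\circ\Psi](a)=T^\ast\strep(a)T$ forces $T^\ast\strep(a)T=0$, and since $T$ is unitary this gives $\strep(a)=0$. By \cref{res:standard_representation_is_injective} the standard representation $\strep$ of $\UAl$ is injective, hence $a=0$. Thus $\Psi$ is injective.

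Second, combining this with the surjectivity established in \cref{res:surjective_morphism}, $\Psi$ is a bijective unital $\ast$-homomorphism between $\mathrm{C}^\ast$-algebras, and hence an isometric $\ast$-isomorphism. This is exactly the statement of \cref{corRiefD}.

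There is essentially no obstacle, since the work has been loaded into the two preceding propositions: the Rieffel-deformation side enters through the construction of $\Psi$ and its surjectivity (where \cref{prop:rieffelprop2.22} and the invariance of the norm on homogeneous components are used), while the hard input on the $\UAl$ side is the injectivity of $\strep$ proved in \cref{sec:the_standard_representation_is_injective}. The factorisation $(\strep^\prime)_\Theta\circ\Psi=\mathrm{Ad}(T^\ast)\circ\strep$ in \cref{res:unitary_equivalence} is precisely the bridge between these two ingredients, and once it is invoked the proof reduces to a one-line diagram chase.
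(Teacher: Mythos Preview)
Your proposal is correct and follows exactly the paper's own argument: the paper derives injectivity of $\Psi$ from \cref{res:unitary_equivalence} together with the injectivity of $\strep$ (\cref{res:standard_representation_is_injective}), and then combines this with the surjectivity from \cref{res:surjective_morphism}.
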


It is well known (see also \cref{rem:special_cases}) that the universal unital \Calgebra\ generated by one isometry is the Toeplitz algebra $\mathcal T$; for a unitary it is $\cont(\TT)$. It is then easy to see that the unital tensor product $\mathcal{T}^{\otimes l} \otimes \cont(\TT)^{\otimes (n-l)}$ of these nuclear algebras has all the properties required in \cref{def:universal_algebra} when all $z_{ij}$ are equal to 1. Hence it is (isomorphic to) our ${\mathcal A}_{\{1\};l,n-l}$.  

We can now provide an alternate proof of the following special case of \cite[Theorem~6.2]{rakshit_sarkar_suryawanshi:2022}\
\begin{proposition}
The \Calgebra\ ${\UAl}$ is nuclear.
\end{proposition}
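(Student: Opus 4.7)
The plan is to exploit the isomorphism $\UAl\cong (\mathcal{A}_{\{1\};l,n-l})_\Theta$ provided by \cref{corRiefD}, so it suffices to show that the right-hand side is nuclear. I would proceed in three short steps.

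First, identify the undeformed algebra. The relations defining $\mathcal{A}_{\{1\};l,n-l}$ (all $z_{ij}=1$) say exactly that $s_1,\dotsc,s_l$ are mutually commuting isometries with mutually commuting adjoints, that $s_{l+1},\dotsc,s_n$ are mutually commuting unitaries, and that the two families commute. The universal property, together with the well-known descriptions of the universal \Calgebra\ generated by a single isometry (the Toeplitz algebra $\mathcal T$) and by a single unitary ($\cont(\TT)$), then gives a canonical unital $\ast$-homomorphism $\mathcal{T}^{\otimes l}\otimes\cont(\TT)^{\otimes(n-l)}\to \mathcal{A}_{\{1\};l,n-l}$ sending the $i$-th tensorial generator to $s_i$. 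The remark immediately preceding the proposition already records that this map is an isomorphism (the tensor product satisfies the universal property). Since $\mathcal T$ and $\cont(\TT)$ are nuclear, and nuclearity is preserved under minimal tensor products of \Calgebras, the algebra $\mathcal{A}_{\{1\};l,n-l}$ is nuclear.

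Second, invoke the fact that Rieffel deformation preserves nuclearity. This is a standard consequence of the deformation theory of \cite{rieffel:1993a}: the deformed algebra $A_\Theta$ is Morita equivalent (via the smooth crossed product machinery) to a corner of $A\rtimes_\alpha \RR^n$, and since $\RR^n$ is amenable, crossed products by $\RR^n$ preserve nuclearity; Morita equivalence does as well. Applied to $A=\mathcal{A}_{\{1\};l,n-l}$, this yields that $(\mathcal{A}_{\{1\};l,n-l})_\Theta$ is nuclear.

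Third, combine these two steps with \cref{corRiefD} to conclude that $\UAl$ is nuclear.

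The main (and essentially only) obstacle is the second step: one needs a clean reference for the fact that Rieffel deformation preserves nuclearity. If one prefers to avoid invoking the Morita/crossed product picture, an alternative is to note that the \Cstar-algebraic decomposition \cref{eq:dense_span} and the formula in \cref{prop:rieffelprop2.22} show that $A_\Theta$ and $A$ share the same $\ZZ^n$-graded $\ast$-algebraic skeleton, from which one can transfer a completely positive approximation of the identity on $A$ (compressed to spectral subspaces and pieced back together) to one on $A_\Theta$; but citing the standard preservation result under Rieffel deformation is by far the most economical route.
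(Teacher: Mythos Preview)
Your proof is correct and follows essentially the same route as the paper: invoke \cref{corRiefD}, identify $\mathcal{A}_{\{1\};l,n-l}$ with the nuclear tensor product $\mathcal{T}^{\otimes l}\otimes\cont(\TT)^{\otimes(n-l)}$, and use that Rieffel deformation preserves nuclearity. The only difference is cosmetic: the paper simply cites \cite{rieffel:1993b} for the last step, whereas you sketch the Morita/crossed-product justification.
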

\begin{proof}
By \cite{rieffel:1993b}, a deformed \Calgebra\ $A_\Theta$ is nuclear if $A$ is. So the result follows from \cref{corRiefD} and the nuclearity of $\mathcal{T}^{\otimes l} \otimes \cont(\TT)^{\otimes (n-l)}$.
\end{proof}

Using the Pimsner-Voiculescu six term exact sequence associated with a crossed product by $\ZZ$, the K-groups of $\UAl$, together with their generators, are computed in \cite[Theorems~3.5 and~3.6]{bhatt_saurabh_UNPUBLISHED:2023}. Our approach via Rieffel's deformation theory provides an alternate way to compute these groups.

\begin{theorem}\label{res:K-groups}
The K-groups of $\UAl$ are as follows:

$\bullet$\ for $n=l$ with $l\geq 1$, we have
                $K_0(\mathcal A_{\{z_{ij}\}; l,0})\simeq\ZZ$ and $K_1(\mathcal A_{\{z_{ij}\};l,0})\simeq 0$;

$\bullet$\ for $n>l$ with $l\geq 0$,  we have
$K_0(\mathcal A_{\{z_{ij}\}; l,n-l})  \simeq  K_1(\mathcal A_{\{z_{ij}\};l,n-l})\simeq \ZZ^{2^{n-l-1}}$.
\end{theorem}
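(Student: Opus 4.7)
The plan is to leverage \cref{corRiefD} to reduce the computation to the undeformed algebra $\mathcal{A}_{\{1\};l,n-l}$, and then exploit the tensor product description of this algebra to invoke the Künneth theorem. The first ingredient is the invariance of K-theory under Rieffel deformation: the deformed algebra $A_\Theta$ and the original algebra $A$ have naturally isomorphic K-groups (this is one of the main results of \cite{rieffel:1993a}, see in particular the K-theoretic consequences worked out there and subsequently by Rieffel and others). Applying this to \cref{corRiefD} yields
\[
K_\ast(\UAl)\cong K_\ast((\mathcal{A}_{\{1\};l,n-l})_\Theta)\cong K_\ast(\mathcal{A}_{\{1\};l,n-l}).
\]

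Next, as observed in the paragraph preceding the theorem, $\mathcal{A}_{\{1\};l,n-l}$ is isomorphic to the nuclear $C^\ast$-algebra
\[
\mathcal{T}^{\otimes l}\otimes \cont(\TT)^{\otimes(n-l)},
\]
where $\mathcal{T}$ is the Toeplitz algebra. Recall the well-known K-groups $K_0(\mathcal{T})\cong\ZZ$, $K_1(\mathcal{T})=0$ (in fact $\mathcal{T}$ is KK-equivalent to $\CC$), and $K_0(\cont(\TT))\cong K_1(\cont(\TT))\cong\ZZ$. Since all algebras involved are nuclear (in particular, they belong to the bootstrap class), the Künneth theorem for tensor products of $C^\ast$-algebras applies without Tor-terms.

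For the first case, $n=l\geq 1$, iterated application of the Künneth theorem to $\mathcal{T}^{\otimes l}$ gives $K_0(\mathcal{T}^{\otimes l})\cong\ZZ$ and $K_1(\mathcal{T}^{\otimes l})=0$, as the $K_1$-factor is always trivial. For the second case, $n>l\geq 0$, the Toeplitz factors do not contribute beyond a copy of $\ZZ$ in degree zero, so
\[
K_\ast(\mathcal{T}^{\otimes l}\otimes \cont(\TT)^{\otimes(n-l)})\cong K_\ast(\cont(\TT^{n-l})),
\]
and iterated application of the Künneth theorem to $\cont(\TT^{n-l})\cong \cont(\TT)^{\otimes(n-l)}$ yields the classical computation $K_0(\cont(\TT^{n-l}))\cong K_1(\cont(\TT^{n-l}))\cong\ZZ^{2^{n-l-1}}$, which is the desired conclusion.

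The proof is largely bookkeeping once \cref{corRiefD} is in hand; the only subtlety is ensuring that the deformation invariance of K-theory is applied correctly to the periodic $\RR^n$-action (equivalently $\TT^n$-action) used in \cref{sec:deformation}. This is standard since Rieffel's deformation fits into a continuous field over the space of deformation parameters, and K-theory is constant along such fields; one may equivalently invoke that the Toeplitz extension and the Pimsner--Voiculescu sequence behave as expected for deformed crossed products, recovering the same answer as the direct computation in \cite{bhatt_saurabh_UNPUBLISHED:2023}.
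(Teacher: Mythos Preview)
Your proof is correct and follows essentially the same approach as the paper: both reduce via \cref{corRiefD} and the invariance of K-theory under Rieffel deformation to computing $K_\ast(\mathcal{T}^{\otimes l}\otimes\cont(\TT)^{\otimes(n-l)})$, and then apply the K\"unneth theorem using the known torsion-free K-groups of $\mathcal{T}$ and $\cont(\TT)$. The only minor point is that the K-theoretic invariance under deformation is established in \cite{rieffel:1993b} rather than \cite{rieffel:1993a}.
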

\begin{proof}
By \cite{rieffel:1993b}, the K-groups of a \Calgebra\ $A$ and its deformation $A_\Theta$ are isomorphic. Hence  \cref{corRiefD} implies that $K_i(\mathcal A_{\{z_{ij}\};l,n-l})\simeq K_i(\mathcal{T}^{\otimes l} \otimes \cont(\TT)^{\otimes (n-l)})$ for $i=0,1$. The K-groups of $\mathcal{T}^{\otimes l}\otimes \cont(\TT)^{\otimes (n-l)}$ are easily calculated. Indeed, on recalling that $K_0(\mathcal{T})\simeq\ZZ$, that $K_1(\mathcal{T})=0$, and that $K_0(\cont(\TT))\simeq\ZZ\simeq K_1(\cont(\TT))$, which are all torsion free groups, a repeated use of the K\"{u}nneth theorem  (see, e.g.,  \cite[p.171]{wegge-olsen_K-THEORY_AND_C-STAR-ALGEBRAS:1993}) readily leads to the results.
\end{proof}





\subsection*{Acknowledgements} The results in this paper were partly obtained during a research visit of the first author to the University of Lisbon. The kind hospitality of the Instituto Superior T\'{e}cnico is gratefully acknowledged. Pinto was partially funded by FCT/Portugal through project UIDB/04459/2020 with DOI identifier 10-54499/UIDP/04459/2020.
The authors thank Lyudmila Turowska for helpful discussions.





\bibliographystyle{amsplain}
\urlstyle{same}
\bibliography{general_bibliography}

\end{document}